\newtheorem{prop}{Proposition}[section]
\newtheorem{thm}[prop]{Theorem}
\newtheorem{lem}[prop]{Lemma}
\newtheorem{conj}[prop]{Conjecture}
\title{A Free Boundary Isometric Embedding Problem in the Unit Ball}
\author{Thomas Koerber}
\address{ University of Vienna,
	Oskar-Morgenstern-Platz 1,
	1090 Vienna,
	Austria}
\email{thomas.koerber@univie.ac.at}
\begin{document}
	\begin{abstract}
		We study a free boundary isometric embedding problem for abstract Riemannian two-manifolds with the topology of the disc. Under the assumption of positive Gauss curvature and geodesic curvature of the boundary being equal to one, we show that every such disc may be isometrically embedded into the Euclidean three-space $\mathbb{R}^3$ such that the image of the boundary meets the unit sphere $\mathbb{S}^2$ orthogonally. We also show that the embedding is unique up to rotations and reflections through planes containing the origin. Finally, we 		
		define a new Brown-York type quasi-local mass for certain free boundary surfaces and discuss its positivity.
	\end{abstract}
	\maketitle
	
	\section{Introduction}
	It is a fundamental problem in differential geometry to understand which abstract Riemannian manifolds can be realised as embedded submanifolds of a Euclidean space. In a seminal work, J. Nash showed that every sufficiently smooth Riemannian manifold can be isometrically embedded in a higher dimensional Euclidean space, see \cite{nash1956imbedding}. A similar result was later  obtained by M. Günther, see \cite{gunther1991isometric}. While these results are of broad generality, they give little information about the dimension of the ambient Euclidean space and the extrinsic geometry of the embedded manifold. \\ \indent By contrast, stronger results can be obtained in more restrictive settings. In 1916, H. Weyl conjectured that every sufficiently smooth Riemannian metric $h$ defined on the unit sphere $\mathbb{S}^2$ with positive Gauss curvature $K_h$ may be realised as a convex surface in $\mathbb{R}^3$. This problem, which is now known as the Weyl problem, was solved by H. Lewy in 1938 if $h$ is analytic, see \cite{lewy1938existence}, and in a landmark paper by L. Nirenberg if $h$ is of class $C^4$, see \cite{nirenberg1953weyl}.
	\\ \indent  As had been proposed by H. Weyl, L. Nirenberg used the continuity method in his proof. Namely, he constructed a smooth one-parameter family of positive curvature metrics $h_t,$ where $t\in[0,1],$ such that $h_1$ equals $h$ and $h_0$ is the round metric. The round metric is realised by the round sphere $\mathbb{S}^2\subset \mathbb{R}^3$ and it thus suffices to show that the set of points $t$ for which the Weyl problem can be solved is open and closed in $[0,1]$. In order to show that this set is open, L. Nirenberg used a fix point argument which is based on proving existence and estimates for a linearised equation. To show that the set is closed, he established a global $C^2$-estimate for solutions of fully non-linear equations of Monge-Ampere type. Here, the positivity of the Gauss curvature translates into ellipticity of the equation. We remark that a similar result was independently obtained by A. Pogorelov using different techniques, see \cite{pogorelov1973extrinsic}. \\ \indent
	L. Nirenberg's result has been generalized subsequently in various ways. In the degenerate case $K_h\geq 0$, P. Guan and Y. Li as well as J. Hong and C. Zuily, showed that there exists a $C^{1,1}$-embedding into the Euclidean space, see \cite{guan1994weyl,hong1995isometric}. Regarding the regularity required on the metric $h$, E. Heinz established an interor $C^2$-estimate which allowed him to relax the regularity assumption in \cite{nirenberg1953weyl} to $h$ being of class $C^3$, see \cite{heinz1959elliptic,heinz1962weyl}. Using similar techniques, F. Schulz further weakened the assumption to $h$ being of class $ C^{2,\alpha}$ for some $\alpha\in(0,1)$, see \cite{schulz2006regularity}.  We note that the isometric embedding problem for more general target manifolds, particularly with warped product metrics, has been studied. We refer to the works of P. Guan and S. Lu, see \cite{guan2017curvature}, S. Lu, see \cite{lu2016weyl}, as well as C. Li and Z. Wang, see \cite{li2016weyl}.  \\
	\indent A natural extension of the Weyl problem consists in the consideration of isometric embedding problems for manifolds with boundary. In \cite{hong1999darboux}, J. Hong considered Riemannian metrics on the disc $(D,h)$ with both positive Gauss curvature $K_h$ and positive geodesic curvature of the boundary $k_h$ and showed that $(D,h)$ can be isometrically embedded into $\mathbb{R}^3$ such that the image of the boundary is contained in a half space. In \cite{guan2007isometric}, B. Guan studied a similar embedding problem in the Minkowski space. These boundary value problems are reminiscent of the classical free boundary problem for minimal surfaces, see for instance \cite{nitsche1985stationary}. However, in the case of minimal surfaces, the variational principle forces the contact angle to be $\pi/2$. By contrast, there is no additional information in \cite{hong1999darboux} about the contact angle between the embedding of $\partial D$ and the supporting half-space. In order to make this distinction precise, we call surfaces that meet a supporting surface at a contact angle of $\pi/2$ free boundary surfaces (with respect to the supporting surface). Moreover, we call geometric boundary problems which require the solution to be a free boundary surface free boundary problems. 
	\\ \indent If the supporting surface is a half-space, free boundary problems can often be solved using a reflection argument. Consequently, it is more interesting to consider more general supporting surfaces, the most simple non-trivial example being the unit sphere. In recent years, there has been considerable activity in the study of free boundary problems with respect to the unit sphere. For example, A. Fraser and R. Schoen studied free boundary minimal surfaces in the unit ball, see \cite{fraser2011first,fraser2012minimal,fraser2016sharp}. B. Lambert and J. Scheuer studied the free boundary inverse mean curvature flow and derived a geometric inequality for convex free boundary surfaces, see \cite{lambert2016inverse,lambert2017geometric} and also \cite{volkmann2014monotonicity}. G. Wang and C. Xia proved the uniqueness of stable capillary surfaces in the unit ball, see \cite{wang2019uniqueness}, while J.  Scheuer, G. Wang and C. Xia proved certain Alexandrov-Fenchel type inequalities, see \cite{scheuer2018alexandrov}. \\ \indent 
	In this paper, we study a free boundary isometric embedding problem with respect to the unit sphere. More precisely, we prove the following theorem.
	\begin{thm}
		Let $k\geq 4$, $\alpha\in(0,1)$ and $h\in C^{k,\alpha}(\bar{D},\operatorname{Sym}(\mathbb{R}^2))$  be a Riemannian metric on the closed unit disc $\bar D$ with positive Gauss curvature $K_h$ and geodesic curvature $k_h$ along $\partial D$ equal to one. Then there exists an isometric embedding $$F:\bar{D}\to\{x\in\mathbb{R}^3:|x|\leq 1\}$$ of class $C^{k+1,\alpha}$ such that $F(\partial D)\subset \mathbb{S}^2$ and $F(\bar{D})$ meets $\mathbb{S}^2$ orthogonally along $F(\partial D)$. This embedding is unique up to rotations and reflections through planes that contain the origin.
		\label{main theorem fbie}
	\end{thm}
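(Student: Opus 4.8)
The plan is to follow the continuity method of Weyl, Nirenberg and Hong, adapted to the free boundary condition, after first translating the problem into the structure equations of surface theory. The key observation is that $k_h=1$ is exactly the compatibility condition for the problem: if $F$ is any free boundary isometric embedding then, along $\partial D$, orthogonal contact forces the tangent plane of $F(D)$ to be spanned by the unit tangent $T$ of the boundary curve and by the position vector $F$, so $F$ equals the outward unit conormal $\mathrm{d}F(\eta)$ of $F(D)$, and differentiating $|F|\equiv 1$ then shows that the geodesic curvature of $F(\partial D)$ inside $F(D)$ must be $1$. Conversely, I would reduce the existence problem to finding a positive definite symmetric two-tensor $A$ on $(\overline D,h)$ solving the Gauss equation $\det_h A=K_h$ and the Codazzi equations $\mathrm{d}^\nabla A=0$ with the boundary condition $A(T,\eta)=0$ on $\partial D$, i.e.\ $\partial D$ is a line of curvature of the immersed surface. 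Integrating the Codazzi system to a scalar potential on the simply connected disc turns the Gauss equation into a Monge--Amp\`{e}re type equation and turns $A(T,\eta)=0$ into a regular oblique boundary condition. Given such an $A$, the fundamental theorem of surface theory yields an isometric immersion $F$, unique up to a Euclidean motion; using $A(T,\eta)=0$ and $k_h=1$ one checks that $\mathrm{d}F(\eta)-F$ is constant along $\partial D$, so after a unique translation $\mathrm{d}F(\eta)=F$ holds there, whence $|F|$ is constant and $\langle\nu_F,F\rangle=0$ along $\partial D$, and the constant is $1$ because $k_h=1$. Thus the free boundary problem is equivalent to this fully nonlinear boundary value problem for $A$, with residual gauge freedom exactly the group $O(3)$ of rotations and reflections fixing the origin.

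For the continuity argument I would fix a smooth path $h_t$, $t\in[0,1]$, in the space of metrics on $\overline D$ with positive Gauss curvature and geodesic boundary curvature $\equiv 1$, joining $h_1=h$ to the round metric $h_0$ of a spherical cap meeting $\mathbb{S}^2$ orthogonally (whose embedding is the cap itself), and let $\mathcal I\subset[0,1]$ be the set of $t$ for which the reduced problem admits a $C^{k,\alpha}$ solution. Existence of such a path is standard: using the Gauss--Bonnet identity $\int_D K_h\,dA_h+L_h(\partial D)=2\pi$, which holds automatically here, one first deforms $h$ conformally to a constant curvature metric with $k\equiv 1$ — necessarily a spherical cap — and then moves within the one-parameter family of such caps. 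The set $\mathcal I$ is nonempty by construction. For openness I would linearise the map $A\mapsto(\det_h A-K_h,\ \mathrm{d}^\nabla A,\ A(T,\eta)|_{\partial D})$; in the scalar potential formulation positivity of $K_h$ makes the linearised Monge--Amp\`{e}re equation uniformly elliptic with an oblique boundary condition, and I must verify the Lopatinski--Shapiro complementing condition and compute kernel and cokernel. The kernel consists of infinitesimal isometric deformations respecting the free boundary condition, and by a Cohn--Vossen type rigidity argument for a positively curved disc with a line-of-curvature boundary on $\mathbb{S}^2$ it reduces to the infinitesimal $O(3)$-action, with no cokernel obstruction modulo this finite-dimensional symmetry. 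Schauder estimates and the implicit function theorem, applied on the slice transverse to the $O(3)$-orbit, then give openness of $\mathcal I$.

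The main obstacle is closedness, i.e.\ the a priori estimates: from a solution $F_t$ for $t<t_*$ one must extract uniform $C^{k,\alpha}(\overline D)$ bounds in order to pass to the limit. The $C^0$ and $C^1$ bounds follow from Gauss--Bonnet, the fixed metric, and containment in $\overline{B^3(0)}$. The decisive estimate is a global two-sided bound on the principal curvatures: in the interior this is Nirenberg's $C^2$ estimate for Monge--Amp\`{e}re type equations, but here one needs the analogous estimate \emph{up to the free boundary}, exploiting the coupling of $A(T,\eta)=0$ with $\det_h A=K_h$ along $\partial D$. This is where the free boundary geometry genuinely enters, through the choice of auxiliary test quantities and barriers adapted to the line-of-curvature condition near $\partial D$, and I expect it to be the technically hardest part of the proof. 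Once a uniform curvature bound is available, uniform convexity persists, Evans--Krylov and Schauder estimates for the nonlinear equation with oblique boundary condition upgrade it to a $C^{2,\alpha}$ bound, and differentiating the equations and iterating Schauder estimates yields the full $C^{k,\alpha}$ bound; hence $\mathcal I$ is closed, so $\mathcal I=[0,1]$ and in particular $h$ is realised.

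Finally, for uniqueness I would prove the rigidity statement that if $A_1$ and $A_2$ both solve the reduced problem for the same $h$ then $A_1=A_2$; since the second fundamental form is a Euclidean invariant, this is equivalent to the asserted uniqueness of $F$ up to $O(3)$. I would argue by a Herglotz type integral identity for $A_1-A_2$ over $D$, of the kind used to prove rigidity of closed convex surfaces, keeping track of the boundary terms produced on $\partial D$ and controlling them by means of $A_i(T,\eta)=0$ and $k_h=1$; positivity of $K_h$ then forces $A_1-A_2$ to vanish. The bookkeeping of these boundary terms will be delicate but, in contrast to the a priori estimate, essentially algebraic.
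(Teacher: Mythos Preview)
Your overall strategy---solve first for the second fundamental form $A$ subject to Gauss, Codazzi, and the single linear boundary condition $A(T,\eta)=0$, then reconstruct $F$ via the fundamental theorem and translate---is a genuinely different route from the paper, and your key observation is correct: with $A(T,\eta)=0$ and $k_h\equiv 1$ one has $\tfrac{d}{ds}\bigl(\mathrm{d}F(\eta)-F\bigr)=\mathrm{d}F(\nabla^h_T\eta)-A(T,\eta)\nu-\mathrm{d}F(T)=0$, so after a unique translation $\mathrm{d}F(\eta)=F$ on $\partial D$, whence $|F|=|\eta|_h=1$ and $\nu\cdot F=0$ there. The paper instead works with $F$ throughout; the price is that the free boundary condition is nonlinear in $F$, which (as the paper says explicitly) blocks a direct fixed-point argument and forces a power-series construction, together with a long computation (its Lemma~3.6) showing that at every order the orthogonality condition is implied by the other two boundary conditions and $\partial_t k_{h_t}\equiv 0$. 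Your reduction makes that step algebraic and renders the boundary condition linear, so an implicit-function argument is more plausible in your formulation. One correction: in the $A$-problem there is \emph{no} $O(3)$ gauge---the second fundamental form is a Euclidean invariant---so the linearisation should have trivial kernel and no slice is needed; the $O(3)$ freedom appears only when you pass from $A$ back to $F$.

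There is, however, a genuine gap: your claim that the Codazzi system integrates to a \emph{scalar} potential on $(\overline D,h)$, turning Gauss into a Monge--Amp\`ere equation with an oblique condition, does not hold on a curved surface. For instance $A_{ij}=\nabla_i\nabla_j u$ is Codazzi only if $K_h\nabla u\equiv 0$, and the Darboux or support-function potentials in the literature are built from the embedding $F$, not from $A$ alone (the paper itself introduces $f=\tfrac12|F|^2$ only in the closedness step, where $F$ is already in hand). You should instead treat the $A$-problem as a determined first-order system: use the algebraic Gauss constraint to eliminate one component and analyse the remaining first-order elliptic system in two unknowns with the boundary condition $A(T,\eta)=0$, checking Lopatinski--Shapiro and Fredholm index---this is essentially the analysis the paper performs for the tangential components $(u_1,u_2)$ of the linearised embedding. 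For closedness the paper's decisive new input is exactly the boundary estimate you flag as hardest: along $\partial D$ the Codazzi equations together with $A_{r\varphi}=0$ force $\partial_\eta H\ge -\Lambda^2 H$, which excludes a boundary maximum of a suitably weighted $H$ and reduces matters to Heinz's interior estimate; your proposal does not yet contain this mechanism.
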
 
	One may check that the condition $k_h=1$ is necessary. However, we expect that the regularity assumption and the condition $K_h>0$ may be weakened in a similar way as for the classical Weyl problem. Before we give an overview of the proof, we provide some motivation to study this problem besides its intrinsic geometric interest. 
	\\ \indent In general relativity, the resolution of the classical Weyl problem is used to define the so-called Brown-York mass, see \cite{brown1992quasilocal}. Let $(M,g)$ be a compact Riemannian three-manifold with boundary $\partial M$ and assume that $\partial M$ is a convex sphere. J.  Nirenberg's theorem  implies that $\partial M$ may be isometrically embedded into $\mathbb{R}^3$. We denote the mean curvature of $\partial M$ as a subset of $\mathbb{R}^3$ by $\bar H$ and the mean curvature of $\partial M$ as a subset of $M$ by $H$. The Brown York mass is then defined to be
	$$
	m_{BY}(M)=\frac{1}{8\,\pi}\int_{\partial M} (\bar H-H)\,\text{d}vol_h
	$$
	where $h$ is the metric of $\partial M$. It was already proved by S. Cohn-Vossen in 1927 that an isometric embedding of a convex surface is unique up to rigid motions provided it exists, see \cite{cohn1927zwei}. It follows that the Brown-York mass is well-defined. 
	Under the assumption that $\partial M$ is strictly mean convex and that $(M,g)$ satisfies the dominant energy condition $R\geq 0$, where $R$ denotes the scalar curvature of $(M,g)$, Y. Shi and L.-F. Tam proved in \cite{shi2002positive} that the Brown-York mass is non-negative and  that equality holds precisely if $(M,g)$ is isometric to a smooth domain in $\mathbb{R}^3$. In fact, they proved that the positive mass theorem for asymptotically flat manifolds  is equivalent to the positivity of the Brown-York mass of every compact and convex domain. A weaker inequality, which still implies the positive mass theorem, was proven by O. Hijazi and S. Montiel using spinorial methods, see \cite{hijazi2014holographic}. We would also like to mention that C.-C. Liu and S.-T. Yau introduced a quasi-local mass in the space-time case and proved positivity thereof, see \cite{liu2003positivity,liu2006positivity}. Their mass was later on generalized and further studied by M.-T. Wang and S.-T. Yau, see \cite{wang2006generalization,wang2009isometric}. Moreover, S. Lu and P. Miao derived a quasi-local mass type inequality in the Schwarzschild space, see \cite{lu2017minimal}. \\ \indent 
	In the setting of free boundary surfaces, we instead consider a compact three-manifold $(M,g)$ with a non-smooth boundary $\partial M=S\cup \Sigma$. Here, $S$ and $\Sigma$ are compact smooth surfaces meeting orthogonally along their common boundary $\partial \Sigma=\partial S$. We assume that $\partial \Sigma$ is strictly mean convex, has positive Gauss curvature and geodesic curvature along the boundary equal to one. The latter requirement is for instance satisfied if $S$ is a totally umbilical surface with mean curvature $H(S)$ equal to two. We may then isometrically embed $\Sigma$ into $\mathbb{R}^3$ with free boundary in the unit sphere. It is natural to ask if we can expect a geometric inequality in the spirit of \cite{shi2002positive} to hold on the free boundary surface $\Sigma$. In Appendix \ref{schwarzschild}, we provide some numerical evidence that this might be indeed the case. The condition $H(S)\geq 2$ in the following conjecture is natural in the context of positive mass type theorems, see for instance \cite{MR1982695}.
	\begin{conj}
		Let $(M,g)$ be a manifold with boundary $\partial M=S\cup \Sigma$ where $S$ and $\Sigma$ are smooth discs meeting orthogonally along $\partial \Sigma=\partial S$. Assume that $\Sigma$ is strictly mean convex, has positive Gauss curvature as well as geodesic curvature along $\partial \Sigma$ equal to one and that $R\geq 0$ on $M$ and $H(S)\geq 2$ on $S$. Then there holds
		$$
		\int_{\Sigma} (\bar H-H)\,\text{d}vol_{h}\geq 0.
		$$
		Equality holds if and only if $(M,g)$ is isometric to a domain in
		\label{conjecture} $\mathbb{R}^3$ and if the isometry maps $S$ to a subset of $\mathbb{S}^2$.
	\end{conj}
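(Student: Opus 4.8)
The natural line of attack is a free boundary version of the Shi--Tam argument \cite{shi2002positive} combined with a positive mass theorem for asymptotically flat manifolds with boundary. \emph{Step 1 (embedding and exterior extension).} Using Theorem \ref{main theorem fbie}, isometrically embed $\Sigma$ into $\overline{B^3(0)}$ with free boundary in $\mathbb{S}^2$; write $\Sigma_0$ for the image and $\gamma=\partial\Sigma_0\subset\mathbb{S}^2$. The disc $\Sigma_0$ divides $B^3(0)$ into two regions; let $\Omega$ be the one that will be replaced by $(M,g)$ and let $C=\overline\Omega\cap\mathbb{S}^2$ be the corresponding spherical cap, which meets $\Sigma_0$ orthogonally along $\gamma$ by the free boundary condition. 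On the unbounded region $E:=\mathbb{R}^3\setminus\overline\Omega$, with $\partial E=\Sigma_0\cup C$, one constructs a quasi-spherical Shi--Tam metric $g_+$: choose a foliation of $E$ whose leaves are free boundary surfaces close to $\Sigma_0$ and become round spheres near infinity, set $g_+=u^2\,\mathrm dr^2+g_r$ with $g_r$ the metric induced on the leaf $\Sigma_r$ by the Euclidean background, and let $u=u(r,\cdot)$ solve the parabolic prescribed-scalar-curvature equation $\operatorname{Sc}(g_+)=0$ with initial value $u|_{\Sigma_0}=H_e/H^\Sigma$ and with boundary data along $C$ chosen so that $C$ carries exactly the metric and second fundamental form of a cap of the round unit sphere $\mathbb{S}^2\subset\mathbb{R}^3$. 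As in \cite{shi2002positive} this should yield an asymptotically flat $(E,g_+)$, arrange that the mean curvature of $\Sigma_0$ inside $(E,g_+)$ equals $H^\Sigma$, and produce a Shi--Tam--Hawking quantity $m(r)$ with $m(0)=\frac1{8\pi}\int_\Sigma(H_e-H^\Sigma)\,\mathrm dvol_{h_\Sigma}$, $\lim_{r\to\infty}m(r)=m_{\mathrm{ADM}}(g_+)$, that is monotone non-increasing in $r$.

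\emph{Step 2 (gluing and the positive mass theorem).} Glue $(M,g)$ to $(E,g_+)$ along $\Sigma\cong\Sigma_0$: the metrics match because the embedding is isometric, the mean curvatures match by construction, and $\operatorname{Sc}\ge0$ holds on both sides, so $(\widehat M,\widehat g):=(M,g)\cup_\Sigma(E,g_+)$ is asymptotically flat with $\operatorname{Sc}_{\widehat g}\ge0$ in the distributional sense and with compact boundary $\partial\widehat M=S\cup C$. Since $S$ meets $\Sigma$ orthogonally in $M$ and $C$ meets $\Sigma_0$ orthogonally in $E$, the tangent planes of $S$ and $C$ agree along $\gamma$, so $\partial\widehat M$ is a $C^1$ topological two-sphere; along $C$ its metric and mean curvature are those of a cap of $\mathbb{S}^2\subset\mathbb{R}^3$, while along $S$ the dominant energy condition $H^S\ge2$ together with the Gauss equation on $S$ and the orthogonality relations connecting $A^S|_\gamma$ to $A^\Sigma|_\gamma$ (this is where $K^S\le1$ would enter) control the boundary data from below against the same spherical reference. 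One then wants to invoke a positive mass theorem stating that such an asymptotically flat manifold, whose boundary is ``no less convex than $\mathbb{S}^2\subset\mathbb{R}^3$'' in the appropriate averaged sense, has $m_{\mathrm{ADM}}(\widehat g)\ge0$; combined with Step 1 this gives
\[
\tfrac1{8\pi}\int_\Sigma(H_e-H^\Sigma)\,\mathrm dvol_{h_\Sigma}=m(0)\ \ge\ \lim_{r\to\infty}m(r)=m_{\mathrm{ADM}}(\widehat g)\ \ge\ 0,
\]
which is the asserted inequality. \emph{Rigidity.} Equality forces $m(r)\equiv0$, hence $g_+$ flat on $E$ and $H_e=H^\Sigma$ along $\Sigma_0$, while the rigidity case of the positive mass theorem forces $(\widehat M,\widehat g)$ to be flat with $\partial\widehat M$ a subset of $\mathbb{S}^2$; restricting to $M$ exhibits $(M,g)$ as a domain in $\mathbb{R}^3$ with $S$ mapped into $\mathbb{S}^2$.

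The crux -- and presumably the reason this is stated as a conjecture rather than a theorem -- is the positive mass theorem invoked in Step 2. Its boundary model is the unit sphere, with mean curvature $2$, rather than the totally geodesic hyperplane of the Euclidean half-space, so the positive mass theorem for manifolds with non-compact boundary of Almaraz--Barbosa--de Lima \cite{almaraz2014positive} does not apply directly. One can conformally blow up $\widehat M$ at a point $p\in\gamma$ to produce an asymptotically flat manifold with non-compact boundary asymptotic to a hyperplane -- inversion maps $\mathbb{S}^2$ to a plane and preserves right angles, and $S$ is tangent to $\mathbb{S}^2$ along $\gamma$ -- but the spherical part $C$ of $\partial\widehat M$ is mean-concave as seen from inside $\widehat M$ (the sphere curves toward the excised region $\Omega$), so the resulting boundary fails the mean-convexity hypothesis of \cite{almaraz2014positive}. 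What is really needed is a positive mass theorem in which this deficit along $C$ is offset by the surplus $H^S-2\ge0$ along $S$, equivalently a Bartnik-mass positivity statement relative to the unit ball, which appears to be open. A spinorial Witten-type argument with a chiral or MIT-bag boundary condition on $S\cup C$, in the spirit of Hijazi--Montiel \cite{hijazi2014holographic}, does go through, but the boundary term in the integrated Weitzenb\"ock formula acquires a favourable sign only after the weaker quadratic rearrangement and still requires $K^S\le1$; this is precisely Proposition \ref{proposition fbie}, and upgrading it to the linear inequality while discarding $K^S\le1$ is exactly the difficulty.

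A secondary, more technical, obstacle is the construction of the Shi--Tam extension itself in this geometry. Here the foliation of $E$ must interpolate between free boundary surfaces inside the ball and closed convex surfaces enclosing it, so one has to solve the quasi-spherical equation globally with mixed initial and boundary data, handle the transition of the leaves across $\mathbb{S}^2$ and the (second) corner this may create along $\mathbb{S}^2$, ensure that the constructed metric is genuinely asymptotically flat with finite mass, and verify that the Shi--Tam monotonicity of $m(r)$ survives the passage from the interior of $B^3(0)$ to its exterior. These are expected to be routine adaptations of \cite{shi2002positive} once the correct foliation is pinned down, in contrast to the positivity input of Step 2, which I expect to be the genuine obstacle.
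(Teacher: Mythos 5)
What you have written is not a proof, and you say so yourself: the statement in question is posed in the paper as a conjecture precisely because the ingredient your Step 2 requires — a positive mass theorem for asymptotically flat three-manifolds whose boundary geometry is modelled on the unit sphere rather than on a totally geodesic hyperplane — is not available. Your strategy (free boundary Shi--Tam extension of the image of the embedding from Theorem \ref{main theorem fbie}, monotonicity of the Hawking-type quantity $m(r)$, gluing to $(M,g)$ along $\Sigma$, and an appeal to a positive mass theorem with boundary) is the same one the paper sketches in Section 6, where it is observed that the conjecture should be \emph{equivalent} to a positive mass theorem for asymptotically flat manifolds modelled on a solid cone over the embedding of $\Sigma$, with mass understood as in \cite{almaraz2014positive}. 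Your diagnosis of why \cite{almaraz2014positive} does not apply after inverting at a point of $\gamma$ (the spherical portion $C$ of the boundary becomes mean-concave from inside, so the deficit along $C$ must be offset by $H^S-2\geq 0$ along $S$) is a correct and concrete identification of the obstruction; the paper phrases the same difficulty as the absence of a geometrically invariant formulation of the dominant energy condition on the boundary component. Your remark that the Witten/Hijazi--Montiel route only yields the quadratic inequality with an extra boundary term under the additional hypothesis $K^S\leq 1$ is exactly Proposition \ref{proposition fbie}.

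Two points where your sketch understates the difficulty. First, in Step 1 the proposed foliation of $E=\mathbb{R}^3\setminus\overline{\Omega}$ must pass from compact leaves with boundary (free boundary discs near $\Sigma_0$) to closed convex spheres near infinity; no smooth foliation does this without a singular transition, so the quasi-spherical ansatz $g_+=u^2\,dr^2+g_r$ breaks down along $\mathbb{S}^2$ and the monotonicity of $m(r)$ across that transition is not a routine adaptation of \cite{shi2002positive} — it is part of the open problem, since the boundary contribution to $m'(r)$ from the lateral (cone or cap) part of $\partial E$ has no sign a priori. Second, even granting the extension, matching the mean curvature of $\Sigma_0$ in $(E,g_+)$ to $H^\Sigma$ requires $H^\Sigma>0$ and $H_e>0$ and a solvable parabolic problem with a corner at $\gamma$; the compatibility of the initial datum $u|_{\Sigma_0}=H_e/H^\Sigma$ with the lateral boundary condition along $C$ at the corner is an additional constraint you have not addressed. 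In short: your proposal correctly reconstructs the intended route and correctly locates the missing positive mass theorem as the essential gap, but it does not close that gap, and the statement remains a conjecture.
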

	A positive answer to this conjecture would mean that in certain cases, mass can be detected with only incomplete information about the geometry of the boundary of the domain under consideration. It turns out that proving such an inequality is not straightforward. On the one hand, there does not seem to be an obvious approach to adapt spinorial methods such as in \cite{hijazi2014holographic} to the free boundary setting. On the other hand, arguing as in \cite{shi2002positive}, we observe that a positive answer to the conjecture is equivalent to a positive mass type theorem for asymptotically flat manifolds which are modeled on a solid cone.
	\\ \indent 
	We now describe the proof of Theorem \ref{main theorem fbie}. As in \cite{nirenberg1953weyl}, we use the continuity method and smoothly connect the metric $h$ to the metric $h_0$ of the spherical cap whose boundary has azimuthal angle $\pi/4$. In particular, the free boundary isometric embedding problem can be solved for $h_0$. We then need to show that the solution space is open and closed. Contrary to the argument by Nirenberg, the non-linearity of the boundary condition does not allow us to use a fixed point argument. We instead use a power series of maps, as H. Weyl had initially proposed, where the maps are obtained as solutions of a linearised first-order system. This system was previously  studied by C. Li and Z. Wang in \cite{li2016weyl}. Another difficulty arises from the fact that the prescribed contact angle makes the problem seemingly overdetermined. We thus solve the linearised problem without prescribing the contact angle and recover the additional boundary condition from the constancy of the geodesic curvature and a lengthy algebraic computation. Regarding the convergence of the power series, we prove a-priori estimates using the Nirenberg trick. Here, it turns out that a recurrence relation for the Catalan numbers plays an important part. In order to show that the solution space is closed, we observe that the Codazzi equations imply a certain algebraic structure for the normal derivative of the mean curvature at the boundary. This allows us to use the maximum principle  to prove a global $C^2$-estimate for every isometric embedding. \\ \indent 
	The rest of this paper is organized as follows. In Section 2, we define the so-called solution space and show that the space of metrics on $D$ with positive Gauss curvature and geodesic curvature along $\partial D$ being equal to $1$ is path-connected. In Section 3, we study the linearised problem and show that the solution space is open. In Section 4, we prove a global curvature estimate and show that the solution space is closed. In Section 5, we prove Theorem \ref{main theorem fbie}. 
	\\\indent\textbf{Acknowledgements.} The author would like to thank his PhD advisor Guofang Wang for suggesting the problem and for many helpful conversations.
	\section{Basic properties of the solution space}
	Let $\alpha\in(0,1)$ and fix an integer $k\geq  2$. Let $D\subset\mathbb{R}^2$ be the unit two-disc and consider a $C^{k,\alpha}$-Riemannian metric $h$ defined on the closure $\bar{D}$. We assume that the Gauss curvature $K_h$ is positive and that the geodesic curvature of $\partial D$ satisfies $k_h=1$. \\ \indent We would like to find a map  
	$ F\in C^{k+1,\alpha}(\bar{D},\mathbb{R}^3)$ which isometrically embeds the Riemannian manifold $(\bar{D},g)$ into $\mathbb{R}^3$ such that the boundary $ F(\partial D)$ meets the unit sphere $\mathbb{S}^2$  orthogonally. To this end, we consider the space $\mathcal{G}^{k,\alpha}$  of all Riemannian metrics $\tilde h\in C^{k,\alpha}(\bar{D},\operatorname{Sym}(\mathbb{R}^2)$ with positive Gauss curvature and geodesic curvature along $\partial D$ equal to one. Up to a diffeomorphism of $\bar{D}$, we may write the metric $\tilde h$ in isothermal coordinates, see \cite{deturck1981some}. This means that there exists a positive function $\tilde E\in C^{k,\alpha}(\bar{D})$ such that
	\begin{align}
	\tilde h=\tilde E^2(dx_1^2+dx_2^2)=\tilde E^2(dr^2+r^2d\varphi^2). \notag
	\end{align}
	Here, $(x_1,x_2)$ denote the standard Euclidean coordinates on $D$ while $(\varphi,r)$ denote polar coordinates centred at the origin. A map $\tilde F$ isometrically embeds $(\bar{D},\tilde h)$ into $\mathbb{R}^3$ with free boundary in the unit sphere if and only if it solves the following boundary value problem
	\begin{align}
	\begin{dcases}
	& d \tilde F\cdot d  \tilde F=\tilde h \quad \text{ in } D, \\
	& |\tilde F|^2=1  \quad  \quad\hspace{.25cm} \text{ on } \partial D,
	\\
	&  \partial_r  \tilde F=  \tilde E  \tilde F \quad \hspace{.25cm}  \text{ on } \partial D.
	\end{dcases} 
	\label{fbp}
	\end{align}  
	\indent 
	We will prove the existence of such a map $F$ corresponding to the metric $h$ using the continuity method. More precisely, we define the solution space $\mathcal{G}_*^{k,\alpha}\subset \mathcal{G}^{k,\alpha}$ to be the space which contains all metrics $\tilde h\in\mathcal{G}^{k,\alpha}$ for which there exists a map $\tilde F\in C^{k+1,\alpha}(\bar{D},\mathbb{R}^3)$ solving the problem (\ref{fbp}). One may construct explicit examples to see that the space $\mathcal{G}_*^{k,\alpha}$ is non-empty. In order to show that $\mathcal{G}_*^{k,\alpha}=\mathcal{G}^{k,\alpha}$,  we equip $\mathcal{G}^{k,\alpha}$ with the $C^{k,\alpha}(\bar{D},\operatorname{Sym}(\mathbb{R}^2))$-topology and show that $\mathcal{G}^{k,\alpha}$ is path-connected, open and closed. It actually turns out that $\mathcal{G}^{k,\alpha}$ is not only path-connected but that the paths can be chosen to be analytic maps from the unit interval to $\mathcal{G}^{k,\alpha}$. \\ \indent In the proof of the next lemma, the connection and Laplacian of the Euclidean metric $\bar g$ on $D$ will be denoted by $\bar \nabla$ and $\bar \Delta$, respectively.
	\begin{lem}
		Given $h_0,$ $h_1\in\mathcal{G}^{k,\alpha}$, there exists an analytic map 
		$$
		h:[0,1]\to \mathcal{G}^{k,\alpha}
		$$
		such that $h(0)=h_0$ and $h(1)=h_1$. In particular, the space $\mathcal G^{k,\alpha}$ is path-connected. 
		\label{pathconnect}
	\end{lem}
	\begin{proof}
		Let $h_0,$ $h_1\in\mathcal{G}^{k,\alpha}$ and choose isothermal coordinates with conformal factors $E_0,E_1$, respectively. Given $t\in[0,1]$, we define a one-parameter family of Riemannian metrics $h_t$ connecting $h_0$ and $h_1$ to be  
		\begin{align}
		h_t= E_t^2(dx_1^2+dx_2^2)= E_t^2(dr^2+r^2d\varphi^2) 
		\end{align}
		where \begin{align}
		E_t=\frac{E_0 E_1}{(1-t) E_1+ tE_0}	 
		\end{align}	
		denotes the conformal factor of the metric $h_t$. Since both $E_0$ and $E_1$ are positive, $E_t$ is well-defined. A straightforward computation reveals that given a metric $h$, the Gauss curvature and the geodesic curvature of $\partial D$ satisfy the formulae
		\begin{align}
		k_{h}=-r\partial_r((rE)^{-1})\big|_{r=1}, \qquad K_h=-E^{-2}\bar\Delta\log E, 
		\end{align}
		see \cite{han2006isometric}. This implies that
		\begin{align*}
		k_{h_t}=-r\partial_r\big((1-t)(rE_0)^{-1}+t(r E_1)^{-1}\big)\big|_{r=1}=1 
		\end{align*}
		for every $t\in[0,1]$. In order to see that the Gauss curvature is positive, we abbreviate $K_0=K_{h_0}$, $K_1=K_{h_1}$ and compute
		\begin{align*}
		&\bar\Delta \log E_t\\=&\bar\Delta \log E_0 + \bar\Delta \log E_1 -\bar\Delta \log(tE_0+(1-t) E_1)\\
		=& -K_0E_0^2-K_1E_1^2 -\frac{t\bar\Delta E_0+(1-t)\bar\Delta E_1}{tE_0+(1-t)E_1}+\frac{|t\bar\nabla E_0+(1-t)\bar\nabla  E_1|^2}{(tE_0+(1-t)E_1)^2}
		\\=& -K_0E_0^2-K_1 E_1^2
		+(tE_0+(1-t) E_1)^{-1}\bigg(tE_0^3K_0+(1-t) E_1^3K_1-t\frac{|\bar\nabla E_0|^2}{E_0}-(1-t)\frac{|\bar\nabla  E_1|^2}{ E_1}\bigg)
		\\&+(tE_0+(1-t)E_1)^{-2}\big(t^2|\bar\nabla E_0|^2+(1-t)^2|\bar\nabla  E_1|^2+2t(1-t)|\bar\nabla E_0||\bar\nabla E_1|\big)
		\\=&-(tE_0+(1-t)E_1)^{-1}\big((1-t) E_1 E_0^2K_0+t E_0E_1^2K_1\big) 
		\\&-(tE_0+(1-t) E_1)^{-2}\big(t(1-t)E_1E_0^{-1}|\bar\nabla E_0|^2+t(1-t)E_0E_1^{-1}|\bar\nabla E_1|^2-2t(1-t)|\bar\nabla E_0||\bar\nabla  E_1|\big)
		\\<&0.
		\end{align*}
		In the last inequality, we used Young's inequality and the  positivity of $K_0 $, $K_1,$ $E_0$ and $E_1$ as well as $t\in[0,1]$. In particular, $K_{h_t}$ is positive for every $t\in[0,1]$. Clearly, $h_t$ is analytic with respect to $t$. 
	\end{proof}
	
	\section{Openness of the solution space}
	In this section, we show that the solution space $\mathcal{G}_*^{k,\alpha}$ is open.  Let $h\in\mathcal{G}^{k,\alpha}$ and suppose that there is a map $F\in C^{k+1,\alpha}(\bar{D},\mathbb{R}^3)$ satisfying (\ref{fbp}) with respect to the metric $h$. Since the Gaussian curvature $K=K_h$ is strictly positive, it follows that $F(\bar{D})$ is strictly convex. We will now show that for every $\tilde h\in\mathcal{G}^{k,\alpha}$ which is sufficiently close to $h$ in the $C^{2,\alpha}(\bar{D},\operatorname{Sym}(\mathbb{R}^2))$-topology for some $\alpha\in(0,1)$ there exists a solution $\tilde F\in C^{3,\alpha}(\bar{D},\mathbb{R}^3)$ of (\ref{fbp}).  To this end, we use Lemma \ref{pathconnect} to find a path $h_t$ connecting $h$ and $\tilde h$ in $\mathcal{G}^{k,\alpha}$ and observe that $h_t$ and all of its spacial derivatives are analytic with respect to $t$.
	\subsection{The linearised problem}
	
	We define a family of $C^{3,\alpha}(\bar{D},\mathbb{R}^3)$-maps $$F_t:[0,1]\times \bar{D}\to\mathbb{R}^3$$ which is analytic with respect to $t$ and satisfies $F_0=F$. This means that
	\begin{align}
	F_t=\sum_{l=0}^\infty \Psi^l\frac{t^l}{l!}, \label{taylor}
	\end{align}
	where $\Psi^0=F_0$ and $\Psi^l\in C^{3,\alpha}(\bar{D},\mathbb{R}^3)$ are solutions to certain linearised equations if $l\geq1$. Clearly, if (\ref{fbp}) is solvable up to infinite order at $t=0$ and if $F_t$ and all its time derivatives converge in $C^{2,\alpha}(\bar{{D}},\mathbb{R}^3)$ for every $t\in[0,1]$, then it follows  that $\tilde F=F_1$ is a solution of (\ref{fbp}) with respect to $\tilde h$. More precisely, we have the following lemma.
	\begin{lem}
		Consider two metrics $h$, $\tilde h\in\mathcal{G}^{k,\alpha}$ with conformal factors $E$, $\tilde E$ and let $h_t:[0,1]\to\mathcal{G}^{k,\alpha}$ be the connecting path from Lemma \ref{pathconnect} with conformal factor $E_t$. Let $F$ be a solution of the free boundary problem (\ref{fbp}) with respect to $h$ and suppose that $F_t$ as defined in (\ref{taylor}) converges in $C^{2,\alpha}(\bar{D})$  uniformly. Then the following identities hold
		\begin{align*}
		\frac{d^l}{dt^l}h_t\bigg|_{t=0}&=\operatorname{Id}\sum_{i=0}^l\binom{l}{i}\partial^i_tE_t\partial_t^{l-i}E_t \bigg|_{t=0},  \\
		\frac{d^l}{dt^l}(dF_t\cdot dF_t)\bigg|_{t=0}&=2dF\cdot d\Psi^l+\sum_{i=1}^{l-1}\binom{l}{i}d\Psi^{l-i}\cdot d\Psi^{i},  \\
		\frac{d^l}{dt^l}|F_t|^2\bigg|_{t=0}&=2F\cdot \Psi^l+ \sum_{i=1}^{l-1}\binom{l}{i}\Psi^{l-i}\cdot \Psi^{i}, \\
		\frac{d^l}{dt^l} \partial_rF_t\bigg|_{t=0}&=\partial_r \Psi^l, \\
		\frac{d^l}{dt^l}( E_tF_t)\bigg|_{t=0}&=\sum_{i=0}^{l}\binom{l}{i}\partial_t^iE_t\Psi^{l-i}\bigg|_{t=0}.
		\end{align*} 	
	\end{lem}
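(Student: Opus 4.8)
\quad The plan is to recognise all five formulae as instances of the general Leibniz rule, once one sets $\Psi^0:=F$, combined with the single fact $\partial_t^lF_t|_{t=0}=\Psi^l$. Concretely, for any pair $u,v$ of the relevant objects one has $\partial_t^l(uv)|_{t=0}=\sum_{i=0}^l\binom{l}{i}(\partial_t^iu)(\partial_t^{l-i}v)|_{t=0}$, and everything follows by substituting the appropriate pair drawn from $\{F_t,E_t\}$, reading off $\partial_t^iF_t|_{t=0}=\Psi^i$ for the $F$-factors, and performing elementary index bookkeeping. The only point that really needs care — and the one I would treat first — is the legitimacy of differentiating the power series (\ref{taylor}) term by term in $t$ and of interchanging $\partial_t$ with the spatial operations $d$, $\partial_r$ and with the products and inner products of the maps involved.

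For this, fix $t_0\in(0,1]$. By hypothesis $\sum_{l\geq0}\Psi^l t_0^l/l!$ converges in $C^{2,\alpha}(\overline D,\mathbb R^3)$, so its terms satisfy $\|\Psi^l\|_{C^{2,\alpha}}\,t_0^l/l!\leq M$ for some $M$ and all $l$. Hence, for every $j\geq0$, the formally $j$-times differentiated series $\sum_{l\geq j}\Psi^l\,t^{l-j}/(l-j)!$ is, up to a constant depending only on $j$ and $t_0$, dominated in $C^{2,\alpha}$-norm by $\sum_{l\geq j}\binom{l}{j}(|t|/t_0)^{l-j}=(1-|t|/t_0)^{-j-1}$, and therefore converges absolutely and locally uniformly on $(-t_0,t_0)$. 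By the standard theory of Banach-space valued power series it follows that $t\mapsto F_t$ is smooth on $(-t_0,t_0)$ with $\partial_t^lF_t|_{t=0}=\Psi^l$. Since the convergence is in $C^{2,\alpha}(\overline D)$, the bounded linear maps $d$ and $\partial_r$ from $C^{2,\alpha}$ to $C^{1,\alpha}$ act on the series term by term and hence commute with $\partial_t$, while the fibrewise product $C^{1,\alpha}\times C^{1,\alpha}\to C^{1,\alpha}$ and the fibrewise inner product are continuous and bilinear, hence compatible with this term-by-term differentiation; thus $dF_t\cdot dF_t$, $|F_t|^2$ and $E_tF_t$ are smooth functions of $t\in(-t_0,t_0)$ with values in $C^{1,\alpha}$, $C^{2,\alpha}$ and $C^{2,\alpha}$ respectively, whose $t$-derivatives are given by the Leibniz rule.

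With these preliminaries in place the five identities are pure bookkeeping. For $h_t$ I would use Lemma \ref{pathconnect}: in isothermal coordinates $h_t=E_t^2(dx^2+dy^2)$, i.e.\ $h_t=E_t^2\operatorname{Id}$ as a $\operatorname{Sym}(\mathbb R^2)$-valued map, with $E_t$ analytic in $t$; applying Leibniz to $E_t^2=E_t\cdot E_t$ and evaluating at $t=0$ gives the first formula. For $dF_t\cdot dF_t$ and $|F_t|^2$, Leibniz applied to the symmetric pairings $dF_t\cdot dF_t$ and $F_t\cdot F_t$ gives $\sum_{i=0}^l\binom{l}{i}d\Psi^{l-i}\cdot d\Psi^i$ and $\sum_{i=0}^l\binom{l}{i}\Psi^{l-i}\cdot\Psi^i$ at $t=0$; since $\Psi^0=F$ and the pairings are symmetric, the $i=0$ and $i=l$ terms together contribute $2dF\cdot d\Psi^l$ and $2F\cdot\Psi^l$, leaving precisely the stated sums over $1\leq i\leq l-1$. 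For $\partial_rF_t$ there is a single factor, so $\partial_t^l\partial_rF_t|_{t=0}=\partial_r\partial_t^lF_t|_{t=0}=\partial_r\Psi^l$. Finally, for $E_tF_t$, Leibniz in the two factors $E_t$ and $F_t$ together with $\partial_t^{l-i}F_t|_{t=0}=\Psi^{l-i}$ yields $\sum_{i=0}^l\binom{l}{i}\partial_t^iE_t\,\Psi^{l-i}|_{t=0}$, as claimed. The only non-formal ingredient is the term-by-term differentiation established in the second paragraph; everything after it is the general Leibniz rule and index manipulation, so that is where I expect the (minor) difficulty to lie.
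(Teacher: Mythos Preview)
Your proposal is correct and is precisely the paper's approach fleshed out: the paper's entire proof reads ``This follows from straight-forward calculations,'' and what you have written is exactly those calculations, namely the general Leibniz rule applied to the bilinear pairings together with the observation $\partial_t^l F_t|_{t=0}=\Psi^l$. Your second paragraph, justifying term-by-term differentiation of the Banach-space valued power series, is more than the paper spells out but is the natural way to make ``straight-forward'' rigorous; nothing is missing.
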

	We therefore consider the following boundary value problem. For every $l\in\mathbb{N}$, we seek to find a map $\Psi^l$ satisfying
	\begin{align}
	\begin{dcases}
	& dF\cdot d\Psi^l+\frac12\sum_{i=1}^{l-1}\binom{l}{i}d\Psi^{l-i}\cdot d\Psi^{i} =\frac12\operatorname{Id}\sum_{i=0}^l\binom{l}{i}\partial_t^iE_t\partial_t^{l-i}E_t \big|_{t=0} \qquad \text{ in } D,\\
	& F\cdot \Psi^l+\frac12 \sum_{i=1}^{l-1}\binom{l}{i}\Psi^{l-i}\cdot \Psi^{i}=0  \hspace{0.35cm} \text{ on } \partial D, \\
	& \partial_r \Psi^l =\sum_{i=0}^{l}\binom{l}{i}\partial_t^iE_t\Psi^{l-i}\big|_{t=0} \hspace{0.1cm}\qquad \text{ on } \partial D.\end{dcases}	 
	\label{linearizedequations}
	\end{align}  
	The first equation is understood in the sense of symmetric two-tensors, which means that we use the convention $d^1d^2=d^2d^1=\operatorname{Sym}(d^1\otimes d^2)$, where $d^1,d^2$ are the dual one-forms of the given coordinate system. 
	\begin{lem}
		Consider two metrics $h,$ $\tilde h\in\mathcal{G}^{k,\alpha}$ with conformal factors $E,$ $\tilde E$ and let $h_t:[0,1]\to\mathcal{G}^{k,\alpha}$ be the connecting path from Lemma \ref{pathconnect}. Let $F$ be a solution of the free boundary problem (\ref{fbp}) with respect to $h$ and suppose that there exists a family $\{\Psi^l\in C^{2,\alpha}(\bar{D}):l\in\mathbb{N}\}$ satisfying (\ref{linearizedequations}) for every $l\in\mathbb{N}$. Furthermore, assume that $F_t$ as defined in (\ref{taylor}) converges in $C^{2,\alpha}(\bar{D})$  uniformly. Then there holds
		$$
		\begin{dcases}
		& d F_t\cdot d  F_t=h_t \quad \text{ in } D, \\
		& |F_t|^2=1 \quad \hspace{0.8cm} \text{ on } \partial D, \\
		&  \partial_r   F_t=   E_tF_t \quad \hspace{0.25cm} \text{ on } \partial D.
		\end{dcases}
		$$
		for every $t\in[0,1]$. Here, $E_t$ is the conformal factor of the metric $h_t$.
		\label{existence criterion}
	\end{lem}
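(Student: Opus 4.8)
The plan is to reconstruct the three identities of \eqref{fbp} for $F_t$ from their Taylor coefficients at $t=0$, using that every quantity in sight is analytic in $t$ on $[0,1]$ by hypothesis. The key observation is that two functions of $t$ that are analytic on $[0,1]$ and have the same derivatives of all orders at $t=0$ must coincide on all of $[0,1]$; so it suffices to check that the full Taylor series (in $t$) of the left- and right-hand sides of each equation in \eqref{fbp} agree term by term at $t=0$. Before doing this I would first record that, since $F_t$ converges in $C^{2,\alpha}(\overline D)$ together with all of its time derivatives — here one should note that the series of $l$-th time derivatives is obtained by formally differentiating \eqref{taylor}, and the stated uniform convergence is what legitimises interchanging $\tfrac{d^l}{dt^l}$ with the infinite sum — the maps $dF_t\cdot dF_t$, $|F_t|^2$ and $\partial_r F_t - E_t F_t$ are themselves real-analytic in $t$ with values in the appropriate Hölder spaces.

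First I would treat the interior equation. Differentiating $dF_t\cdot dF_t$ $l$ times in $t$ at $t=0$ gives, by the second identity of the previous lemma, $2\,dF\cdot d\Psi^l+\sum_{i=1}^{l-1}\binom{l}{i}d\Psi^{l-i}\cdot d\Psi^i$; by the first equation of \eqref{linearizedequations} this equals $\operatorname{Id}\sum_{i=0}^l\binom{l}{i}\partial_t^i E_t\,\partial_t^{l-i}E_t\big|_{t=0}$, which by the first identity of the previous lemma is exactly $\tfrac{d^l}{dt^l}h_t\big|_{t=0}$. Hence $dF_t\cdot dF_t$ and $h_t$ have the same Taylor expansion at $t=0$, and both being analytic on $[0,1]$, they agree there; in particular $F_t$ is an immersion for all $t$ since $h_t$ is a Riemannian metric. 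For $l=0$ this reads $dF\cdot dF=h_0=h$, which holds because $F$ solves \eqref{fbp}.

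Next I would do the two boundary equations in the same fashion. For $|F_t|^2$, the $l$-th $t$-derivative at $0$ is $2F\cdot\Psi^l+\sum_{i=1}^{l-1}\binom{l}{i}\Psi^{l-i}\cdot\Psi^i$, which vanishes on $\partial D$ for $l\geq 1$ by the second equation of \eqref{linearizedequations}, while for $l=0$ it equals $|F|^2=1$ by \eqref{fbp}; hence $|F_t|^2\equiv 1$ on $\partial D$ for all $t$. For the Neumann-type condition, the $l$-th $t$-derivative of $\partial_r F_t$ at $0$ is $\partial_r\Psi^l$ (fourth identity), and the $l$-th $t$-derivative of $E_t F_t$ at $0$ is $\sum_{i=0}^l\binom{l}{i}\partial_t^i E_t\,\Psi^{l-i}\big|_{t=0}$ (fifth identity); the third equation of \eqref{linearizedequations} says precisely that these coincide on $\partial D$ for all $l\in\mathbb N$, with the $l=0$ case being $\partial_r F=E F$ from \eqref{fbp}. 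Thus $\partial_r F_t - E_t F_t$, an analytic function of $t$ into $C^{1,\alpha}(\partial D)$, has vanishing Taylor series at $t=0$ and therefore vanishes identically on $[0,1]$, completing the proof.

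The step I expect to require the most care is the justification that $\tfrac{d^l}{dt^l}$ commutes with the series in \eqref{taylor} and that the resulting composite quantities $dF_t\cdot dF_t$ etc.\ are genuinely analytic in $t$ with values in a Banach space — i.e.\ that "converges in $C^{2,\alpha}$ uniformly together with all time derivatives'' is strong enough to run the analytic-continuation argument; once that is in place, everything else is a bookkeeping exercise matching \eqref{linearizedequations} against the two identity lemmas, term by term in $l$.
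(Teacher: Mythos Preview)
Your proposal is correct and follows exactly the approach the paper indicates: the paper's own proof consists of the single sentence ``This follows from the previous lemma and well-known facts about analytic functions,'' and you have simply spelled out that sentence in detail by matching Taylor coefficients via the identities of the preceding lemma and the equations \eqref{linearizedequations}. Your caveat about justifying termwise differentiation is well taken but harmless here, since the uniform $C^{2,\alpha}$ convergence hypothesis (which in the application comes from the geometric bound $|\Psi^l|_{C^{2,\alpha}}\le l!(\Lambda\hat\epsilon)^l$ with $\Lambda\hat\epsilon<1$) forces a radius of convergence strictly greater than $1$, so the standard theory of Banach-space-valued power series applies.
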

	\begin{proof}
		This follows from the previous lemma and well-known facts about analytic functions.
	\end{proof}
	
	We now modify the approach used in  \cite{li2016weyl} to find solutions of (\ref{linearized equations}).	
	For ease of notation, we fix an integer $l\in\mathbb{N}$ and define $\Psi=\Psi^l$. Given a coordinate frame $\partial_1,$ $\partial_2$ with dual one-forms $d^1,$ $d^2$,   we   define $$u_i=\Psi\cdot \partial_iF$$ and $$v=\Psi \cdot \nu,$$ where $\nu$ is the unit normal of the surface $F(\bar{D})$ pointing inside the larger component of $B_1(0)-F(D)$. Here and in the following, the Latin indices $i,j,m,n$ always range over $\{1,2\}$ unless specified otherwise.  We  consider the one-form
	\begin{align*}
	w=\Psi\cdot dF=u_id^i.	 
	\end{align*}
	We denote the second fundamental form of the surface $F$ with respect to our choice of unit normal $\nu$ by $A=(A_{ij})_{ij}$ and compute
	\begin{align*}
	\nabla_h w&= \partial_ju_id^i d^j -\Gamma^m_{i,j}u_md^id^j\\
	&=\partial_iF\cdot\partial_j\Psi d^i d^j +\Gamma^m_{i,j}\Psi\cdot \partial_mF d^i d^j -\Psi\cdot \nu A_{ij}d^i d^j-\Gamma^m_{i,j}u_md^i d^j \\
	&=\partial_i F\cdot\partial_j\Psi d^i d^j-v A_{ij}d^i d^j,
	\end{align*} 
	where $\nabla_h$ and $\Gamma^m_{i,j}$ denote the Levi-Civita connection and  Christoffel-Symbols of the metric $h$, respectively. In the second equation, we used that $$\partial_i\partial_j F= \Gamma^{m}_{i,j}\partial_m F-A_{ij}\nu.$$ Hence, if $\tilde q \in C^{k,\alpha}(\bar{D},\operatorname{Sym}(\mathbb{R}^2))$ is a symmetric covariant two-tensor, $\psi\in C^{k+1,\alpha}(\bar{D})$ and $\Phi\in C^{k,\alpha}(\bar{D},\mathbb{R}^3)$,  the system 
	\begin{align*}
	\begin{dcases}
	& dF\cdot d\Psi=\tilde q \hspace{0.05cm}\text{ in } D,\\
	& F\cdot \Psi=\psi \hspace{0.05cm}\text{ on } \partial D, \\
	& \partial_r \Psi =\Phi \hspace{0.05cm}\text{ on } \partial D,\end{dcases}	 
	\end{align*}
	is equivalent to 
	\begin{align}
	\begin{dcases}
	&\partial_1u_{1}-\Gamma^m_{1,1}u_m=\tilde q_{11}-v A_{11} \text{ in }D,\\
	&\partial_1u_{2}+\partial_2u_{1}-2\Gamma^m_{1,2}u_m=2(\tilde q_{12}-v A_{12})\text{ in }D,\\
	&\partial_2u_{2}-\Gamma^m_{2,2}u_m=\tilde q_{22}-v A_{22}\text{ in }D,\\
	& w(\partial_r)=E\psi \text{ on } \partial D, \\
	& \partial_r \Psi =\Phi \hspace{0.0cm}\text{ on } \partial D.\end{dcases}	 
	\label{linearized equations}
	\end{align}
	This system is over-determined. However, it turns out that it suffices to solve the first four lines, as in the special situation of (\ref{linearizedequations}), the last equation will be automatically implied by the constancy of the geodesic curvature along $\partial D$. Another way of writing the first three lines of (\ref{linearized equations}) is $$\text{Sym}(\nabla_h w)=\tilde q-v A.$$ Since $F(\bar{D})$ is strictly convex, the second fundamental form $A$ defines a Riemannian metric on $\bar{D}$. We can thus take the trace with respect to $A$ to find
	\begin{align*}
	v = \frac12\operatorname{tr}_A(\tilde q-\text{Sym}(\nabla_h w) ).
	\end{align*}
	Given $b\in\{0,1\}$, we define $$\mathcal{A}^{b,\alpha}_A\subset C^{b,\alpha}(\bar{D},\operatorname{Sym}(T^*\bar{D}\otimes T^*\bar{D}))$$ to be the sections of the bundle of trace-free (with respect to $A$) symmetric (0,2)-tensors of class $C^{b,\alpha}$. Moreover, we define $$\mathcal{B}^{b,\alpha}_A=C^{b,\alpha}(\bar{{D}},T^*\bar{D})$$ to be the one-forms of class $C^{b,\alpha}$. Next, we define the operator $${L}:\mathcal B^{1,\alpha}_A\to\mathcal A^{0,\alpha}_A, \quad
	\omega\mapsto\operatorname{Sym}(\nabla_h \omega)-\frac12\operatorname{tr}_A(\operatorname{Sym}(\nabla_h \omega))A.
	$$
	We are then left to find a solution $w$ of the equation $${L}(w)=\tilde q-\frac12 \operatorname{tr}_A(\tilde q)A.$$ We abbreviate the right hand side by $q$. We note that $\mathcal A^{b,\alpha}_A$ and $\mathcal{B}^{b,\alpha}_A$ are both isomorphic to $\mathcal{C}^{b,\alpha}=C^{b,\alpha}(\bar{D},\mathbb{R}^2)$ via the isomorphisms
	\begin{align*}
	&(a_1,a_2)\mapsto a_1dx^1dx^1+a_2(dx^1dx^2+dx^2dx^1)-(A^{22})^{-1}(A^{11}a_1+2A^{12}a_2)dx^2 dx^2,\\
	&(u_1,u_2)\mapsto u_idx^i.
	\end{align*}
	Here, $x_1,x_2$ are the standard Euclidean coordinates. At this point, we emphasize that $(A^{ij})_{ij}$ denotes the {inverse tensor} of the second fundamental form $A$  and {not} the quantity $h^{im}h^{jn}A_{mn}$. We also define inner products on $\mathcal A^{b,\alpha}_{A}$ and $\mathcal{B}^{b,\alpha}_{A}$, respectively. Namely,
	\begin{align*}
	&\langle q^0,q^1\rangle_{\mathcal{A}_A}=\int_D KA^{ij}A^{mn}q^0_{im}q^1_{jn}\,\text{d}{vol}_h=\int_D K\langle q^0,q^1\rangle_A\,\text{d}{vol}_h,\\
	&\langle \omega^0,\omega^1\rangle_{\mathcal{B}_A}=\int_D KA^{ij}\omega^0_i\omega^1_j\,\text{d}{vol}_h=\int_D K\langle \omega^0,\omega^1\rangle_A\,\text{d}vol_h,
	\end{align*}
	where $q^0,$ $q^1\in\mathcal{A}^{b,\alpha}_A$ and $\omega^0,$ $\omega^1\in \mathcal{B}^{b,\alpha}_A$. The particular choice of the inner product does not really matter as the strict convexity implies that these inner products are equivalent to the standard $L^2$-product with respect to the metric $h$. However, our choice implies a convenient form for the adjoint operator of ${L}$.\\ \indent 
	As was observed by C. Li and Z. Wang in \cite{li2016weyl}, the operator ${L}$ is elliptic. For the convenience of the reader, we provide a brief proof.
	\begin{lem}
		The operator $L$ is a linear elliptic first-order operator.
		\label{ellipticity}
	\end{lem}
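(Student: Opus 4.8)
The plan is to verify ellipticity by computing the principal symbol of $L$ explicitly. Since $L$ is manifestly linear (it is built out of the connection $\nabla_h$, contractions, and multiplication by the fixed tensor $A$, all of which act linearly on $\omega$) and first order (the only derivatives appearing are the first derivatives of $\omega$ inside $\nabla_h \omega$), the only substantive point is to check that the symbol is an isomorphism in every codirection. Because the second fundamental form $A$ is positive definite (by strict convexity, as noted after the definition of $\nu$), we may work in a coordinate chart near any point $p\in\overline D$ in which $A_{ij}(p)=\delta_{ij}$; the Christoffel symbols do not enter the symbol, so we lose nothing by replacing $\nabla_h$ with $\partial$ at $p$.

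First I would fix a nonzero cotangent vector $\xi=\xi_idx^i$ at $p$ and compute $\sigma_\xi(L)$. For a one-form $\omega=\omega_jdx^j$ the symbol replaces $\partial_i$ by $i\xi_i$ (or simply $\xi_i$ up to an irrelevant constant), so $\operatorname{Sym}(\nabla_h\omega)$ has symbol the symmetric $(0,2)$-tensor $\tfrac12(\xi_i\omega_j+\xi_j\omega_i)$, and hence
\begin{align*}
\sigma_\xi(L)(\omega)=\tfrac12(\xi_i\omega_j+\xi_j\omega_i)-\tfrac12\langle\xi,\omega\rangle_A\,A_{ij},
\end{align*}
where $\langle\xi,\omega\rangle_A=A^{kl}\xi_k\omega_l$. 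I then want to show this map from one-forms to trace-free symmetric $(0,2)$-tensors (both two-dimensional spaces) is injective, which by equality of dimension gives bijectivity. Suppose $\sigma_\xi(L)(\omega)=0$. Contracting with $A^{ij}$ recovers the trace-free condition and gives no information, so instead I contract the identity $\tfrac12(\xi_i\omega_j+\xi_j\omega_i)=\tfrac12\langle\xi,\omega\rangle_A A_{ij}$ with $A^{jk}\xi_i$, or more cleanly I argue as follows: in the $A$-orthonormal frame at $p$ pick coordinates so that $\xi=|\xi|_A\,dx^1$. Then the vanishing of the off-diagonal and diagonal components of $\sigma_\xi(L)(\omega)$ reads $\tfrac12|\xi|_A\omega_2=0$ (the $12$-component) and $|\xi|_A\omega_1-\tfrac12|\xi|_A\omega_1=0$ (the $11$-component, after subtracting the trace term $\tfrac12|\xi|_A\omega_1$), forcing $\omega_1=\omega_2=0$ since $|\xi|_A\neq0$. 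Hence $\sigma_\xi(L)$ is injective, therefore an isomorphism, and $L$ is elliptic.

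The only mild subtlety — and the part I would be most careful about — is bookkeeping the trace term correctly: because the target space $\mathcal A^{0,\beta}_A$ consists of $A$-trace-free tensors, the relevant notion of ellipticity is that $\sigma_\xi(L)$ is an isomorphism between the $2$-dimensional space of one-forms and the $2$-dimensional space of $A$-trace-free symmetric $(0,2)$-tensors, rather than onto all symmetric $(0,2)$-tensors; this is exactly why the $-\tfrac12\operatorname{tr}_A(\cdot)A$ correction is included in the definition of $L$, and it is what makes the dimension count work. Once the symbol computation is organized in the $A$-orthonormal frame the rest is immediate, so I expect no real obstacle here; the lemma is essentially a symbol bookkeeping exercise, included for the reader's convenience as the text says.
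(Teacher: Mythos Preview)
Your proof is correct and follows essentially the same approach as the paper: both diagonalize $A$ at a point and verify that the principal symbol is an isomorphism. The only cosmetic difference is packaging---the paper passes through the explicit identifications $\mathcal{A}^{b,\beta}_A\cong\mathcal{B}^{b,\beta}_A\cong C^{b,\beta}(\overline D,\mathbb{R}^2)$ and checks that the resulting $2\times2$ symbol matrix has nonzero determinant, whereas you work intrinsically with the trace-free target and check injectivity componentwise; the underlying computation is the same.
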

	\begin{proof}
		We regard $L$ as an operator from $\mathcal{C}^{1,\alpha}$ to $\mathcal{C}^{0,\alpha}$. The leading order part of the operator is given by
		\begin{align}
		\tilde L(u_1,u_2)=(\partial_1 u_1-\frac{A_{11}}{2}A^{ij}\partial_i u_j,\frac{\partial_1 u_2+\partial_2 u_1}{2}-\frac{A_{12}}{2}A^{ij}\partial_i u_j).
		\end{align}
		In order to compute the principal symbol at a point $p\in D$, we may rotate the coordinate system such that $A$ is diagonal. It then follows that
		\begin{align}
		\tilde L(u_1,u_2)=\begin{pmatrix}
		\frac12 & 0 \\ 0 & \frac12
		\end{pmatrix}
		\partial_1 \begin{pmatrix}
		u_1 \\ u_2
		\end{pmatrix} + \begin{pmatrix}
		0 & -\frac12\frac{A_{11}}{A_{22}} \\ \frac12 & 0. 
		\end{pmatrix}\partial_2 \begin{pmatrix}
		u_1 \\ u_2
		\end{pmatrix}.
		\end{align}
		Given $\xi \in \mathbb{S}^1$, the principal symbol  thus equals
		$$
		\sigma_L(\xi)=\sigma_{\tilde L}(\xi)=\frac12
		\begin{pmatrix}
		\xi_1 & -\frac{A_{11}}{A_{22}}\xi_2  \\
		\xi_2 & \xi_1
		\end{pmatrix}.
		$$
		Consequently, $$4\det(\sigma_L(\xi))=\xi^2_1+\xi_2^2 A^2_{11}/A^2_{22}>0,$$ as $F(D)$ is strictly convex. 
	\end{proof}
	We proceed to calculate the adjoint of ${L}$ denoted by ${L}^*$. Let $\omega\in \mathcal B^{1,\alpha}_A$ and $q\in \mathcal A^{1,\alpha}_A$. We denote the outward co-normal of $\partial D$ by $\mu=\mu^i\partial_i$ and the formal adjoint of $\nabla_h$, regarded as an operator mapping $(0,1)$-tensors to $(0,2)$-tensors, by $-\operatorname{div}_h$, regarded as an operator mapping $(0,2)$-tensors to $(0,1)$-tensors. Moreover, we denote the musical isomorphisms of $h$ and $A$ by $\sharp_A, $ $\sharp_h, $ $\flat_A$ and $\flat_h$.  Using integration by parts and the fact that $q$ is trace free with respect to $A$, we obtain
	\begin{equation}
	\label{adjoint equation}
	\begin{aligned}
	\langle L(\omega),q\rangle_{\mathcal{A}_A}=&\int_D K\langle \nabla_h \omega,q\rangle_A \,\text{d}{vol}_h-\frac12\int_D KA^{ij}A^{mn} \operatorname{tr}_A(\text{Sym}(\nabla_h \omega))A_{im}q_{jn}\,\text{d}{vol}_h 
	\\=&\int_D K\langle \nabla_h \omega, q^{\sharp_A\sharp_A\flat_h\flat_h}\rangle_h\, \text{d}{vol}_h
	\\=&-\int_D \langle \omega,\operatorname{div}_h(K q^{\sharp_A\sharp_A\flat_h\flat_h})\rangle_h\,\text{d}{vol}_h+\int_{\partial D} K \langle \omega,\iota_\mu q^{\sharp_A\sharp_A\flat_h\flat_h}\rangle_h \,\text{d}{vol}_h
	\\=&-\int_D K\langle \omega,\frac1K(\operatorname{div}_h(Kq^{\sharp_A\sharp_A\flat_h\flat_h}))^{\flat_A\sharp_h}\rangle_A\,\text{d}{vol}_h+\int_{\partial D} K\langle \omega,\iota_{\mu^{\flat_h}}q^{\sharp_A} \rangle_A\, \text{d}{vol}_h.
	\end{aligned}
	\end{equation}
	Consequently, the adjoint operator $$L^*:\mathcal{A}^{1,\alpha}_A\to\mathcal{B}^{0,\alpha}_A$$ is given by 
	\begin{align}
	q\mapsto-\frac1K(\operatorname{div}_h(Kq^{\sharp_A\sharp_A\flat_h\flat_h}))^{\flat_A\sharp_h}. \label{adjoint operator}
	\end{align}
	As ${L}^*$ is the adjoint of an elliptic operator, it is elliptic itself. We now take a closer look at the boundary term. We consider isothermal polar coordinates $(\varphi,r)$ centred at the origin with conformal factor $E$  and compute at $\partial D$, using the free boundary condition, that 
	\begin{align}
	-A_{r\varphi}=\partial_\varphi\partial_r F\cdot \nu=\partial_\varphi (EF)\cdot \nu=0. \label{A cond boundary}
	\end{align}
	The last equality holds since $F,$ $\partial_\varphi F$ are both tangential at $\partial D$. Consequently, we have $A^{r\varphi}\equiv 0$ on $\partial D$. Combining this with $\operatorname{tr}_A(q)=0$ and writing $\omega=\omega_id^i$ we obtain 
	\begin{equation}
	\begin{aligned}
	\int_{\partial D} K\langle \omega,\iota_{\mu^{\flat_h}}q^{\sharp_A} \rangle_A \,\text{d}{vol}_h&= 
	\int_{\partial{D}}KA^{ij}A^{mn}\omega_i\mu^{\flat_h}_mq_{jn}\,\text{d}{vol}_h
	\\&=\int_{\partial D} KA^{rr}\mu^{\flat_g}_r(A^{\varphi\varphi}\omega_\varphi q_{\varphi r}+A^{rr}\omega_rq_{rr})\,\text{d}{vol}_h
	\\&=\int_{\partial D}^{} KA^{rr}A^{\varphi\varphi}\mu^{\flat_h}_r(\omega_\varphi q_{\varphi r}-\omega_rq_{\varphi\varphi})\,\text{d}{vol}_h
	\\&=\int_{\partial D}^{} \frac{1}{E^2}\mu^r(\omega_\varphi q_{\varphi r}-\omega_rq_{\varphi\varphi})\,\text{d}{vol}_h
	\\&=\int_{\partial D}  (\langle \omega,\iota_\mu q\rangle_{(\partial D,h)}-\omega(\mu)\operatorname{trace}_{(\partial D,h)}(q))\,\text{d}{vol}_h.	
	\end{aligned}
	\label{boundary integral}
	\end{equation}
	This leads us to define the boundary operators 
	$$R_1:\mathcal{B}^{0,\alpha}_A\to C^{0,\alpha}(\partial D), \qquad R_1^*:\mathcal{A}^{0,\alpha}_A\to C^{0,\alpha}(\partial D),$$ $$R_2:\mathcal{B}^{0,\alpha}_A\to C^{0,\alpha}(\partial D,T^*\partial D), \qquad R^*_2:\mathcal{A}^{0,\alpha}_A\to C^{0,\alpha}(\partial D, T^*\partial D)
	$$
	to be 
	\begin{align*}
	& R_1(\omega)=\omega(\mu),\qquad\qquad\hspace{0.4cm} R^*_1(q)=\operatorname{trace}_{(\partial D,h)}(q),  \\
	&R_2(\omega)=e^*\omega, \qquad\qquad\hspace{0.2cm} \quad  R^*_2(q)=e^*\iota_\mu q.
	\end{align*}
	Here, $e:\partial D\to\bar{{D}}$ denotes the inclusion map. Clearly, given $b\in\{0,1\}$, the spaces $C^{b,\alpha}(\partial D)$ and $C^{b,\alpha}(\partial D,T^*\partial D)$ are  isomorphic. \\ \indent 
	Next, we define the operators $$\mathcal{L}:\mathcal{B}^{1,\alpha}_A\to\mathcal{A}^{0,\alpha}_A\times C^{1,\alpha}(\partial D), \quad \omega\mapsto (L\omega,R_1(\omega))$$ and $$\mathcal{L}^*:\mathcal{A}^{1,\alpha}_A\to \mathcal{B}^{0,\alpha}_A\times C^{1,\alpha}(\partial D, T^*\partial D), \quad q\mapsto (L^*(q),R_2^*(q)).$$ Given $\omega^0,$ $\omega^1\in \mathcal{B}^{0,\alpha}_A$, $q^0,$ $q^1\in\mathcal{A}^{0,\alpha}_A$, $\psi\in C^{0,\alpha}(\partial D)$ and $\zeta \in C^{0,\alpha}(\partial D, T^*\partial D)$, we define the inner products 
	\begin{align*}
	\langle\langle (q^0,\psi),q^1\rangle\rangle_{\mathcal{A}_A} &= \langle q^0,q^1 \rangle_{\mathcal{A}_A}+\int_{\partial D} \psi R^*_1(q^1)\,\text{d}{vol}_h\\
	\langle\langle \omega^0,(\omega^1,\zeta)\rangle\rangle_{\mathcal{B}_A}&=\langle \omega^0,\omega^1 \rangle_{\mathcal{B}_A} +\int_{\partial D}\langle\zeta,  R_2(\omega^0)\rangle_{(\partial D,h)}\,  \text{d}{vol}_h.
	\end{align*}
	According to (\ref{adjoint equation}) and (\ref{boundary integral}), there holds
	\begin{align}
	\langle \langle \mathcal{L}(\omega),q\rangle\rangle_{\mathcal{A}_A}=\langle\langle \omega,\mathcal{L}^*(q)\rangle\rangle_{\mathcal{B}_A}
	\label{boundary adjoint}
	\end{align}
	for every $\omega\in \mathcal{B}_A^{1,\alpha}$ and $q\in\mathcal{A}_A^{1,\alpha}$.
	\subsection{Existence of solutions to the linearised problem}
	First-order elliptic boundary problems satisfy a version of the Fredholm alternative if the boundary operator satisfies a certain compatibility condition, the so-called Lopatinski-Shapiro condition; cf. \cite{wendlandelliptic}. A definition of this condition can be found in Chapter 4 of \cite{wendlandelliptic} for instance. It can be proved that both $\mathcal{L}$ and $\mathcal{L}^*$ satisfy this condition. We postpone the proof to the appendix.\\ \indent  We proceed to prove the following solvability criterion. From now on, all norms are computed with respect to the isothermal Euclidean coordinates on $D$.
	\begin{lem}
		\label{Fredholm}
		Let $q\in\mathcal{A}^{0,\alpha}_A$ and  $\psi \in C^{1,\alpha}(\partial D)$. Then there exists a one-form $w\in \mathcal{B}^{1,\alpha}_{A}$ with $\mathcal{L}(w)=(q,\psi)$ if and only if
		\begin{align}
		\langle q,\hat q \rangle_{\mathcal{A}_A}+\int_{\partial D} \psi R^*_1(\hat q)\,\text{d}{vol}_h=0
		\label{existence criterion2}
		\end{align}
		for every $\hat q\in \ker(\mathcal{L}^*)$. If (\ref{existence criterion2}) holds, then $w$ satisfies the estimate
		\begin{align}
		|w|_{C^{1,\alpha}(\bar{D})}\leq c( |q|_{C^{0,\alpha}(\bar{D})}+|\psi|_{C^{1,\alpha}(\partial D)}+|w|_{C^{0,\alpha}(\bar{D})}),
		\end{align}
		where $c$ depends on $\alpha$, $|A|_{C^{0,\alpha}(\bar{D})}$ and $|h|_{C^{1,\alpha}(\bar{D})}$. Moreover, if $q$ is of class $C^{b,\alpha}$ and $\psi$ of class $C^{b+1,\alpha}$ for some integer $1\leq b\leq k-1$, then $w$ is of class $C^{b+1,\alpha}$.
	\end{lem}
	\begin{proof}
		We can regard the operators $\mathcal{L}$ and $\mathcal{L}^*$ as mappings from $$C^{1,\alpha}(\bar{D},\mathbb{R}^2)\to C^{0,\alpha}(\bar{D},\mathbb{R}^2)\times C^{1,\alpha}(\partial D).$$ According to Lemma \ref{ellipticity} and Lemma \ref{lopatinski condition}.1, the operators $\mathcal{L}$ and $\mathcal{L}^*$ are elliptic operators that satisfy the Lopatinski-Shapiro condition. Consequently, they are  Fredholm operators, see \cite[Theorem 4.2.1]{wendlandelliptic}. The existence of a $C^{1,\alpha}$-solution under the given hypothesis now follows from the identity (\ref{boundary adjoint}) and \cite[Theorem 1.3.4]{wendlandelliptic}. Moreover, \cite[Theorem 4.1.2]{wendlandelliptic} provides us with the a-priori estimate. The claimed regularity follows from standard elliptic theory.		\end{proof}
	In order to complete the proof of the existence of solutions to the first four lines of the linearised equation (\ref{linearized equations}), we proceed to show that the kernel of $\mathcal{L}^*$ is empty. Let $\hat q\in\mathcal{A}_A^{1,\alpha}$ such that $\mathcal{L}^*(\hat q)=0$.  We first transform (\ref{adjoint operator}) into a more useful form.
	\begin{lem}
		The $(0,3)$ tensor $\nabla_h \hat q$ is symmetric. In particular, in any normal coordinate frame, there holds
		\begin{align}
		0=\partial_1\hat q_{22}-\partial_2\hat q_{12} \label{a1der}
		\end{align}
		as well as
		\begin{align}
		0=\partial_1\hat q_{12}-\partial_2\hat q_{11}. \label{a2der}
		\end{align}
	\end{lem}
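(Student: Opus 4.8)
The plan is to show that $\mathcal{L}^*(\hat q) = 0$ forces $\nabla_h \hat q$ to be a fully symmetric $(0,3)$-tensor, after which the coordinate identities \eqref{a1der} and \eqref{a2der} follow by simply writing out what full symmetry means in a normal frame. The key observation is that $\hat q$ is trace-free with respect to $A$ and that the equation $L^*(\hat q) = 0$ together with the boundary condition $R_2^*(\hat q) = 0$ is precisely the Euler--Lagrange formulation coming from the adjoint pairing \eqref{boundary adjoint}; so the content here is purely the interior equation $L^*(\hat q) = 0$.

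First I would unravel the definition \eqref{adjoint operator} of $L^*$. Writing $p := \hat q^{\sharp_A\sharp_A\flat_h\flat_h}$, the equation reads $\operatorname{div}_h(Kp) = 0$ as a $(0,1)$-tensor, i.e. $h^{jk}\nabla_j(Kp_{kl}) = 0$. I would then use the Gauss equation: for a strictly convex surface in $\mathbb{R}^3$, the Gauss curvature is $K = \det A / \det h$, so that the cofactor/inverse relation gives $K A^{ij} = h^{il}h^{jm}A_{lm}$ — this is exactly the warning the paper issues about not confusing $A^{ij}$ with $h^{il}h^{jm}A_{lm}$, and it is the identity that makes everything collapse. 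Consequently $K p_{kl} = K A^{ki'}A^{lj'} h_{i'i}h_{j'j}\hat q^{ij} = \cdots$ should simplify, via two applications of $KA = A^{\text{raised by }h}$, to something of the form $(\text{cofactor of }A)$ contracted with $\hat q$; the upshot is that $\operatorname{div}_h(Kp) = 0$ becomes, in a normal frame for $h$ at a point (where $\Gamma = 0$ and, by a rotation, $A$ is whatever we like), a pair of first-order PDEs relating $\partial_1 \hat q_{ij}$ and $\partial_2 \hat q_{ij}$. Combining these with the trace-free condition $A^{ij}\hat q_{ij} = 0$ should eliminate the dependence on $A$ entirely and leave precisely \eqref{a1der}--\eqref{a2der}, which are themselves equivalent to the statement that $\nabla_h \hat q$ is totally symmetric (it is automatically symmetric in the last two slots since $\hat q$ is symmetric; symmetry in the first two slots is the content of the two displayed equations, one for each independent component).

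I expect the main obstacle to be bookkeeping the musical isomorphisms in \eqref{adjoint operator} correctly and carrying out the simplification of $\hat q^{\sharp_A\sharp_A\flat_h\flat_h}$ using the Gauss-equation identity without sign or index errors; conceptually nothing deep happens, but one must be careful that $\sharp_A$ raises with $A^{ij}$ (the matrix inverse of $A_{ij}$, not $K A^{ij}$ in some other normalization) and that the factor $K$ appearing inside $\operatorname{div}_h$ is exactly the one that converts $A^{ij}A^{lm}$ into a cofactor expression. A clean way to organize the computation is to fix a point $p \in D$, choose normal coordinates for $h$ at $p$, and additionally rotate so that $A$ is diagonal there (as in the proof of Lemma \ref{ellipticity}); then $K = A_{11}A_{22}$, $KA^{11} = A_{22}$, $KA^{22} = A_{11}$, and the equation $\operatorname{div}_h(Kp) = 0$ can be read off coordinate by coordinate. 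Matching the two resulting scalar equations against the trace-free relation $A_{22}\hat q_{11} + A_{11}\hat q_{22} = 0$ (wait — trace-free means $A^{ij}\hat q_{ij}=0$, i.e. $A^{11}\hat q_{11} + 2A^{12}\hat q_{12} + A^{22}\hat q_{22} = 0$, which with $A$ diagonal is $A^{11}\hat q_{11} + A^{22}\hat q_{22}=0$) and differentiating should produce \eqref{a1der} and \eqref{a2der} after cancellation. Since normal coordinates can be chosen at every point and the identities \eqref{a1der}, \eqref{a2der} are tensorial in character once interpreted as $\operatorname{Sym}(\nabla_h \hat q) = \nabla_h \hat q$, establishing them at an arbitrary point suffices.
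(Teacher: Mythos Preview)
Your overall strategy is the right one and is essentially what Li--Wang do: unwind $L^*(\hat q)=0$ as $\operatorname{div}_h(K\hat q^{\sharp_A\sharp_A\flat_h\flat_h})=0$, pass to normal coordinates, and read off \eqref{a1der}--\eqref{a2der}. But the specific identity you lean on, $KA^{ij}=h^{il}h^{jm}A_{lm}$, is \emph{false}. In a normal frame for $h$ with $A$ diagonal, $KA^{11}=\lambda_1\lambda_2\cdot\lambda_1^{-1}=\lambda_2$, while $h^{1l}h^{1m}A_{lm}=A_{11}=\lambda_1$; these disagree unless $\lambda_1=\lambda_2$. (The correct cofactor identity is $KA^{ij}=\epsilon^{ik}\epsilon^{jl}A_{kl}$ with $\epsilon$ the area form, not $h^{ik}h^{jl}A_{kl}$.) Since you call this ``the identity that makes everything collapse,'' the argument as written has a gap.

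The fix is to use the trace-free condition \emph{before} differentiating, not after. Set $r_{ij}:=\det A\cdot A^{ik}A^{jl}\hat q_{kl}$; a short algebraic computation using only $A^{ij}\hat q_{ij}=0$ and $\det(A^{ij})=1/\det A$ shows that, in \emph{any} coordinate chart and at \emph{every} point,
\[
r_{11}=-\hat q_{22},\qquad r_{12}=\hat q_{12},\qquad r_{22}=-\hat q_{11}.
\]
Since $K\hat q^{\sharp_A\sharp_A\flat_h\flat_h}_{\;ij}=(\det h)^{-1}h_{ia}h_{jb}r_{ab}$ and $\partial h|_0=0$ in normal coordinates, the equation $\operatorname{div}_h(Kp)=0$ at the origin becomes $\partial_j r_{jl}=0$, i.e. exactly $-\partial_1\hat q_{22}+\partial_2\hat q_{12}=0$ and $\partial_1\hat q_{12}-\partial_2\hat q_{11}=0$. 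No differentiation of the trace-free relation (and hence no appearance of $\partial A$ or the Codazzi equations) is needed; your plan to ``differentiate'' the trace-free condition would only introduce unnecessary derivatives of $A$.
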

	\begin{proof}
		See  \cite[p. 10]{li2016weyl}.
	\end{proof} 
	Let $\times:\mathbb{R}^3\to\mathbb{R}^3$ be the Euclidean cross product. Taking the cross product with the normal $\nu$ defines a linear map on the tangent bundle of $F(\bar{D})$. We define the $(1,1)$-tensor $Q$ via
	\begin{align*}
	Q=X_md^m=h^{ij}\hat q_{im} \nu\times \partial_jFd^m=\nu\times \iota_{dF}\hat q. 		\end{align*}
	Using the properties of the cross product, we find that
	\begin{align}
	X_1&=\frac{1}{\sqrt{\det(h)}}(-\hat q_{12}\partial_1F+\hat q_{11}\partial_2F), \label{A1}
	\\X_2&=\frac{1}{\sqrt{\det(h)}}(-\hat q_{22}\partial_1F+\hat q_{12}\partial_2F). \label{A2}
	\end{align}
	The next lemma is a variation of Lemma 7 in \cite{li2016weyl}. In its statement ant its proof, $\bar \nabla$ denotes the flat connection of $\mathbb{R}^3$. 
	\begin{lem}
		\label{maxprinciple}
		Let $Y\in C^1(\bar{D},\mathbb{R}^3)$ be a vector field satisfying $Y\cdot \partial_r F=0$ on $\partial D$ and 
		$$\omega=Q\cdot Y=X_i\cdot Y d^i.$$
		If $Y$ satisfies 
		\begin{eqnarray}\label{3.10}
		dF\cdot \bar{\nabla}Y=0
		\end{eqnarray}
		in the sense of symmetric $(0,2)$-tensors, then  $\omega= 0$. 
	\end{lem}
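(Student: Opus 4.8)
The plan is to show that the one-form $\omega = X_i \cdot Y\, d^i$ satisfies a first-order elliptic system together with a boundary condition that, via the maximum principle, forces $\omega$ to vanish identically. First I would compute $\operatorname{Sym}(\nabla_h \omega)$. Writing $\omega_i = X_i \cdot Y$ and using that $\nabla_h$ on $\mathbb{R}^3$-valued quantities is just the tangential part of $\overline{\nabla}$, I expect
\begin{align*}
\nabla_h \omega = (\nabla_h X_i)\cdot Y\, d^i \otimes(\cdot) + X_i \cdot (\overline{\nabla} Y)\, d^i\otimes(\cdot),
\end{align*}
where the second term, upon symmetrization, involves $\operatorname{Sym}(X_i \cdot \overline{\nabla}_j Y)$. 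Since the $X_i$ span the tangent plane of $F(\overline D)$ and $Y$ satisfies $dF \cdot \overline{\nabla} Y = 0$ in the symmetric sense, I would decompose $\overline{\nabla}_j Y$ into tangential and normal parts: the tangential part is controlled by the hypothesis $dF\cdot\overline\nabla Y = 0$, while the normal part contributes a term proportional to $\nu$, which is orthogonal to the $X_i$. Meanwhile, using the Codazzi-type symmetry of $\nabla_h \hat q$ from the preceding lemma (equations (\ref{a1der}), (\ref{a2der})) together with the Gauss-Weingarten relations for $F$, I would show that $\operatorname{Sym}(\nabla_h X_i \cdot Y\, d^i)$ and the corresponding trace combine so that $\omega$ satisfies an equation of the form $L(\omega) = 0$, i.e. $\operatorname{Sym}(\nabla_h \omega)$ is pure trace with respect to $A$ — in other words $\omega$ lies in the kernel of the elliptic operator $L$ from Lemma \ref{ellipticity}.

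Next I would handle the boundary. On $\partial D$ we have the free boundary condition $\partial_r F = EF$, and (\ref{A cond boundary}) gives $A_{r\varphi} = 0$ there. Using the explicit formulas (\ref{A1}), (\ref{A2}) for $X_1, X_2$ in isothermal polar coordinates, together with the hypothesis $Y \cdot \partial_r F = 0$ on $\partial D$, I would show that the relevant component of $\omega$ — namely $\omega(\mu)$, the contraction with the outward conormal $\mu$ parallel to $\partial_r$ — vanishes on $\partial D$. Concretely, $X_r$ (after lowering with $h$) is, up to the conformal factor, a multiple of $\hat q_{rr}\,\partial_\varphi F$ minus a multiple of $\hat q_{r\varphi}\,\partial_r F$; pairing with $Y$ and using $Y\cdot\partial_r F = 0$ leaves a term $\propto \hat q_{rr}\, Y\cdot\partial_\varphi F$, and here I would need the boundary condition $R_2^*(\hat q) = e^*\iota_\mu \hat q = 0$ coming from $\hat q \in \ker(\mathcal{L}^*)$, which gives $\hat q_{r\varphi} = 0$ on $\partial D$, and combine it with $A^{r\varphi}=0$ and the trace-free condition to pin down the needed boundary identity $\omega(\mu) = 0$ on $\partial D$.

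Finally, having established that $\omega$ is a solution of $L(\omega) = 0$ in $D$ with $\omega(\mu) = 0$ on $\partial D$, I would invoke a maximum principle / unique continuation argument for this elliptic first-order system — exactly in the spirit of Lemma 7 of \cite{li2016weyl} — to conclude $\omega \equiv 0$. One clean way: $L(\omega) = 0$ means $\operatorname{Sym}(\nabla_h\omega) = \frac12 \operatorname{tr}_A(\operatorname{Sym}\nabla_h\omega)\,A$, and contracting appropriately shows that $\omega^{\sharp}$ satisfies a Cauchy-Riemann-type system; writing $f = \omega_1 - i\omega_2$ in suitable coordinates, $f$ is a generalized analytic function vanishing in conormal direction on the boundary, hence $f \equiv 0$ by the similarity principle and boundary uniqueness for pseudo-holomorphic functions. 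Alternatively, pair $\mathcal{L}(\omega) = (0, 0)$ against $\omega$ itself through the integration-by-parts identity (\ref{boundary adjoint}) to get an $L^2$ identity that forces $\omega = 0$.

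\textbf{Main obstacle.} The routine-looking but genuinely delicate step is the interplay at the boundary: one must simultaneously use $\partial_r F = EF$, $A_{r\varphi} = 0$, the hypothesis $Y\cdot \partial_r F = 0$, and the adjoint boundary condition $e^*\iota_\mu\hat q = 0$ to reduce $\omega(\mu)\big|_{\partial D}$ to zero; getting all the conformal factors and index gymnastics in (\ref{A1})–(\ref{A2}) to conspire correctly is where the computation is least automatic. The interior computation that $L(\omega) = 0$ is also nontrivial — it is precisely where the Codazzi symmetry (\ref{a1der})–(\ref{a2der}) of $\hat q$ must be used, rather than merely the algebraic definition of $Q$ — but it is a direct analogue of the corresponding step in \cite{li2016weyl} and should go through with care.
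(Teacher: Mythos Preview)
Your overall strategy diverges from the paper's, and both the interior and the boundary steps contain genuine gaps.

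\textbf{Boundary.} You aim to show $\omega(\mu)=\omega_r=X_r\cdot Y=0$ on $\partial D$. But from (\ref{A2}) in polar coordinates one has (up to the conformal factor) $X_r=-\hat q_{rr}\,\partial_\varphi F+\hat q_{r\varphi}\,\partial_r F$, so using $\hat q_{r\varphi}=0$ and $Y\cdot\partial_r F=0$ you are left with $X_r\cdot Y\propto \hat q_{rr}\,\partial_\varphi F\cdot Y$. Neither $\hat q_{rr}$ nor $Y\cdot\partial_\varphi F$ has any reason to vanish (the trace-free condition only gives $A^{rr}\hat q_{rr}+A^{\varphi\varphi}\hat q_{\varphi\varphi}=0$). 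What \emph{does} vanish is the \emph{tangential} component: $\omega_\varphi=X_\varphi\cdot Y\propto \hat q_{\varphi\varphi}\,\partial_r F\cdot Y=0$. So the available boundary information is $R_2(\omega)=e^*\omega=0$, not $R_1(\omega)=\omega(\mu)=0$; this is the wrong boundary condition for the operator $\mathcal L=(L,R_1)$, and your final $L^2$-pairing suggestion via (\ref{boundary adjoint}) therefore does not close.

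\textbf{Interior.} The claim $L(\omega)=0$ is asserted but not established; the Codazzi-type symmetry (\ref{a1der})--(\ref{a2der}) together with $\operatorname{tr}_A\hat q=0$ and $dF\cdot\overline\nabla Y=0$ do not obviously force the symmetric trace-free (w.r.t.\ $A$) part of $\nabla_h\omega$ to vanish. What they \emph{do} force, by a short pointwise computation in normal principal coordinates, is the \emph{antisymmetric} part: $d\omega=0$. This is exactly how the paper proceeds: one shows $\omega$ is closed, hence exact on the disc, $\omega=d\zeta$; the correct boundary identity $\omega_\varphi=0$ then says $\zeta$ is constant on $\partial D$. One then checks (as in Lemma~7 of \cite{li2016weyl}) that $\zeta$ satisfies a second-order strongly elliptic scalar equation, and the ordinary strong maximum principle gives $\zeta\equiv\text{const}$, hence $\omega=d\zeta=0$. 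The passage to the scalar potential $\zeta$ is the key structural step you are missing.
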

	\begin{proof}
		We compute
		\begin{eqnarray}
		d\omega&=&\partial_j(X_i\cdot Y)d^j\wedge d^i \nonumber\\
		&=&(\bar{\nabla}_{\partial_j}X_i\cdot Y+X_i\cdot \bar{\nabla}_{\partial_j}Y)d^j\wedge d^i\nonumber\\
		&=&((\bar{\nabla}_{\partial_1}X_2-\bar{\nabla}_{\partial_2}X_1)\cdot Y+X_2\cdot \bar{\nabla}_{\partial_1}Y-X_1\cdot \bar{\nabla}_{\partial_2}Y)d^1\wedge d^2\nonumber.
		\end{eqnarray}
		We fix a point $p\in D$ and choose normal coordinates centred at $p$.
		Equation \eqref{3.10} reads
		$$
		\partial_1F\cdot\bar{\nabla}_{\partial_1} Y=\partial_2F\cdot\bar{\nabla}_{\partial_2} Y=\partial_1F\cdot\bar{\nabla}_{\partial_2} Y+\partial_2F\cdot\bar{\nabla}_{\partial_1} Y=0.
		$$
		Together with $(\ref{A1})$ and $(\ref{A2})$, this implies that
		$$X_2\cdot \bar{\nabla}_{\partial_1}Y-X_1\cdot \bar{\nabla}_{\partial_2}Y=\hat q_{12}\partial_2 F\cdot \bar{\nabla}_{\partial_1}Y+\hat q_{12}\partial_1F\cdot \bar{\nabla }_{\partial_2}Y=0.$$ Moreover, we may choose the direction of the normal coordinates to be principal directions, that is, $A_{12}$=0. Using (\ref{A1}) and (\ref{A2}), we then find
		\begin{align*}
		\bar{\nabla}_{\partial_1}X_2-\bar{\nabla}_{\partial_2}X_1&=(-\partial_1\hat q_{22}+\partial_2\hat q_{12})\partial_1F+(\partial_1\hat q_{12}-\partial_2\hat q_{11})\partial_2F
		-(\hat q_{22}A_{11}+\hat q_{11}A_{22})\nu=0
		\end{align*}
		where we have used (\ref{a1der}), (\ref{a2der}) and $\operatorname{tr}_A(\hat q)=0$. It follows that  $\omega$ is closed. Since the disc is contractible, $\omega$ is also exact and consequently, there exists a function $\zeta\in C^{2,\alpha}(\bar{D})$ satisfying 
		$$
		\omega=d\zeta=\partial_l\zeta d^l.
		$$
		This implies that $\partial_l\zeta=X_l\cdot Y$. As $R^*_2(\hat q)=0$, we have $\hat q_{r\varphi}=0$ on $\partial D $ where $\varphi,r$ denote isothermal polar coordinates. It follows that $$\partial_\varphi\zeta=X_\varphi\cdot Y= \frac{1}{\sqrt{\det(h)}}\hat q_{\varphi\varphi}\partial_rF\cdot Y=0$$ on $\partial D$. In particular, $\zeta$ is constant on $\partial D$. We can now argue as in the proof of Lemma 7 in \cite{li2016weyl} to show that $\zeta$ satisfies a strongly elliptic equation with bounded coefficients. The  maximum principle implies that $\zeta$ attains its maximum and minimum on the boundary. Since $\zeta$ is constant on $\partial D$, $\zeta$ is constant on all of $\bar{D}$ and consequently $\omega=d\zeta=0$.
	\end{proof}
We now prove the following existence result.
	\begin{lem}
		Let $\tilde q\in C^{k-1,\alpha}(\bar{D},\operatorname{Sym}(T^*D\otimes T^*D))$ and $\psi\in C^{k,\alpha}(\partial D)$. Then there exists a map $\Psi\in C^{k,\alpha}(\bar{D},\mathbb{R}^3)$ which satisfies
		\begin{align*}
		d\Psi\cdot dF= \tilde q \text{ in } D, \\
		\Psi\cdot F =	\psi \text{ on } \partial D.
		\end{align*}	
		Moreover, the one-form $w=u_id^i$ where $u_i=\Psi\cdot \partial_iF$ satisfies the estimate
		\begin{align}
		|w|_{C^{1,\alpha}(\bar D)}\leq c( |\tilde q|_{C^{0,\alpha}(\bar D)}+|\psi|_{C^{1,\alpha}(\partial D)}+|w|_{C^0(\bar D)}),
		\label{aprioriest}
		\end{align}
		where $c$ depends on $|h|_{C^{2}(\bar{D})}$ as well as $|A|_{C^{1}(\bar{D})}$.
		\label{existlem}
	\end{lem}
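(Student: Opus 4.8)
The plan is to set up the problem so that Lemma \ref{Fredholm} applies and then eliminate the obstruction using the maximum principle result of Lemma \ref{maxprinciple}. First I would reduce the system $d\Psi\cdot dF=\tilde q$, $\Psi\cdot F=\psi$ to the one-form equation for $w=u^id^i$ exactly as in the discussion preceding Lemma \ref{ellipticity}: writing $v=\tfrac12\operatorname{tr}_A(\tilde q-\operatorname{Sym}(\nabla_h w))$, the first three lines of \eqref{linearized equations} are equivalent to $L(w)=q$ where $q:=\tilde q-\tfrac12\operatorname{tr}_A(\tilde q)A\in\mathcal{A}^{k-1,\beta}_A$, and the boundary condition $F\cdot\Psi=\psi$ on $\partial D$ becomes $E\,w(\partial_r)=\psi$, i.e. $R_1(w)=\tilde\psi$ for a suitable rescaling $\tilde\psi\in C^{k,\beta}(\partial D)$ of $\psi$ (using $\mu^{\flat_h}$ proportional to $d^r$ and $\partial_r F= EF$). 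Thus I seek $w$ with $\mathcal{L}(w)=(q,\tilde\psi)$, which by Lemma \ref{Fredholm} exists if and only if $\langle q,\hat q\rangle_{\mathcal{A}_A}+\int_{\partial D}\tilde\psi\,R_1^*(\hat q)\,\mathrm{d}vol_h=0$ for every $\hat q\in\ker(\mathcal{L}^*)$.

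The crux is therefore to show $\ker(\mathcal{L}^*)=\{0\}$, for then the compatibility condition is vacuous and solvability follows unconditionally. Given $\hat q\in\ker(\mathcal{L}^*)$, I would invoke the previous two lemmas: $\nabla_h\hat q$ is totally symmetric and the associated $(1,1)$-tensor $Q=X_l d^l$ from \eqref{A1}, \eqref{A2} is available. The standard trick, following Li and Wang, is to apply Lemma \ref{maxprinciple} with cleverly chosen constant or linear vector fields $Y\in\mathbb{R}^3$. For a constant vector $Y=e_j$ one has $\overline{\nabla}Y=0$, so \eqref{3.10} holds trivially; but one must also arrange $Y\cdot\partial_r F=0$ on $\partial D$, which constant vectors do not satisfy. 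The fix is to use the position vector field or, more precisely, combinations adapted to the free boundary condition $\partial_r F= EF$: since $F$ is tangential and $\partial_\varphi F$ is tangential along $\partial D$, a vector field of the form $Y=c\times F$ for constant $c$ satisfies $Y\cdot\partial_r F= E\,(c\times F)\cdot F=0$ on $\partial D$, and $\overline{\nabla}Y=c\times(\cdot)$ acting on $dF$ gives $dF\cdot\overline{\nabla}Y$ which is antisymmetric, hence its symmetrization vanishes, so \eqref{3.10} holds. Lemma \ref{maxprinciple} then forces $Q\cdot(c\times F)\equiv 0$, i.e. $X_i\cdot(c\times F)=0$ for all $i$ and all constant $c$. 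Since $X_i=\tfrac{1}{\sqrt{\det h}}(-\hat q_{i2}\partial_1F+\hat q_{i1}\partial_2F)$ lies in the tangent plane $\operatorname{span}\{\partial_1F,\partial_2F\}$, and $F$ together with $\partial_1F,\partial_2F$ spans $\mathbb{R}^3$ away from the boundary (by strict convexity $F$ is transverse to its tangent planes), varying $c$ over $\mathbb{R}^3$ gives that the tangential part of $X_i$ is orthogonal to the whole tangent plane, hence $X_i\equiv 0$, hence $\hat q\equiv 0$ on $D$ and by continuity on $\overline D$.

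With $\ker(\mathcal{L}^*)=\{0\}$ established, Lemma \ref{Fredholm} yields a one-form $w\in\mathcal{B}^{1,\beta}_A$ with $\mathcal{L}(w)=(q,\tilde\psi)$ together with the estimate $|w|_{C^{1,\beta}(\overline D)}\leq c(|q|_{C^{0,\beta}(\overline D)}+|\tilde\psi|_{C^{1,\beta}(\partial D)}+|w|_{C^{0}(\overline D)})$, and since $q$ is controlled by $\tilde q$ and $\tilde\psi$ by $\psi$ and $E$ (hence by $|h|_{C^2}$), this gives \eqref{aprioriest}. The bootstrap clause of Lemma \ref{Fredholm} upgrades regularity: $q\in C^{k-1,\beta}$ and $\tilde\psi\in C^{k,\beta}$ give $w\in C^{k,\beta}(\overline D)$. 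Finally I reconstruct $\Psi$ from $w$ and $v$: setting $u_i=w(\partial_i)$ and $v=\tfrac12\operatorname{tr}_A(\tilde q-\operatorname{Sym}(\nabla_h w))\in C^{k-1,\beta}$, define $\Psi:=u_i h^{ij}\partial_j F+v\,\nu$; one checks directly that $d\Psi\cdot dF=\operatorname{Sym}(\nabla_h w)+vA = \tilde q$ (the second equality being how $q$, $v$ were chosen) and that $\Psi\cdot F=E^{-2}u_r\,\partial_r F\cdot F=\psi$ on $\partial D$, and $\Psi$ inherits class $C^{k,\beta}$ since $\nu\in C^{k,\beta}$ as $F\in C^{k+1,\beta}$. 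The main obstacle is the vanishing-of-the-kernel step: one has to be careful both in selecting vector fields $Y$ that simultaneously satisfy the free-boundary compatibility $Y\cdot\partial_r F=0$ and the Killing-type condition \eqref{3.10}, and in arguing afterwards that the resulting pointwise relations actually pin down $\hat q$ — the role of strict convexity (so that $F$ is transverse to its tangent planes, guaranteeing $\{F,\partial_1F,\partial_2F\}$ spans $\mathbb{R}^3$) is essential here and is where the hypothesis $K_h>0$ is really used.
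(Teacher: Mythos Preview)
Your overall strategy is exactly the paper's: reduce to $\mathcal{L}(w)=(q,\tilde\psi)$, invoke Lemma~\ref{Fredholm}, and prove $\ker(\mathcal{L}^*)=\{0\}$ by applying Lemma~\ref{maxprinciple} with the rotation vector fields $Y_i=e_i\times F$. The verification that these $Y_i$ satisfy both $Y_i\cdot\partial_rF=0$ on $\partial D$ and the symmetrized condition \eqref{3.10} is correct, and the reconstruction of $\Psi$ from $(w,v)$ is fine.

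The genuine gap is in your concluding linear-algebra step. From $X_i\cdot(c\times F)=0$ for all $c$ you correctly deduce $X_i\parallel F$; combined with $X_i$ being tangential this forces $X_i=0$ \emph{provided} $F\cdot\nu\neq 0$. You assert this holds in the interior ``by strict convexity,'' but strict convexity alone does not imply that the position vector is transverse to the tangent plane---indeed $F\cdot\nu$ vanishes identically on $\partial D$, and nothing a priori prevents interior zeros. The paper spends real effort here: it uses Gauss--Bonnet to get $\operatorname{Length}(\partial D)<2\pi$, then Crofton's formula to conclude $F(\partial D)$ avoids some great circle, rotates so that $F(\partial D)$ lies in the open upper hemisphere, argues via convexity that $F(\overline D)$ lies above the cone $\{tF(p):t\in[0,1],\,p\in\partial D\}$, and finally observes that an interior point with $F\cdot\nu=0$ would force the tangent plane there to separate $F(\overline D)$ from part of $F(\partial D)$, a contradiction. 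This is the step where the free boundary condition and the curvature hypotheses are genuinely combined, and it cannot be replaced by a one-line appeal to convexity.
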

	\begin{proof}
		We have seen that it is sufficient to prove the existence of a smooth one-form $w$ solving $\mathcal{L}(w)=(q,\psi)$, where $q=\tilde q-\frac12 \operatorname{tr}_A(\tilde q)A$. $w$ then uniquely determines the map $\Psi$. Moreover, there holds $|q|_{C^{0,\alpha}(\bar{D})}\leq c |\tilde q|_{C^{0,\alpha}(\bar{D})}$, where $c$ solely depends on $|A|_{C^1(\bar{D})}$. Hence, according to Lemma \ref{Fredholm} it suffices to show that $\ker{(\mathcal{L}^*)}$ is empty.\\ \indent Let $\hat q\in\ker(\mathcal{L}^*)$ and $Q$ be defined as above. Denote the standard basis of $\mathbb{R}^3$ by $\{e_1,e_2,e_3\}$ and define the vector fields $Y_a=e_a\times F$ for $a\in\{1,2,3\}$. Since $\partial_rF=EF$ on $\partial D$, there holds $Y_a\cdot \partial_rF=0$ on $\partial D$. We denote the group of orthogonal matrices by $O(3)$. It is well-known that $T_{\operatorname{Id}}(O(3))=\operatorname{Skew}(3)$ which is the space of skew-symmetric matrices. The operator $Y\mapsto e_a\times Y$ is skew-symmetric and it follows that there exists a $C^1$-family of orthogonal matrices $\mathcal{I}(t)$, where $t\in(-\epsilon,\epsilon)$ for some $\epsilon>0$, such that $\mathcal{I}(0)=\operatorname{Id}$ and $\mathcal{I}'(0)(F)=Y_a$. In particular, every $\mathcal{I}(t)(F)$ is a solution to the free boundary value problem (\ref{fbp}) for the metric $h$. It follows that
		\begin{align*}
		0=\frac12\frac{d}{dt} d\mathcal{I}(t)(F)\cdot d\mathcal{I}(t)(F)\bigg|_{t=0}=dY_a\cdot dF=\bar{\nabla}Y_a\cdot dF	
		\end{align*}  	
		as the connection $\bar{\nabla}$ and the vector-valued differential $d$ on $\mathbb{R}^3$ can be identified with each other. Hence the hypotheses of Lemma \ref{maxprinciple} are satisfied and $Y_a\cdot Q $ vanishes identically on $D$. Clearly, $Q\cdot \nu= 0$ and $Q=0$ implies $\hat q=0$. Thus, it suffices to show that $$\operatorname{span}\{\nu,Y_1,Y_2,Y_3\}=\mathbb{R}^3 $$on  $D$. \\ \indent  At any point $p\in D$, the vector fields $Y_1, Y_2, Y_3$ span the space $F(p)^\perp$ so we need to show that $\nu\notin F(p)^\perp$ on  $D$. Applying the Gauss-Bonnet formula we obtain
		\begin{align*}
		\operatorname{Length}(\partial D)+\int_D K\,\text{d}{vol}_h=2\pi,	
		\end{align*} 
		that is, $\operatorname{Length}(\partial D)< 2\pi$.  According to the Crofton formula, we have
		\begin{align*}
		\operatorname{Length}(F(\partial D))=\frac14\int_{\mathbb S^2} \#(F(\partial D)\cap \xi^\perp)\,d\xi.	
		\end{align*}
		The set of great circles which intersect $F(\partial D)$ exactly once is of measure zero. It follows that $F$ has to avoid a great circle and, after a rotation, we may assume that $F(\partial D)$ is contained in the upper hemisphere. Now, the strict convexity of $F$ implies that $F$ lies above the cone $$C=\{tF(p):t\in[0,1], p\in\partial D\}$$ and only touches $C$ on the boundary. On $\partial D$, there holds $\nu\cdot F=0$. If there was an interior point $p$ such that $\nu(p)\cdot F(p)=0$, then  the tangent plane $T_{F(p)}{F(\bar{D})}$ would meet a part of $\mathbb{S}^2$ above the cone $C$. However,  strict convexity then implies that $F$ has to lie on one side of that tangent plane. This contradicts $F(\partial D)=\partial C$. 
	\end{proof}	
	By induction, we are now able to find $C^2$-solutions $\Psi^l$ to the first four lines of (\ref{linearized equations}) which however do not yet satisfy the condition
	\begin{align*}
	\partial_r \Psi^l =\sum_{i=0}^{l}\binom{l}{i}\partial_t^iE_t\Psi^{l-i}\bigg|_{t=0} \text{ on } \partial D.	
	\end{align*} 
	We now show that this additional boundary condition is implied by the constancy of the geodesic curvature in the $t$-direction.
	\begin{lem}
		Let $\Psi^l$, $l\in\mathbb{N}$, be $C^2$-solutions of
		\begin{align}
		\begin{dcases}
		& dF\cdot d\Psi^l+\frac12 \sum_{i=1}^{l-1}\binom{l}{i}d\Psi^{l-i}\cdot d\Psi^{i}=\frac{1}{2}\operatorname{Id}\sum_{i=0}^l\binom{l}{i}\partial_t^iE_t\partial_t^{l-i}E_t\big|_{t=0} \text{ in } D,\\
		& F\cdot \Psi^l+\frac12 \sum_{i=1}^{l-1}\binom{l}{i}\Psi^{l-i}\cdot \Psi^{i}=0 \text{ on } \partial D.
		\label{equation improved bc}
		\end{dcases}	 	
		\end{align}
		Then there also holds 
		\begin{align*}
		\partial_r \Psi^l =\sum_{i=0}^{l}\binom{l}{i}\partial_t^iE_{t}\Psi^{l-i}\bigg|_{t=0} \text{ on } \partial D.	
		\end{align*}
		\label{improvedbc}	
	\end{lem}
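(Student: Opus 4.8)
The plan is to exploit the constancy of the geodesic curvature $k_{h_t} \equiv 1$ along $\partial D$, which holds for every $t$ by construction of the connecting path in Lemma \ref{pathconnect}. Recall from the proof of that lemma the formula $k_{h_t} = -\frac{1}{r}\partial_r\big((rE_t)^{-1}\big)\big|_{r=1}$, and recall the expression $d F_t \cdot d F_t = h_t$ and $|F_t|^2 = 1$ on $\partial D$ which, by Lemma \ref{existence criterion}, we already know holds \emph{provided} we can run the induction — but here we only know the first two lines of \eqref{equation improved bc}, i.e.\ the metric equation in $D$ and the sphere condition on $\partial D$. So the strategy is: (i) differentiate the identity $|F_t|^2 = 1$ tangentially along $\partial D$ to get $F_t \cdot \partial_\varphi F_t = 0$, hence $\partial_\varphi F_t \cdot \partial_\varphi F_t = - F_t \cdot \partial_\varphi^2 F_t$; (ii) use the metric equation restricted to $\partial D$ together with the geodesic curvature formula to pin down the radial component $\partial_r F_t \cdot \partial_r F_t = E_t^2$ and the mixed component $\partial_r F_t \cdot \partial_\varphi F_t = 0$ along $\partial D$; (iii) combine these to show that the vector $\partial_r F_t - E_t F_t$ is orthogonal to the frame $\{\partial_\varphi F_t, F_t\}$ spanning the tangent plane of $\mathbb{S}^2$ at $F_t(\partial D)$, and also has vanishing normal component, whence it is zero. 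Finally, Taylor-expanding the vanishing of $\partial_r F_t - E_t F_t$ at $t=0$ and using the Leibniz formula from the first lemma of Section 3 yields exactly $\partial_r \Psi^l = \sum_{i=0}^l \binom{l}{i} \partial_t^i E_t\, \Psi^{l-i}\big|_{t=0}$.

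The delicate point is that to carry out (ii) one needs that the metric equation $dF_t \cdot dF_t = h_t$ actually holds on $\partial D$, but a priori the hypotheses of Lemma \ref{improvedbc} only assert that the $\Psi^l$ solve \eqref{equation improved bc}; so I would run this as an induction on $l$ in tandem with the conclusion of Lemma \ref{existence criterion}. Concretely, assume inductively that for all $m < l$ the full boundary condition $\partial_r \Psi^m = \sum_i \binom{m}{i}\partial_t^i E_t \Psi^{m-i}|_{t=0}$ holds, so that up to order $l-1$ all three lines of \eqref{linearizedequations} are satisfied; then the Taylor polynomial $\sum_{m \le l-1} \Psi^m t^m/m!$ together with the $l$-th term satisfies the metric and sphere conditions to order $l$, and in particular the $l$-th $t$-derivative at $0$ of $dF_t \cdot dF_t - h_t$ vanishes, which is precisely the first line of \eqref{equation improved bc}. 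From here the geometric argument of (i)–(iii) is a pointwise identity on $\partial D$ that only uses the $C^2$ data of $F_t$ up to order $l$, so it produces the $l$-th order boundary condition and closes the induction.

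For the core computation in (iii), the identity $\partial_\varphi \partial_r F_t \cdot \nu_t = \partial_\varphi(E_t F_t)\cdot \nu_t = 0$ from \eqref{A cond boundary} (valid because $F_t, \partial_\varphi F_t$ are tangent to $\mathbb{S}^2$) is the key structural input showing the mixed second fundamental form component vanishes; dualizing, the off-diagonal metric component $h_t(\partial_r, \partial_\varphi) = 0$ on $\partial D$ is automatic from the isothermal form of $h_t$, so consistency forces $\partial_r F_t \cdot \partial_\varphi F_t = 0$. Then decompose $\partial_r F_t = a\, F_t + b\, \partial_\varphi F_t / |\partial_\varphi F_t|^2 + c\, \nu_t$ in the orthogonal frame along $\partial D$: the sphere condition differentiated radially or the geodesic curvature formula forces $c = 0$, the mixed computation forces $b = 0$, and matching $|\partial_r F_t|^2 = E_t^2$ forces $a = E_t$ (up to sign, fixed by continuity from $t=0$). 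Thus $\partial_r F_t = E_t F_t$ on $\partial D$ identically in $t$, and the result follows by differentiating in $t$. The main obstacle I anticipate is bookkeeping the algebra so that the geodesic curvature constraint is correctly converted into the scalar relation fixing the coefficient $a$; this is the "lengthy algebraic computation" the introduction alludes to, and it must be done carefully because all three components $a,b,c$ are genuinely coupled through the second-order terms in the metric equation at the boundary.
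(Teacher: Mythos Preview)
Your overall strategy coincides with the paper's: argue by induction on $l$, decompose the desired identity $\partial_r F_t=E_tF_t$ on $\partial D$ into three scalar components, dispatch the tangential ones using the metric equation and the $\varphi$-derivative of the sphere condition, and reserve the geodesic curvature identity $k_{h_t}\equiv 1$ for the remaining (normal) component. That is exactly what the paper does, working in the fixed frame $\{F,\partial_\varphi F,\nu\}$ at $t=0$ and establishing the three equations (\ref{bc1}), (\ref{bc2}), (\ref{bc3}) in turn.

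However, your execution contains circularities and one non-step. First, the frame $\{F_t,\partial_\varphi F_t,\nu_t\}$ is \emph{not} orthogonal unless $F_t\cdot\nu_t=0$, and that relation is equivalent (given $|F_t|=1$, $F_t\perp\partial_\varphi F_t$, and $|\partial_rF_t|=E_t$) to the free boundary condition you are trying to prove; so you cannot assume it in setting up the decomposition. For the same reason your invocation of (\ref{A cond boundary}) for $F_t$ is circular: the computation $\partial_\varphi\partial_rF_t\cdot\nu_t=\partial_\varphi(E_tF_t)\cdot\nu_t$ already uses $\partial_rF_t=E_tF_t$. Second, ``the sphere condition differentiated radially'' is unavailable: $|F_t|^2=1$ holds only on $\partial D$, so no $r$-derivative exists. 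Third, ``matching $|\partial_rF_t|^2=E_t^2$ forces $a=E_t$'' is only true once $c=0$ and $F_t\cdot\nu_t=0$ are known; before that you only get $a^2+2ac(F_t\cdot\nu_t)+c^2=E_t^2$.

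The substantive content of the lemma is precisely the step you wave at: converting $k_{h_t}\equiv 1$ into $c=0$. The paper does this by writing $\zeta=rE_t$, so that $\partial_r\zeta=\zeta^2$ on $\partial D$, differentiating this $l$ times in $t$, and comparing with the $r$-derivative of the $(\varphi,\varphi)$-component of the metric equation $\partial_\varphi F_t\cdot\partial_\varphi F_t=\zeta^2$. After a lengthy manipulation that repeatedly feeds in the induction hypothesis and the twice-$\varphi$-differentiated sphere condition, everything cancels except
\[
A_{\varphi\varphi}\,\nu\cdot\Big(\partial_r\Psi^l-\sum_{i=0}^{l}\tbinom{l}{i}\partial_t^iE_t\,\Psi^{l-i}\Big)\Big|_{t=0}=0,
\]
and strict convexity ($A_{\varphi\varphi}>0$) yields the normal component. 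This algebra is the heart of the proof and cannot be bypassed; your proposal identifies where it must go but does not supply it.
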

	\begin{proof} We choose isothermal polar coordinates $(\varphi,r)$ centred at the origin and abbreviate $\partial_1=\partial_\varphi$ as well as $\partial_2=\partial_r$. Differentiating the boundary condition in (\ref{equation improved bc}) tangentially we find 
		\begin{align}
		\label{diffbc}
		\partial_1F\cdot \Psi^l+F\cdot \partial_1\Psi^l+\frac12 \sum_{i=1}^{l-1}\binom{l}{i}\big(\partial_1\Psi^{l-i}\cdot \Psi^{i}+\Psi^{l-i}\cdot \partial_1\Psi^{i})=0.
		\end{align}	
		At every point $p\in\partial D$, we need to show the following three equations
		\begin{align}
		\partial_2 \Psi^l\cdot F &=\bigg(\partial_t^lE_t+\sum_{i=0}^{l-2}\binom{l}{i}\partial_t^iE_{t}\Psi^{l-i}\cdot F \bigg)\bigg|_{t=0} \label{bc1}, \\
		\partial_2 \Psi^l\cdot \partial_1F &=\sum_{i=0}^{l-1}\binom{l}{i}\partial_t^iE_t\Psi^{l-i}\cdot \partial_1F\bigg|_{t=0} \label{bc2}, \\
		\partial_2 \Psi^l\cdot \nu &=\sum_{i=0}^{l-1}\binom{l}{i}\partial_t^iE_t\Psi^{l-i}\cdot \nu \bigg|_{t=0} \label{bc3},
		\end{align}
		where we have used that $|F|^2=1$, $F\cdot\Psi^1=0$ and $F\cdot \partial_1F=F\cdot \nu=0$ on $\partial D$. We prove the statement by induction and start with $l=1$. Since $\partial_2 F=EF$ we find
		\begin{align*}
		\partial_2\Psi^1\cdot F=\frac1E\partial_2\Psi^1\cdot \partial_2F=\partial_tE_{t}\big|_{t=0}
		\end{align*}
		by the differential equation (\ref{equation improved bc}). This proves (\ref{bc1}). Next, we have
		\begin{align*}
		\partial_2\Psi^1\cdot \partial_1F=-\partial_1\Psi^1\cdot \partial_2F=-E\partial_1\Psi^1\cdot F=E\Psi^1\cdot \partial_1F
		\end{align*}
		where we have used the equation (\ref{equation improved bc}), the free boundary condition of $F$ and (\ref{diffbc}). This proves (\ref{bc2}).
		Now let $\zeta=rE_t$. Since $k_h(\partial D)= 1$ for all $t\in[0,1]$, there holds $1=-\partial_2(\zeta^{-1})$ on $\partial D$.  Differentiating in time, we obtain 
		\begin{align*}
		0=-\partial_t\partial_2\bigg(\frac{1}{\zeta}\bigg)=\partial_2\bigg(\frac{\partial_t\zeta}{\zeta^2}\bigg)=\bigg(\frac{\partial_t\partial_2\zeta}{\zeta^2}-2\frac{\partial_t\zeta\partial_2\zeta}{\zeta^3}\bigg).
		\end{align*}
		Since $\partial_2 \zeta=\zeta^2$ at $t=0$, there holds $$\zeta\partial_t\partial_2\zeta=2\zeta^2\partial_t\zeta.$$ According to (\ref{equation improved bc}), we have $\zeta\partial_t\zeta=\partial_1F\cdot\partial_1\Psi^1$ at $t=0$. Differentiating yields 
		$$\zeta\partial_t\partial_2\zeta+\zeta^2\partial_t \zeta=\partial_1\partial_2F\cdot \partial_1\Psi^1+\partial_1F\cdot \partial_1\partial_2\Psi^1$$ at $t=0$. Using that $\zeta=E_t$ on $\partial D$, $E_t=E$ for $t=0$ and $$\partial_1\partial_2F\cdot \partial_1\Psi^1=\partial_1(EF)\cdot\partial_1\Psi^1=\partial_1EF\cdot \partial_1\Psi^1+E\partial_1F\cdot\partial_1\Psi=\partial_1EF\cdot \partial_1\Psi^1+E^2\partial_tE_{t}\big|_{t=0},$$ we obtain 
		\begin{align}
		\partial_1F\cdot \partial_1\partial_2\Psi^1=2\partial_tE_{t}E^2\big|_{t=0}-\partial_1EF\cdot 
		\partial_1\Psi^1. \label{3bc ident 1}
		\end{align} 
		We compute
		\begin{align*}
		\partial_1\partial_2\Psi^1\cdot \partial_1F&=\partial_1(\partial_2\Psi^1\cdot \partial_1F)-\partial_2\Psi^1\cdot \partial_1\partial_1F \\	
		&=-\partial_1(\partial_1\Psi^1\cdot \partial_2F)-\partial_2\Psi^1\cdot \partial_1\partial_1F\\
		&=-\partial_1(\partial_1\Psi^1\cdot EF)-\partial_2\Psi^1\cdot \partial_1\partial_1F\\
		&=\partial_1(\Psi^1\cdot E\partial_1F)-\partial_2\Psi^1\cdot \partial_1\partial_1F\\
		&=\partial_1E \Psi^1\cdot \partial_1F +E\Psi^1 \cdot \partial_1\partial_1F +E \partial_1F\cdot \partial_1\Psi^1-\partial_2\Psi^1\cdot \partial_1\partial_1F\\
		&=-\partial_1E \partial_1\Psi^1\cdot F +E\Psi^1 \cdot \partial_1\partial_1F +E^2\partial_tE_{t}\big|_{t=0}-\partial_2\Psi^1\cdot \partial_1\partial_1F
		\end{align*}
		which together with (\ref{3bc ident 1}) implies that
		\begin{align}
		\partial_1\partial_1F\cdot ({-\partial_2\Psi^1+E\Psi^1})-\partial_tE_{t}E^2\big|_{t=0}=0.	
		\end{align}
		A calculation shows that $\partial_1\partial_1F=-A_{11}\nu+\partial_1EE^{-1}\partial_1F-E\partial_2F$. Furthermore, we have already shown that 
		$$
		-\partial_2\Psi^1+E\Psi^1=-\partial_tE_{t}\big|_{t=0}F+\nu\cdot(-\partial_2\Psi^1+E\Psi^1+\partial_tE_{t}F\big|_{t=0})\nu =-\partial_tE_{t}\big|_{t=0}F+\nu\cdot(-\partial_2\Psi^1+E\Psi^1) \nu.
		$$
		Together with the above, $F\cdot \partial_2F=E$ and $F\cdot\partial_1F=0$, this implies that
		$$
		A_{11}(\partial_2\Psi^1-E\Psi^1)\cdot \nu=0.
		$$
		This proves the claim since $A_{11}>0$ and $E=E_0$.\\ \indent  Now, let $l>1$ and suppose that the assertion has already been shown for every $i<l$.  At $t=0$, there holds
		\begin{align*}
		\partial_2\Psi^l\cdot F\notag =&\frac1E\partial_2\Psi^l\cdot \partial_2F\\
		=&\frac{1}{2E}\bigg(\sum_{i=0}^l \binom{l}{i}\partial_t^iE_{t}\partial_t^{l-i}E_{t}-\sum_{i=1}^{l-1}\binom{l}{i}\partial_2\Psi^{l-i}\cdot\partial_2\Psi^i\bigg)\\	
		=&\frac{1}{2E}\bigg(\sum_{i=0}^l \binom{l}{i}\partial_t^iE_{t}\partial_t^{l-i}E_{t}-\sum_{i=1}^{l-1}\binom{l}{i}\bigg[\sum_{j=0}^{l-i}\binom{l-i}{j}\partial_t^{j}E_{t}\Psi^{l-i-j}\bigg]\cdot\bigg[\sum_{m=0}^{i}\binom{i}{m}\partial_t^{m}E_{t}\Psi^{i-m}\bigg]\bigg)\\
		=&\frac{1}{2E}\bigg(2E\partial_t^{l}E_{t}-2\sum_{i=1}^{l-1}\binom{l}{i}\partial_t^{l-i}E_{t}\sum_{j=0}^{i-1}\binom{i}{j}\partial_t^{j}E_{t}F\cdot\Psi^{i-j}
		\\&-\sum_{i=1}^{l-1}\binom{l}{i} \sum_{j=0}^{l-i-1}\sum_{m=0}^{i-1}\binom{l-i}{j}\binom{i}{m}\partial_t^{j}E_{t}\partial_t^{m}E_{t}\Psi^{l-i-j}\cdot\Psi^{i-m}\bigg)
		\\=&\frac{1}{2E}\bigg(2E\partial_t^{l}E_{t}-2\sum_{i=1}^{l-1}\sum_{j=0}^{l-i-1}\binom{l}{i+j}\binom{i+j}{j}\partial_t^{i}E_{t}\partial_t^{j}E_{t}F\cdot\Psi^{l-i-j}
		\\& - \sum_{i=0}^{l-2}\sum_{j=0}^{l-2-i}\binom{l}{i+j}\binom{i+j}{j}\partial_t^{i}E_{t}\partial_t^{j}E_{t}\sum_{m=1}^{l-1-i-j}\binom{l-i-j}{m}\Psi^{l-i-j-m}\cdot\Psi^m \bigg)
		\\=&\frac{1}{2E}\bigg(2E\partial_t^{l}E_{t}-2\sum_{i=1}^{l-1}\binom{l}{i+l-i-1}\binom{i+l-i-1}{l-i-1}\partial_t^{i}E_{t}\partial_t^{l-i-1}E_{t}F\cdot\Psi^{l-i-(l-i-1)}
		\\& + 2\sum_{j=0}^{l-2}\binom{l}{j}\binom{j}{0}E_{t}\partial_t^{j}E_{t}F\cdot\Psi^{l-j} \bigg)
		\\=& \partial_t^{l}E_{t}+\sum_{j=0}^{l-2}\binom{l}{j}\partial_t^{j}E_{t}F\cdot\Psi^{l-j},
		\end{align*}	
		which proves (\ref{bc1}).  In the first equation, we used that $F$ satisfies the free boundary condition. In the second equation, we used the $(2,2)$-component of the differential equation for $\Psi^l$. In the third equation, we used that the claim holds for every $i<l$. The fourth equation follows from extracting the terms involving $F$ and using $|F|^2=1$ and $\Psi^0=F$. The fifth equation follows from rearranging the sums, changing indices and rewriting the binomial coefficients. In the sixth equation, we used the boundary condition for each $\Psi^{l-i-j-m}$ and cancel the terms which appear twice. Finally, we used that $\Psi^1\cdot F=0$. We proceed to show (\ref{bc2}). We have
		\begin{align*}
		\partial_2\Psi^l\cdot \partial_1F=&-\partial_1\Psi^l\cdot \partial_2F -\frac12\sum_{i=1}^{l-1}\binom{l}{i}\bigg(\partial_1\Psi^{l-i}\cdot\partial_2\Psi^i+\partial_2\Psi^{l-i}\cdot\partial_1\Psi^i\bigg) \\
		=&-E\partial_1\Psi^l\cdot F-\frac12 \sum_{i=1}^{l-1}\binom{l}{i}\bigg(\partial_1\Psi^{l-i}\cdot\partial_2\Psi^i+\partial_2\Psi^{l-i}\cdot\partial_1\Psi^i\bigg) \\
		=&E\Psi^l\cdot \partial_1F+\frac E2\sum_{i=1}^{l-1}\binom{l}{i}\bigg(\partial_1\Psi^{l-i}\cdot\Psi^{i}+\Psi^{l-i}\cdot\partial_1\Psi^{i}\bigg) \\
		&-\frac12\sum_{i=1}^{l-1}\binom{l}{i}\bigg(\sum_{j=0}^{i}\binom{i}{j}\partial_t^{j}E_{t}\partial_1\Psi^{l-i}\cdot\Psi^{i-j}+\sum_{j=0}^{l-i}\binom{l-i}{j}\partial_t^{j}E_{t}\partial_1\Psi^{i}\cdot\Psi^{l-i-j}\bigg) \\
		=&E\Psi^l\cdot \partial_1F-\frac12\sum_{i=1}^{l-1}\binom{l}{i}\partial_t^{i}E_{t}\bigg(\sum_{j=1}^{i}\binom{i}{j}\partial_1\Psi^{l-i}\cdot\Psi^{i-j}+\sum_{j=1}^{l-i}\binom{l-i}{j}\partial_1\Psi^{i}\cdot\Psi^{l-i-j}\bigg)
		\\
		=& E\Psi^l\cdot \partial_1F -\frac12 \sum_{i=1}^{l-1}\binom{l}{i}\partial_t^{i}E_{t}\bigg(\sum_{j=0}^{l-i-1}\binom{l-i}{j}\partial_1\Psi^{l-i-j}\cdot\Psi^j+\sum_{j=1}^{l-i}\binom{l-i}{j}\partial_1\Psi^{l-i-j}\cdot\Psi^j\bigg) \\
		=&E\Psi^l\cdot \partial_1F + \sum_{i=1}^{l-1}\binom{l}{i}\partial_t^{i}E_{t}\partial_1F\cdot\Psi^{l-i},
		\end{align*}
		which proves (\ref{bc2}). In the first equation, we  used the $(1,2)$ component of the differential equation for $\Psi^l$. In the second equation, we used the free boundary condition. The third equation follows from the differentiated boundary condition (\ref{diffbc}) for $\Psi^l$ and the induction hypothesis. The fourth equation, follows from using the $j=0$ terms to cancel  the second term. The fifth equation follows from changing indices and recomputing the binomial coefficients. The last equation follows from the differentiated boundary condition for $\Psi^{l-1}$.  \\ \indent We recall that $\zeta=rE_t$. Differentiating the identity
		$
		\partial_2\zeta=\zeta^2
		$
		$l$ times with respect to $t$,  we obtain
		$$
		\partial_2\partial_t^l\zeta\bigg|_{t=0}=\partial_t^l\zeta^2\bigg|_{t=0}=\sum_{i=0}^l\binom{l}{i}\partial_t^i\zeta\partial_t^{l-i}\zeta\bigg|_{t=0}.
		$$
Moreover,
		$$
		\partial_t^l\zeta \zeta\bigg|_{t=0}  =\frac12\partial_t^l \zeta^2\bigg|_{t=0}-\frac12\sum_{i=1}^{l-1}\binom{l}{i}\partial_t^i \zeta \partial_t^{l-i}\zeta\bigg|_{t=0}.
		$$
		Hence, using $\zeta=E$ on $\partial D$, the identity $\partial_2\zeta=\zeta^2$ for the lower order terms and the $(1,1)$ component of the differential equation for $\Psi^l$, we obtain, at $t=0$, that
		\begin{align*}
		&\partial_2\bigg(\partial_1F\cdot\partial_1\Psi^l+\frac12 \sum_{i=1}^{l-1}\binom{l}{i}\partial_1\Psi^{l-i}\cdot\partial_1\Psi^i\bigg)\\=&\zeta\sum_{i=0}^{l}\binom{l}{i}\partial_t^i\zeta\partial_t^{l-i}\zeta +\frac12\sum_{i=1}^{l-1}\binom{l}{i}\bigg(\partial_t^{l-i}\zeta\sum_{j=0}^{i}\binom{i}{j}\partial_t^j\zeta\partial_t^{i-j}\zeta+\partial_t^{i}\zeta\sum_{m=0}^{l-i}\binom{l-i}{m}\partial_t^m\zeta\partial_t^{l-i-m}\zeta\bigg).
		\end{align*}
		Consequently, at $t=0$, we have 
		\begin{align}
		&\partial_1\partial_2F\cdot\partial_1\Psi^l+\partial_1F\cdot\partial_1\partial_2\Psi^l+\frac12 \sum_{i=1}^{l-1}\binom{l}{i}\bigg(\partial_1\partial_2\Psi^{l-i}\cdot\partial_1\Psi^i+\partial_1\Psi^{l-i}\cdot\partial_1\partial_2\Psi^i\bigg)\notag\\=&\zeta\sum_{i=0}^{l}\binom{l}{i}\partial_t^i\zeta\partial_t^{l-i}\zeta +\frac12\sum_{i=1}^{l-1}\binom{l}{i}\bigg(\partial_t^{l-i}\zeta\sum_{j=0}^{i}\binom{i}{j}\partial_t^j\zeta\partial_t^{i-j}\zeta+\partial^i_t\zeta\sum_{m=0}^{l-i}\binom{l-i}{m}\partial_t^m\zeta\partial_t^{l-i-m}\zeta\bigg).
		\label{firsteqncomp}
		\end{align}
		We calculate 
		\begin{align}
		\partial_1\partial_2F\cdot \partial_1\Psi^l=\partial_1EF\cdot\partial_1\Psi^l+E\partial_1F\cdot\partial_1\Psi^l	\label{f12terms}
		\end{align}
		as well as
		\begin{align}
		&\partial_1F\cdot\partial_1\partial_2\Psi^l\notag\\=&\partial_1(\partial_1F\cdot\partial_2\Psi^l)-\partial_1\partial_1F\cdot\partial_2\Psi^l \notag
		\\=&-\partial_1(\partial_1\Psi^l\cdot \partial_2F) -\partial_1\bigg(\frac12 \sum_{i=1}^{l-1}\binom{l}{i}(\partial_1\Psi^{l-i}\cdot\partial_2\Psi^i+\partial_2\Psi^{l-i}\cdot\partial_1\Psi^i)\bigg) -\partial_1\partial_1F\cdot\partial_2\Psi^l \notag
		\\=& -\partial_1(E\partial_1\Psi^l\cdot F)-\partial_1\bigg(\frac12 \sum_{i=1}^{l-1}\binom{l}{i}(\partial_1\Psi^{l-i}\cdot\partial_2\Psi^i+\partial_2\Psi^{l-i}\cdot\partial_1\Psi^i)\bigg)-\partial_1\partial_1F\cdot\partial_2\Psi^l \notag
		\\=&\partial_1\bigg(E\partial_1F\cdot \Psi^l +\frac E2\sum_{i=1}^{l-1}\binom{l}{i}(\partial_1\Psi^{l-i}\cdot\Psi^i+\Psi^{l-i}\cdot\partial_1\Psi^i)\bigg)
		\\&-\partial_1\bigg(\frac12 \sum_{i=1}^{l-1}\binom{l}{i}(\partial_1\Psi^{l-i}\cdot\partial_2\Psi^i+\partial_2\Psi^{l-i}\cdot\partial_1\Psi^i)\bigg)-\partial_1\partial_1F\cdot\partial_2\Psi^l_2 \notag
		\\=&\partial_1\partial_1F\cdot \notag (E\Psi^l-\partial_2\Psi^l)+E\partial_1F\cdot\partial_1\Psi^l\\&+\partial_1E\bigg(\partial_1F\cdot \Psi^l+\frac12\sum_{i=1}^{l-1}\binom{l}{i}(\partial_1\Psi^{l-i}\cdot\Psi^i+\Psi^{l-i}\cdot\partial_1\Psi^i)\bigg) \notag 
		\\&+\frac12\sum_{i=1}^{l-1}\binom{l}{i}\bigg[E\bigg(\partial_1\partial_1\Psi^{l-i}\cdot\Psi^i+\Psi^{l-i}\cdot\partial_1\partial_1\Psi^i)+E\bigg(\partial_1\Psi^{l-i}\cdot\partial_1\Psi^i+\partial_1\Psi^{l-i}\cdot\partial_1\Psi^i\bigg) \notag
		\\&-\bigg(\partial_1\partial_1\Psi^{l-i}\cdot\partial_2\Psi^i+\partial_2\Psi^{l-i}\cdot\partial_1\partial_1\Psi^i\bigg)-\bigg(\partial_1\Psi^{l-i}\cdot\partial_1\partial_2\Psi^i+\partial_1\partial_2\Psi^{l-i}\cdot\partial_1\Psi^i\bigg)\bigg] \notag
		\\=&\partial_1\partial_1F\cdot (E\Psi^l-\partial_2\Psi^l)+E\partial_1F\cdot\partial_1\Psi^l+I+II+III+IV+V. \label{tau12terms}
		\end{align}
		In the second equation, we used the $(1,2)$ component of the differential equation for $\Psi^l$ and in the fourth equation, we used the differentiated boundary condition. Now $I$ and the first term on the left hand side of (\ref{f12terms}) cancel out because of the differentiated boundary condition, $V$ cancels out the third term on the left hand side of (\ref{firsteqncomp}). Moreover, at $t=0$, we have 
		$$
		2E\partial_1F\cdot\partial_1\Psi^l+III=	E\sum_{i=0}^{l}\binom{l}{i}\partial_t^{i}E_{t}\partial_t^{l-i}E_{t}.
		$$
		Combining all of this and noting that $E=\zeta$ on $\partial D$, we are left with, again at $t=0$,
		\begin{align}
		&\notag\partial_1\partial_1F\cdot (E\Psi^l-\partial_2\Psi^l) +II+IV\\=&\frac12\sum_{i=1}^{l-1}\binom{l}{i}\bigg(\partial_t^{l-i}E_{t}\sum_{j=0}^{i}\binom{i}{j}\partial_t^{j}E_{t}\partial_t^{i-j}E_{t}+\partial_t^{i}E_{t}\sum_{m=0}^{l-i}\binom{l-i}{m}\partial_t^{m}E_{t}\partial_t^{l-i-m}E_{t}\bigg).	\label{eqwrhs}
		\end{align}
		We now proceed to calculate the term $II+IV$. Using the induction hypothesis and changing the order of summation, we deduce at $t=0$
		\begin{align*}
		II+IV=&-\frac12 \sum_{i=1}^{l-1}\binom{l}{i}\bigg(\sum_{j=1}^{i}\binom{i}{j}\partial_t^{j}E_{t}\partial_1\partial_1\Psi^{l-i}\cdot\Psi^{i-j} 
		+\sum_{j=1}^{l-i}\binom{l-i}{j}
		\partial_t^{j}E_{t}\Psi^{l-i-j}\cdot\partial_1\partial_1\Psi^i\bigg)
		\\=&-\frac12\sum_{j=1}^{l-1}\binom{l}{j}\partial_t^{j}E_{t}\bigg(\sum_{i=0}^{l-j-1}\binom{l-j}{i}\partial_1\partial_1\Psi^{l-j-i}\cdot\Psi^{i}
		+\sum_{i=1}^{l-j}\binom{l-j}{i}\Psi^{l-j-i}\cdot\partial_1\partial_1\Psi^{i}\bigg)	
		\\=&-\frac12\sum_{j=1}^{l-1}\binom{l}{j}\partial_t^{j}E_{t}\bigg(\sum_{i=0}^{l-j}\binom{l-j}{i}\partial_1\partial_1\Psi^{l-j-i}\cdot\Psi^{i}
		+\sum_{i=0}^{l-j}\binom{l-j}{i}\Psi^{l-j-i}\cdot\partial_1\partial_1\Psi^{i}\bigg)	
		\\&+\sum_{j=1}^{l-1}\binom{l}{j}\partial_t^{j}E_{t}\partial_1\partial_1F\cdot \Psi^{l-j}.
		\end{align*}
		Differentiating the boundary condition (\ref{diffbc}) again and using the $(1,1)$ component of the differential equation for $\Psi^{l-j}$ we conclude
		\begin{align*}
		&-\frac12\sum_{j=1}^{l-1}\binom{l}{j}\partial_t^{j}E_{t}\bigg(\sum_{i=0}^{l-j}\binom{l-j}{i}\partial_1\partial_1\Psi^{l-j-i}\cdot\Psi^{i}
		+\sum_{i=0}^{l-j}\binom{l-j}{i}\Psi^{l-j-i}\cdot\partial_1\partial_1\Psi^{i}\bigg)
		\\=&\frac12\sum_{j=1}^{l-1}\binom{l}{j}\partial_t^{j}E_{t}\bigg(\sum_{i=0}^{l-j}\binom{l-j}{i}\partial_1\Psi^{l-j-i}\cdot\partial_1\Psi^{i}
		+\sum_{i=0}^{l-j}\binom{l-j}{i}\partial_1\Psi^{l-j-i}\cdot\partial_1\Psi^{i}\bigg)
		\\=&\frac12\sum_{j=1}^{l-1}\binom{l}{j}\partial_t^{j}E_{t}\bigg(\sum_{i=0}^{l-j}\binom{l-j}{i}\partial_t^{l-j-i}E_{t}\partial_t^{i}E_{t}
		+\sum_{i=0}^{l-j}\binom{l-j}{i}\partial_t^{l-j-i}E_{t}\partial_t^{i}E_{t}\bigg),
		\end{align*}
		which is exactly the right hand side of (\ref{eqwrhs}).  Hence, we conclude, at $t=0$, that
		$$
		0= \partial_1\partial_1F\cdot\bigg(E\Psi^l-\partial_2\Psi^l+\sum_{i=1}^{l-1}\binom{l}{i}\partial_t^{i}E_{t} \Psi^{l-i}\bigg)
		= \partial_1\partial_1F\cdot\bigg(-\partial_2\Psi^l+\sum_{i=0}^{l-1}\binom{l}{i}\partial_t^{i}E_{t} \Psi^{l-i}\bigg).
		$$
		Since we already know that the only potentially non-zero component of the second term in the product is the normal component, we conclude as before that
		$$
		0=A_{11}\nu\cdot\bigg(\partial_2\Psi^l-\sum_{i=0}^{l-1}\binom{l}{i}\partial_t^{i}E_{t}\cdot \Psi^{l-i}\bigg).
		$$
	\end{proof}
	\subsection{A-priori estimates and convergence of the power series}
	The final ingredient  to prove that the solution space is open is an a-priori estimate for the map $\Psi^l$. This will establish the convergence of the power series.
	\begin{lem}
		Let $l\in\mathbb{N}$ and $\Psi^l$ be a solution of (\ref{linearizedequations}). Then  $\Psi^l$ satisfies the estimate
		\begin{align*}
		|\Psi^l|_{C^{2,\alpha}(\bar{D})}\leq c\bigg(&
		\sum_{i=0}^l\binom{l}{i}|\partial_t^iE_{t}|_{C^{2,\alpha}(\bar{D})}|\partial_t^{l-i}E_{t}|_{C^{2,\alpha}(\bar{D})}+ \sum_{i=1}^{l-1}\binom{l}{i}|\Psi^{l-i}|_{C^{2,\alpha}(\bar{D})}\cdot |\Psi^{i}|_{C^{2,\alpha}(\bar{D})}
		\\&+\sum_{i=1}^{l}\binom{l}{i}|\partial_t^iE_{t}|_{C^{1,\alpha}(\bar{D})}|\Psi^{l-i}|_{C^{1,\alpha}(\bar{D})}
		\bigg)\bigg|_{t=0},
		\end{align*}
		where $c$  depends on $\alpha$, the $C^{3,\alpha}$-data of $F$ and the $C^{2,\alpha}$-data of $h$. As usual, all norms are taken with respect to Euclidean isothermal coordinates on $D$.
		\label{aprioriestimate}
	\end{lem}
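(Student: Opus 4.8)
The plan is to read off the linear boundary value problem satisfied by $\Psi^l$, bound the relevant Hölder norms of its interior and boundary data by the three sums on the right-hand side, and then combine the Schauder theory for the elliptic operator $\mathcal{L}$ with the standard first-order (rotation field) reformulation of the linearised isometric embedding equation in order to control the full map $\Psi^l$ and not merely its tangential part.

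First I would record, using \eqref{linearizedequations} together with Lemma \ref{improvedbc}, that $\Psi:=\Psi^l$ solves
\[
dF\cdot d\Psi=\tilde q^l \text{ in } D,\qquad \Psi\cdot F=\psi^l \text{ and } \partial_r\Psi-E\Psi=\Phi^l \text{ on } \partial D,
\]
where $\tilde q^l:=\tfrac12\operatorname{Id}\sum_{i=0}^l\binom li\partial_t^iE_t\partial_t^{l-i}E_t\big|_{t=0}-\tfrac12\sum_{i=1}^{l-1}\binom li d\Psi^{l-i}\cdot d\Psi^i$, $\psi^l:=-\tfrac12\sum_{i=1}^{l-1}\binom li\Psi^{l-i}\cdot\Psi^i$ and $\Phi^l:=\sum_{i=1}^{l}\binom li\partial_t^iE_t\Psi^{l-i}\big|_{t=0}$. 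Using that the Hölder norm satisfies $|fg|_{C^{m,\alpha}(\overline D)}\le c\,|f|_{C^{m,\alpha}(\overline D)}|g|_{C^{m,\alpha}(\overline D)}$ together with the trace estimate $|f|_{C^{m,\alpha}(\partial D)}\le c\,|f|_{C^{m,\alpha}(\overline D)}$, one bounds $|\tilde q^l|_{C^{1,\alpha}(\overline D)}$ and $|\psi^l|_{C^{2,\alpha}(\partial D)}$ by $c$ times the first two sums in the statement and $|\Phi^l|_{C^{1,\alpha}(\partial D)}$ by $c$ times the third; here one uses that, by hypothesis, the data $\partial_t^iE_t$ are controlled in $C^{2,\alpha}(\overline D)$, while $F$ is controlled in $C^{3,\alpha}(\overline D)$ and $h$ in $C^{2,\alpha}(\overline D)$, which in particular controls the second fundamental form $A$ and its Riemannian inverse in $C^{1,\alpha}(\overline D)$.

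Next I would upgrade the first-order estimate of Lemma \ref{existlem}. The one-form $w=\Psi\cdot dF$ solves $\mathcal L(w)=(q^l,\psi^l)$ with $q^l=\tilde q^l-\tfrac12\operatorname{tr}_A(\tilde q^l)A\in C^{1,\alpha}(\overline D)$, so the elliptic regularity and the Schauder a-priori estimate underlying Lemma \ref{Fredholm}, applied with $b=1$, give $w\in C^{2,\alpha}(\overline D)$ with
\[
|w|_{C^{2,\alpha}(\overline D)}\le c\bigl(|\tilde q^l|_{C^{1,\alpha}(\overline D)}+|\psi^l|_{C^{2,\alpha}(\partial D)}+|w|_{C^{1,\alpha}(\overline D)}\bigr).
\]
This controls the tangential part $g^{ij}w_i\,\partial_jF$ of $\Psi$ in $C^{2,\alpha}$, but the algebraic identity $v=\tfrac12\operatorname{tr}_A(\tilde q^l-\operatorname{Sym}(\nabla_h w))$ only places the normal part $v=\Psi\cdot\nu$ in $C^{1,\alpha}$. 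To recover the missing derivative I would use that $\Psi$ gains one full derivative over $\tilde q^l$, exactly as in the linearised theory of the Weyl problem: writing $\partial_jF\cdot\partial_i\Psi=\tilde q^l_{ij}+c\,\varepsilon_{ij}$ and $\nu\cdot\partial_i\Psi=p_i$, the first jet of $\Psi$ is determined by the scalars $c,p_1,p_2$ — equivalently by the infinitesimal rotation field $Y$ with $\partial_i\Psi=Y\times\partial_iF+(\text{a term linear in }\tilde q^l)$ — and the integrability condition $\partial_1\partial_2\Psi=\partial_2\partial_1\Psi$ turns this into a first-order elliptic system whose ellipticity is equivalent to strict convexity of $F(\overline D)$, i.e.\ to $K_h>0$. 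Its inhomogeneity involves only $\tilde q^l$, $d\tilde q^l$ and the $C^{3,\alpha}$-geometry of $F$, and its boundary conditions are obtained by differentiating $\Psi\cdot F=\psi^l$ and $\partial_r\Psi-E\Psi=\Phi^l$ tangentially along $\partial D$. Schauder estimates for this system give $c,p_1,p_2\in C^{1,\alpha}(\overline D)$, and since $\partial_jv=p_j-A_j{}^k w_k$ this yields $v\in C^{2,\alpha}(\overline D)$ and hence $\Psi=g^{ij}w_i\,\partial_jF+v\nu\in C^{2,\alpha}(\overline D)$, all with a bound by $|\tilde q^l|_{C^{1,\alpha}}$, $|\psi^l|_{C^{2,\alpha}}$, $|\Phi^l|_{C^{1,\alpha}}$ and the lower-order quantity $|\Psi|_{C^{1,\alpha}}$ (into which $|w|_{C^{1,\alpha}}$ is absorbed, since $w=\Psi\cdot dF$).

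Finally I would absorb the remaining lower-order term $|\Psi|_{C^{1,\alpha}}$. Because the Neumann-type condition $\partial_r\Psi-E\Psi=\Phi^l$ forbids adding an infinitesimal rotation $BF$ to a solution — on $\partial D$ one has $\partial_r(BF)=E\,BF$, so this would force $BF|_{\partial D}=0$, and since $F(\partial D)$ is not contained in a great circle by the Crofton-type argument in the proof of Lemma \ref{existlem}, $B=0$ — the homogeneous version of the problem has only the trivial solution; a standard compactness argument then removes $|\Psi|_{C^{1,\alpha}}$ from the right-hand side and leaves precisely the asserted estimate. I expect the main obstacle to be the step of bounding $\Psi$, and not merely its tangential part $w$, in $C^{2,\alpha}$: one must set up the rotation field system so that its ellipticity visibly comes from $K_h>0$, check that its right-hand side depends on at most one derivative of $\tilde q^l$ — so that only $C^{2,\alpha}$-norms of the lower-order maps $\Psi^i$, and not $C^{3,\alpha}$-norms, appear — and carefully propagate the two boundary conditions through tangential differentiation, which is exactly where the normalisation $k_h\equiv 1$ (via Lemma \ref{improvedbc}) and the positivity $K_h>0$ both enter.
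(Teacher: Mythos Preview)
Your setup and the identification of the data $\tilde q^l,\psi^l,\Phi^l$ are correct, and splitting $\Psi$ into its tangential part $w$ and normal part $v$ is exactly the right move. There are, however, two genuine problems.

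First, your compactness argument at the end rests on a false claim. For any skew matrix $B$, the field $BF$ satisfies $dF\cdot d(BF)=0$, $F\cdot BF=0$ and, since $\partial_rF=EF$, also $\partial_r(BF)=B(EF)=E(BF)$. Thus $BF$ \emph{does} lie in the kernel of the full homogeneous problem, and the sentence ``so this would force $BF|_{\partial D}=0$'' is simply incorrect: the identity $\partial_r(BF)=E\,BF$ holds automatically and imposes no constraint. The kernel is at least three-dimensional, so you cannot remove $|\Psi|_{C^{1,\alpha}}$ by uniqueness. The paper handles this at a different stage: it works with $w$ rather than $\Psi$, \emph{chooses} $w$ orthogonal to $\ker\mathcal L$ (this is the freedom in solving the linearised problem), and uses compactness to drop $|w|_{C^{0,\alpha}}$ from the first-order Schauder estimate before any second-order analysis begins.

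Second, your route to $|v|_{C^{2,\alpha}}$ via a first-order rotation-field system for $(c,p_1,p_2)$ is a legitimate alternative, but you leave precisely the crucial point --- that the inhomogeneity involves at most one derivative of $\tilde q^l$, so that no $C^{3,\alpha}$-norms of the lower $\Psi^i$ appear --- as something to ``check'', and you do not spell out which boundary conditions make that system Fredholm. The paper is much more explicit here: after obtaining the $C^{1,\alpha}$-bound for $w$, it derives scalar second-order elliptic equations for $u_1,u_2$ directly from the first-order system and applies Schauder theory with the Dirichlet condition $u_r=\psi^l$ and a Neumann-type condition for $u_\varphi$. For $v$ it then uses the \emph{Nirenberg trick}: differentiating the three first-order equations and combining them so that all third derivatives of $\Psi$ cancel yields a second-order elliptic equation for $v$ whose right-hand side contains the special combination $2\partial_1\partial_2 q_{12}-\partial_1\partial_1 q_{22}-\partial_2\partial_2 q_{11}$, which one verifies involves no third derivatives of the $\Psi^i$; the Neumann condition $\partial_r v=\Phi^l\cdot\nu+Ev+A_{rr}u_r$ comes straight from Lemma~\ref{improvedbc}. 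This makes the derivative count and the boundary data completely transparent and is what allows the estimate to close with only $C^{2,\alpha}$-norms of the $\Psi^i$ on the right-hand side.
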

	\begin{proof}
		We fix an integer $l\in\mathbb{N}$, abbreviate $\Psi=\Psi^l$ and define 
		$$
		\psi=-\frac12 \sum_{i=1}^{l-1}\binom{l}{i}\Psi^{l-i}\cdot \Psi^{i}, \qquad \Phi=\sum_{i=1}^{l}\binom{l}{i}\partial_t^iE_{t}\Psi^{l-i}\bigg|_{t=0}
		$$
		as well as
		$$
		q=\frac12\bigg(\operatorname{Id}\sum_{i=0}^l\binom{l}{i}\partial_t^iE_{t}\partial_t^{l-i}E_{t}- \sum_{i=1}^{l-1}\binom{l}{i}d\Psi^{l-i}\cdot d\Psi^{i}\bigg)\bigg|_{t=0}.
		$$
		Let $\partial_1,\partial_2$ be any coordinate system on $D$. We recall that $$w=u_id^i,\quad  u_i=\Psi\cdot \partial_i F, \quad v=\Psi\cdot \nu.$$ $\psi$ and $\Phi$ are of class $C^2$ while $q$ is of class $C^1$. Thus,  Lemma (\ref{existlem}) implies the a-priori estimate
		$$
		|w|_{C^{1,\alpha}(\bar{D})}\leq c( | q|_{C^{0,\alpha}(\bar{D})}+|\psi|_{C^{1,\alpha}(\partial D)}+|w|_{C^{0,\alpha}(\bar{D})}),
		$$
		where $c$ depends on $\alpha$, $|A|_{C^1(\bar{D})}$ and $|h|_{C^1(\bar{D})}$. After
		choosing $w$ to be orthogonal to the kernel of $\mathcal{L}$, a standard compactness argument implies the improved estimate
		\begin{align}
		|w|_{C^{1,\alpha}(\bar{D})}\leq c( | q|_{C^{0,\alpha}(\bar{D})}+|\psi|_{C^{1,\alpha}(\partial D)})
		\label{improvedest}.
		\end{align}
		We rewrite (\ref{linearized equations}) as
		\begin{align}
		\frac{1}{A_{11}}(\partial_1u_{1}-\Gamma^i_{1,1}u_i-q_{11})&=\frac{1}{A_{22}}(\partial_2u_{2}-\Gamma^i_{2,2}u_i-q_{22}), \notag	\\
		\partial_1u_{2}+\partial_2u_{1}-2\Gamma^i_{1,2}u_i-2q_{12}&=2\frac{A_{12}}{A_{11}}(\partial_1u_{1}-\Gamma^i_{1,1}u_i-q_{11}), \label{neumannbc}
		\\\partial_1u_{2}+\partial_2u_{1}-2\Gamma^i_{1,2}u_i-2q_{12}&=2\frac{A_{12}}{A_{22}}(\partial_2u_{2}-\Gamma^i_{2,2}u_i-q_{22}) \notag
		\end{align}
		and
		\begin{align}
		\begin{cases}
		& w(\partial_r)=E\psi \quad\hspace{0.1cm}\text{ on } \partial D \\
		& \partial_r \Psi =E\Psi+\Phi \text{ on } \partial D.\end{cases}	
		\label{pde} 
		\end{align}
		For the sake of readability, we define $Q$ to be a quantity which can be written in the form
		$$Q(\zeta)=\rho^{ijm}\zeta_{ijm},$$ where $\rho^{ijm}$ and $\zeta_{ijm}$ are functions defined on $D$. In practice, $\zeta$ will be one of $u_i$, $q_{ij},$ $\partial_i u_j$, $\partial_i q_{jm}$ and for every $\rho$ under consideration, we have the uniform estimate
		$$
		|Q(\zeta)|_{C^{0,\alpha}(\bar{D})}\leq c|\zeta|_{C^{0,\alpha}(\bar{D})},
		$$
		where $c$ only depends on $|h|_{C^{2,\alpha}(\bar{D})}$ and $|A|_{C^{1,\alpha}(\bar{D})}$. Differentiating the first equation of (\ref{neumannbc}) with respect to $\partial_1$ and the second equation with respect to $\partial_2 $, we obtain
		\begin{align*}
		\partial_1\partial_1u_1&=\frac{A_{11}}{A_{22}}\partial_1\partial_2u_2+Q(u_i)+Q(\partial_i u_j)+Q(q_{ij})+Q(\partial_iq_{jm}),\\
		\partial_2\partial_2u_1&=-\partial_2\partial_1u_2+2\frac{A_{12}}{A_{11}}\partial_2\partial_1u_1
		+Q(u_i)+Q(\partial_i u_j)+Q(q_{ij})+Q(\partial_iq_{jm}).
		\end{align*}
		Multiplying the second inequality by $A_{11}/A_{22}$ and adding the equalities we infer
		\begin{align}
		\partial_1\partial_1u_1+\frac{A_{11}}{A_{22}}\partial_2\partial_2u_1-2\frac{A_{12}}{A_{22}}\partial_1\partial_2u_1=Q(u_i)+Q(\partial_i u_j)+Q(q_{ij})+Q(\partial_iq_{jm}).
		\label{u1equation}
		\end{align}
		Similarly, one obtains 
		\begin{align}
		\partial_1\partial_1u_2+\frac{A_{11}}{A_{22}}\partial_2\partial_2u_2-2\frac{A_{12}}{A_{22}}\partial_1\partial_2u_2=Q(u_i)+Q(\partial_i u_j)+Q(q_{ij})+Q(\partial_iq_{jm}).\label{u2equation}
		\end{align}
		Since strict convexity implies that these equations are strongly elliptic, we may choose Euclidean coordinates and appeal to the Schauder theory for elliptic equations to obtain the interior estimate
		\begin{align}
		|w|_{C^{2,\alpha}(D_{1/2})}
		\leq c(|w|_{C^{1,\alpha}(\bar{D})}+|q|_{C^{1,\alpha}(\bar{D})}),
		\label{interiorest}
		\end{align}
		where $c$ depends on $\alpha$, $|h|_{C^{2,\alpha}(\bar{D})}$, $|A|_{C^{1,\alpha}(\bar{D})}$ and $|A^{-1}|_{C^{0}(\bar{D})}$.  Next, we choose polar coordinates $(\varphi,r)$ and note that $$u_\varphi=(u_{x_1},u_{x_2})\cdot(-x_2,x_1), \quad u_r=r^{-1}(u_{x_1},u_{x_2})\cdot(x_1,x_2)$$  are well-defined functions on the annulus $D\setminus D_{1/4}$. Here,  $(x_1,x_2)$ denote  Euclidean coordinates on $D$. The ellipticity of (\ref{u1equation}) and (\ref{u2equation}) remains unchanged if we write $\partial_\varphi,$ $\partial_r$ in terms of $\partial_{x_1},$ $\partial_{x_2}$. Consequently, $u_r$ and $u_\varphi$ satisfy a strongly elliptic equation with respect to Euclidean coordinates. Moreover, for every $b\in\mathbb{N}$, there is a constant $c_b>1$ which only depends on $b$ such that
		\begin{align}
		\label{equivalent norms}
		|w|_{c^{b,\alpha}(D\setminus D_{1/4})}\leq c_b(|u_r|_{C^{b,\alpha}(D\setminus D_{1/4})}+|u_\varphi|_{C^{b,\alpha}(D\setminus D_{1/4})})\leq c_b^2|w|_{C^{b,\alpha}(D\setminus D_{1/4})}.
		\end{align}
		We now apply the Schauder theory for elliptic equations to $u_r$ and $u_\varphi$. Since $u_r=\psi$ on $\partial D$, we find 
		\begin{align}
		|u_r|_{C^{2,\alpha}(D\setminus D_{1/4})}\leq c (|w|_{C^{1,\alpha}(D\setminus D_{1/4})}+|q|_{C^{1,\alpha}(D\setminus D_{1/4})}+|\psi|_{C^{2,\alpha}(\bar{D})}+|w|_{C^{2,\alpha}(\bar{D}_{1/4})}),
		\label{urest}
		\end{align} 
		where $c$ has the same dependencies as before. 
		Conversely, equation (\ref{neumannbc}) yields  that
		$$
		\partial_ru_\varphi=Q(u_i)+Q(\partial_i u_r)+Q(q_{ij}).\
		$$
		on $\partial D$.
		Consequently, the Schauder estimates for equations with Neumann boundary conditions give
		$$
		|u_\varphi|_{C^{2,\alpha}(D\setminus D_{1/4})}\leq c (|w|_{C^{1,\alpha}(D\setminus D_{1/4})}+|q|_{C^{1,\alpha}(D\setminus D_{1/4})}+|u_r|_{C^{2,\alpha}(D\setminus D_{1/4})}+|w|_{C^{2,\alpha}(D_{1/4})}).
		$$
		Combining this with (\ref{urest}), we may remove the $u_r$ term on the right hand side. Then, the interior estimate (\ref{interiorest}), the estimate (\ref{improvedest}) and (\ref{equivalent norms}) imply
		\begin{equation}
		\begin{aligned}
		|w|_{C^{2,\alpha}(\bar{D})}&\leq c (|w|_{C^{2,\alpha}({ D\setminus D_{1/2})}}+|u_\varphi|_{C^{2,\alpha}(D\setminus D_{1/4})}+|u_r|_{C^{2,\alpha}(D\setminus D_{1/4})})\\&\leq c(|q|_{C^{1,\alpha}(\bar{D})}+|\psi|_{C^{2,\alpha}(\bar{D})}).
		\end{aligned}
		\end{equation} Finally, we need an estimate for $v$. There holds
		$$
		v = \frac12\operatorname{tr}_A(\text{Sym}(\nabla_h w) -\tilde q)
		$$
		and consequently
		$$
		|v|_{C^{1,\alpha}(\bar{D})}\leq C (|q|_{C^{1,\alpha}(\bar{D})}+|u|_{C^{2,\alpha}(\bar{D})})
		\leq  C (|q|_{C^{1,\alpha}(\bar{D})}+|\psi|_{C^{2,\alpha}(\bar{D})}).
		$$
		In order to proceed, we use the so-called Nirenberg trick in the following way. We differentiate the first equation of (\ref{linearized equations}) twice in $\partial_1$ direction, the second equation in $\partial_1$ and $\partial_2$ direction and the third equation twice in $\partial_2$ direction. We then multiply the second equation by $(-1)$ and add all three equations. The terms involving third derivatives cancel out and we obtain
		\begin{align*}
		\partial_1\partial_1(v A_{22})&+\partial_2\partial_2(v A_{11})-2\partial_1\partial_2(v A_{12})
		\\=2\partial_1\partial_2q_{12}&-\partial_1\partial_1q_{22}-\partial_2\partial_2q_{11}+Q(\partial_i\partial_j u_m)+Q(\partial_i u_j)+Q(u_i)+Q(q_{ij})+Q(\partial_i q_{jm}).
		\end{align*}
		As before, strict convexity translates into strong ellipticity of this equation. Moreover, the Gauss-Codazzi equations imply that the second order derivatives of the second fundamental form on the left hand side cancel out. Similarly, one may check that no third order terms of the metric $h$ appear on the right hand side. On the other hand, in polar coordinates, we have 
		$$
		\partial _r v=\partial_r\Psi\cdot \nu+\Psi\cdot \partial_r\nu=\Phi\cdot \nu +E\Psi\cdot \nu +A_{rr}u_r=\Phi\cdot \nu+Ev+A_{rr}u_r,
		$$
		where we used Lemma \ref{improvedbc} and $A_{\varphi r}=0$ on $\partial D$.
		Hence the Schauder estimates with Neumann boundary conditions imply
		$$
		|v|_{C^{2,\alpha}(\bar{D})}\leq c(|w|_{C^{2,\alpha}(\bar{D})}+|q|_{C^{1,\alpha}(\bar{D})}+|2\partial_1\partial_2q_{12}-\partial_1\partial_1q_{22}-\partial_2\partial_2q_{11}|_{C^{0,\alpha}(\bar{D})}+|\Phi|_{C^{1,\alpha}(\bar{D})}+|v|_{C^{1,\alpha}(\bar{D})})
		$$
		where the derivatives are taken with respect to Euclidean coordinates. Combining this with the previous estimates, we obtain the final estimate
		$$
		|\Psi|_{C^{2,\alpha}(\bar{D})}\leq c(|q|_{C^{1,\alpha}(\bar D)}+|2\partial_1\partial_2q_{12}-\partial_1\partial_1q_{22}-\partial_2\partial_2q_{11}|_{C^{0,\alpha}(\bar{D})}+|\Phi|_{C^{1,\alpha}(\bar{D})}+|\psi|_{C^{2,\alpha}(\bar{D})}).
		$$
		The term $2\partial_1\partial_2q_{12}-\partial_1\partial_1q_{22}-\partial_2\partial_2q_{11}$ does not contain any third derivatives of $\Psi^i$ where $i< l$ and it follows that
		\begin{align*}
		|\Psi|_{C^{2,\alpha}(\bar{D})}\leq c\bigg(&
		\sum_{i=0}^l\binom{l}{i}|\partial_t^iE_{t}|_{C^{2,\alpha}(\bar{D})}|\partial_t^{l-i}E_{t}|_{C^{2,\alpha}(\bar{D})}+ \sum_{i=1}^{l-1}\binom{l}{i}|\Psi^{l-i}|_{C^{2,\alpha}(\bar{D})}\cdot |\Psi^{i}|_{C^{2,\alpha}(\bar{D})}
		\\&+\sum_{i=1}^{l}\binom{l}{i}|\partial_t^iE_{t}|_{C^{1,\alpha}(\bar{D})}|\Psi^{l-i}|_{C^{1,\alpha}(\bar{D})}
		\bigg)\bigg|_{t=0}.
		\end{align*}
	\end{proof}
	We now iteratively use this a-priori estimate to show that the power series (\ref{taylor}) converges in $C^{2,\alpha}(\bar{D})$. To this end, recall that the conformal factors of the metrics $h$ and $\tilde h$ are given by $E$ and $\tilde E$, respectively. 
	\begin{lem}
		Given $\epsilon>0$ small enough, there exists a constant $\Lambda>0$ depending only on $|F|_{C^{3,\alpha}(\bar{D})},$ $|E|_{C^{2,\alpha}(\bar{D})},$ $|\tilde E|_{C^{2,\alpha}(\bar{D})},$ $ \alpha$ and a number $\delta>0$ which additionally depends on $\epsilon$ such that the following holds. If $$|E-\tilde E|_{C^{2,\alpha}(\bar{D})}<\delta,$$ then
		$$
		|\Psi^l|_{C^{2,\alpha}(\bar{D})}\leq l! (\Lambda\epsilon)^l.
		$$ 
		
		\label{finalestimate}
	\end{lem}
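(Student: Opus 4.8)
The plan is to prove, by strong induction on $l \ge 1$, the estimate $|\Psi^l|_{C^{2,\alpha}(\overline D)} \le l!\,(\Lambda\hat\epsilon)^l$, the point being to run the induction against a majorant built from the Catalan numbers, so that the quadratic self-convolution produced by Lemma \ref{aprioriestimate} closes up. As a preliminary I would record bounds for the time-Taylor coefficients of the conformal factor. From $E_t = E\big(1 + t(E-\tilde E)\tilde E^{-1}\big)^{-1}$ one computes $\partial_t^i E_t|_{t=0} = (-1)^i\,i!\,E\,\big((E-\tilde E)\tilde E^{-1}\big)^i$, so, since $C^{2,\alpha}(\overline D)$ and $C^{1,\alpha}(\overline D)$ are Banach algebras up to a fixed constant, there exist $A_0, A_1 > 0$, depending only on $|E|_{C^{2,\alpha}(\overline D)}$, $|\tilde E|_{C^{2,\alpha}(\overline D)}$ and $\min_{\overline D}\tilde E$, such that, writing $b_i$ for the $C^{2,\alpha}(\overline D)$-norm of $\partial_t^i E_t$ evaluated at $t=0$,
$$b_i \le i!\,A_0\,(A_1\delta)^i, \qquad \delta := |E - \tilde E|_{C^{2,\alpha}(\overline D)},$$
the same bound holding for the $C^{1,\alpha}(\overline D)$-norm (with possibly enlarged $A_0,A_1$), while $b_0 = |E|_{C^{2,\alpha}(\overline D)} \le A_0$. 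Each coefficient of positive order thus carries a factor $\delta^i$ that will be made small.

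Next I normalize. Set $\alpha_l := |\Psi^l|_{C^{2,\alpha}(\overline D)}/l!$, $\beta_i := b_i/i!$ and $\alpha_0 := |F|_{C^{2,\alpha}(\overline D)} =: M_F$, so $\beta_0 \le A_0$ and $\beta_i \le A_0(A_1\delta)^i$ for $i \ge 1$. Dividing the estimate of Lemma \ref{aprioriestimate} by $l!$ and using $\binom li\,i!\,(l-i)! = l!$, the binomial coefficients cancel and one obtains the convolution inequality
$$\alpha_l \le c\Big(\sum_{i=0}^l \beta_i\,\beta_{l-i} + \sum_{i=1}^{l-1}\alpha_i\,\alpha_{l-i} + \sum_{i=1}^l \beta_i\,\alpha_{l-i}\Big),$$
where $c$ is the constant from Lemma \ref{aprioriestimate}; crucially it is independent of $l$, depending only on $\alpha$ and the fixed $C^{3,\alpha}$- and $C^{2,\alpha}$-data of $F$ and $h = E^2(dx^2+dy^2)$. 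Let $N_n$ denote the $n$-th Catalan number, so $N_0 = 1$ and $N_{n+1} = \sum_{j=0}^n N_j\,N_{n-j}$, and define the majorant $\mu_l := N_{l-1}\,(3c)^{l-1}\,\hat\epsilon^{\,l}$ for $l \ge 1$. I claim that $\alpha_l \le \mu_l$ for all $l \ge 1$ provided $\hat\epsilon$ is small and $\delta < \tilde\epsilon$ for a suitable $\tilde\epsilon$; since $N_{l-1} \le 4^{l-1}$ this yields $\alpha_l \le (12c\hat\epsilon)^l$, hence the lemma with $\Lambda := 12c$.

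The base case $l = 1$ is immediate from the normalized inequality: $\alpha_1 \le c(2\beta_0\beta_1 + \beta_1 M_F) \le c\,A_0\,A_1(2A_0 + M_F)\,\delta$, which is $\le \hat\epsilon = \mu_1$ once $\delta$ is small. For the step $l \ge 2$, assume $\alpha_i \le \mu_i$ for $1 \le i \le l-1$ and estimate the three sums separately. The decisive one is the quadratic term: since $\mu_i\,\mu_{l-i} = N_{i-1}\,N_{l-i-1}\,(3c)^{l-2}\,\hat\epsilon^l$, the Catalan recursion gives $\sum_{i=1}^{l-1}\mu_i\,\mu_{l-i} = (3c)^{l-2}\hat\epsilon^l\sum_{j=0}^{l-2}N_j\,N_{l-2-j} = (3c)^{l-2}\hat\epsilon^l\,N_{l-1}$, so that $c\sum_{i=1}^{l-1}\alpha_i\,\alpha_{l-i} \le c\,(3c)^{l-2}\hat\epsilon^l\,N_{l-1} = \tfrac13\mu_l$; the prefactor $(3c)^{l-1}$ in $\mu_l$ is tuned precisely to absorb the constant $c$, while the Catalan numbers absorb the convolution. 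For the other two sums, using $\beta_i \le A_0(A_1\delta)^i$ and $\alpha_i \le \mu_i \le (12c\hat\epsilon)^i$, and isolating the term $i=l$ in the last sum (which contributes $\beta_l\,\alpha_0 = \beta_l\,M_F \le A_0\,M_F\,(A_1\delta)^l$), elementary geometric-series bounds show each sum is $\le \tfrac13\mu_l$ once $\delta < \tilde\epsilon$, where $\tilde\epsilon$ may be taken as the minimum of finitely many thresholds, each a constant depending only on $F, E, \tilde E, \alpha$ times $\hat\epsilon$. Adding the three contributions gives $\alpha_l \le \mu_l$, closing the induction.

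The main obstacle is precisely this quadratic self-convolution $\sum_{i=1}^{l-1}\alpha_i\,\alpha_{l-i}$: a naive geometric ansatz $\alpha_l \le P^l$ does not close, since the convolution would then contribute $(l-1)P^l$, which cannot be reabsorbed. The Catalan majorant is the remedy, collapsing the convolution into a single term via $\sum_j N_j\,N_{l-2-j} = N_{l-1}$, at the mild price of replacing $P$ by $12c\hat\epsilon$. The only other delicate points are the Banach-algebra bookkeeping for the coefficients $\partial_t^i E_t|_{t=0}$ and checking that the constant in Lemma \ref{aprioriestimate} is genuinely independent of $l$, which it is, depending only on the fixed data of $F$ and $h$.
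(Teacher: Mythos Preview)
Your proof is correct and follows essentially the same approach as the paper's: both normalize by dividing out the factorials, compute the explicit Taylor coefficients $\partial_t^i E_t|_{t=0}$ of the conformal factor, and close the induction by using the Catalan recursion $N_{l-1}=\sum_{j=0}^{l-2}N_jN_{l-2-j}$ to absorb the quadratic self-convolution $\sum_{i=1}^{l-1}\alpha_i\alpha_{l-i}$. The paper packages the Catalan numbers as the product $\prod_{j=2}^{l}(4-6/j)$ and isolates the recursion as a separate lemma, whereas you invoke $N_{l-1}$ directly; your choice of prefactor $(3c)^{l-1}$ corresponds to the paper's $\gamma^{l-1}c^{l-1}$ with $\gamma=3$, and your final $\Lambda=12c$ matches the paper's $\Lambda=4\gamma c$.
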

	\begin{proof}
		Let us define 
		$$
		\hat{\Psi}^i=\frac{1}{i!}\Psi^i, \qquad \hat E_{i}=\frac{1}{i!} \partial_t^iE_{t}\big|_{t=0}.
		$$	
		Now, Lemma \ref{aprioriestimate} becomes
		\begin{align}
		\notag |\hat\Psi^l|_{C^{2,\alpha}(\bar{D})}\leq c\bigg(&
		\sum_{i=0}^l|\hat E_{i}|_{C^{2,\alpha}(\bar{D})}|\hat E_{l-i}|_{C^{2,\alpha}(\bar{D})}+ \sum_{i=1}^{l-1}|\hat \Psi^{l-i}|_{C^{2,\alpha}(\bar{D})} |\hat\Psi^{i}|_{C^{2,\alpha}(\bar{D})}
		\\&+\sum_{i=1}^{l}|\hat E_{i}|_{C^{1,\alpha}(\bar{D})}|\hat \Psi^{l-i}|_{C^{1,\alpha}(\bar{D})} \label{condensed apriori}
		\bigg).
		\end{align}
		We may assume that $c\geq 1$. In order to proceed, we use the following recursive estimate.
		\begin{lem}
			Let $\{y_i\}_{i=1}^\infty$ be a sequence of positive numbers, $\epsilon$, $\gamma$, $c>0$ and assume that  $$y_i\leq \epsilon^i\gamma^{i-1}c^{i-1}\prod_{j=2}^{i}\bigg(4-\frac6j\bigg)$$ for every $i<l$, where $l\in\mathbb{N}$. Then there holds
			$$
			\sum_{i=1}^{l-1} y_iy_{l-i}
			\leq \epsilon^l\gamma^{l-2}c^{l-2}\prod_{j=2}^{l}\bigg(4-\frac6j\bigg).
			$$
			\label{recursiveestimate}
		\end{lem}
		We will prove the lemma later on. We now show that for every number $l\in\mathbb{N}$, the following two estimates hold
		\begin{equation}
		|\hat \Psi^l|_{C^{2,\alpha}(\bar D)}\leq  (\Lambda\epsilon)^l, \qquad |\hat\Psi^l|_{C^{2,\alpha}(\bar{D})}\leq
		\epsilon^{l}\gamma^{l-1}c^{l-1}\prod_{j=2}^{l}\bigg(4-\frac6j\bigg).
		\label{induction argument}
		\end{equation}
		Using the explicit definition of $E_t$ from Lemma \ref{pathconnect}, we compute
		$$
		\partial_t^iE_{t}\bigg|_{t=0}=(-1)^i\frac{E}{\tilde E^{i}}(E-\tilde E)^ii!.
		$$
		Hence, given $\tilde \epsilon>0$, we can chose $\delta$ small enough such that 
		$$
		|\partial_t^i E|_{C^{2,\alpha}(\bar{D})}\big|_{t=0}\leq i!\tilde{\epsilon}^i
		$$
		and consequently
		\begin{align}
		|\hat E_{i}|_{C^{2,\alpha}(\bar{D})}\leq \tilde {\epsilon}^i, \label{1Eest}
		\end{align}
		provided
		$$|E-\tilde E|_{C^{2,\alpha}(\bar{D})}<\delta.$$
		Moreover,
		\begin{align}
		\sum_{i=0}^l|\hat E_{i}|_{C^{2,\alpha}(\bar{D})}|\hat E_{l-i}|_{C^{2,\alpha}(\bar{D})}\leq (l+1)\tilde{\epsilon}^l. \label{sumEest}
		\end{align}
		Increasing $c$ if necessary, we may arrange that $|F|_{C^{2,\alpha}(\bar{D})}\leq c$. Together with (\ref{condensed apriori}) this implies, decreasing $\tilde \epsilon$ appropriately, that
		\begin{align*}
		|\hat \Psi^1|_{C^{2,\alpha}(\bar{D})}\leq 2c\tilde \epsilon+c^2\tilde{\epsilon}<\epsilon.
		\end{align*}
		In particular, for every $\gamma\geq 1$ we have
		$$
		|\hat \Psi^1|_{C^{2,\alpha}(\bar{D})}<\gamma^{0}c^{0}\epsilon^1\prod_{j=2}^{1}\bigg(4-\frac6j\bigg).
		$$
		This proves (\ref{induction argument}) for $l=1$ and every $\Lambda,$ $\gamma>1$. \\ \indent 
		Now given $l\geq 2$, let us assume that we have already shown that
		$$
		|\hat\Psi^i|_{C^{2,\alpha}(\bar{D})}\leq \gamma^{i-1}c^{i-1}\epsilon^i\prod_{j=2}^{i}\bigg(4-\frac6j\bigg)
		$$
		for every $i<l$ and some suitable choice of $\gamma>1$.
		Then the a-priori estimate (\ref{condensed apriori}), Lemma \ref{recursiveestimate} as well as (\ref{1Eest}) and (\ref{sumEest}) imply
		\begin{align}
		|\hat\Psi^l|_{C^{2,\alpha}(\bar{D})}\leq c\bigg((l+1)\tilde\epsilon^l+  \epsilon^l\gamma^{l-2}c^{l-2}\prod_{j=2}^{l}\bigg(4-\frac6j\bigg)
		+\sum_{i=1}^{l}\tilde{\epsilon}^i
		\gamma^{l-i-1}c^{l-i-1}\epsilon^{l-i-1}\prod_{j=2}^{l-i}\bigg(4-\frac6j\bigg)
		\bigg). \label{firstineq}
		\end{align}
		If we also ensure that $\tilde{\epsilon}<\epsilon/2,$ then  
		$$
		(l+1)\tilde\epsilon^l\leq\epsilon{^l} \gamma^{l-2}c^{l-2}\prod_{j=2}^{l}\bigg(4-\frac6j\bigg).
		$$
		Furthermore, we have the trivial estimate
		\begin{align*}
		\sum_{i=1}^{l}\tilde{\epsilon}^i
		\gamma^{l-i-1}c^{l-i-1}\epsilon^{l-i-1}\prod_{j=2}^{l-i}\bigg(4-\frac6j\bigg)
		\leq \gamma^{l-2}c^{l-2}\epsilon^{l}\prod_{j=2}^{l}\bigg(4-\frac6j\bigg)\sum_{i=1}^l2^{-i}
		\leq \gamma^{l-2}c^{l-1}\epsilon^{l}\prod_{j=2}^{l}\bigg(4-\frac6j\bigg),
		\end{align*}
		provided $c\geq 2$.
		Combining this with (\ref{firstineq}), we obtain
		$$
		|\hat\Psi^l|_{C^{2,\alpha}(\bar{D})}\leq 3\gamma^{l-2}c^{l-1}\epsilon^{l}\prod_{j=2}^{l}(4-6/j)\leq
		\gamma^{l-1}c^{l-1}\epsilon^{l}\prod_{j=2}^{l}\bigg(4-\frac6j\bigg),
		$$
		provided $\gamma\geq 3$. Thus, we can choose $\Lambda=4\gamma c$ to obtain
		$$
		|\hat\Psi^l|_{C^{2,\alpha}(\bar{D})}\leq \Lambda^l\epsilon^l.
		$$
	\end{proof}
	\begin{proof}[Proof of Lemma \eqref{recursiveestimate}]
		We may assume that $\epsilon=c=\gamma=1$. We are then left to show the following identity
		$$
		\sum_{i=1}^{l-1}\prod_{j=2}^{i}\bigg(4-\frac6j\bigg)\prod_{m=2}^{l-i}\bigg(4-\frac6m\bigg)=\prod_{j=2}^{l}\bigg(4-\frac6j\bigg).
		$$ 
		There holds
		$$
		\prod_{j=2}^{l}\bigg(4-\frac6j\bigg)=\frac{2^{l-1}}{l!}\prod_{j=2}^{l}(2j-3)
		=\frac{2^{l-1}}{l!}\frac{(2(l-1))!}{2^{l-1}(l-1)!}
		=\tilde y_{l-1}
		$$	
		where $\tilde y_{j}$ is the $j$-th Catalan number. For the Catalan numbers, the well-known recurrence relation
		$$
		\tilde y_{l-1}=\sum_{i=1}^{l-1}\tilde y_{i-1}\tilde y_{l-i-1}
		$$
		holds, see for instance \cite{stanley2015catalan}. This  implies the above identity.
	\end{proof}
	We are now in the position to prove that the solution space is open.
	\begin{prop}
		Let $h,$ $\tilde h\in\mathcal \mathcal{G}^{k,\alpha}$ and $F\in C^{k+1,\alpha}(\bar{D})$ be a solution of the free boundary problem (\ref{fbp}) for $h$. Then there exists a constant ${\delta}>0$ depending only on $\alpha$, the $C^{2,\alpha}$-data of $E$ and the $C^{3,\alpha}$-data of $F$ such that the following holds. There exists a solution $\tilde F\in C^{k+1,\alpha}(\bar{D})$ of (\ref{fbp}) for $\tilde h$ provided $$|h-\tilde h|_{C^{2,\alpha}(\bar{D})}<  \delta.$$ In particular, the space $\mathcal{G}_*^{k,\alpha}$ is open with respect to the $C^{2,\alpha}$-topology.	
		\label{openness}
	\end{prop}
	\begin{proof}
		Let $E$ and $\tilde E$ be the conformal factors of $h$ and $\tilde h$, respectively, and $h_t$ the connecting analytic path from see Lemma \ref{pathconnect}. 	We solve (\ref{linearizedequations}) to obtain the maps $\Psi^l$ and define $F_t:\bar{D}\times[0,1]\to\mathbb{R}^3$ by
		$$
		F_t=F+\sum_{l=1}^\infty \frac{\Psi^l}{l!}t^l.
		$$
		First, we choose $\delta>0$ such that $|\tilde E|_{C^{2,\alpha}(\bar{D})}<2|E|_{C^{2,\alpha}(\bar{D})},$
		provided
		$|E-\tilde E|_{C^{2,\alpha}(\bar{D})}<\delta.$ Let $\Lambda$ be the constant from Lemma \ref{finalestimate} and $\epsilon =(2\Lambda)^{-1}$. Lemma \ref{finalestimate} then implies that we may decrease $\delta$ appropriately such that
		$$
		|\Psi^l|_{C^{2,\alpha}(\bar{D})}\leq l! 2^{-l}
		$$ 
		for every $l\in\mathbb{N}$
		provided
		$|E-\tilde E|_{C^{2,\alpha}(\bar{D})}< \delta$. It follows that $F_t$ converges uniformly in $C^{2,\alpha}(\bar{D})$. Now, Lemma \ref{existence criterion}  implies that $\tilde F=F_1$ is a solution of (\ref{fbp}) with respect to $\tilde h$. Standard elliptic  theory yields the claimed regularity for $\tilde F$.	
	\end{proof}

\section{Closedness of the solution space}
In this section, we will prove the closedness of the solution space $\mathcal {G}_*^{k,\alpha}$  with respect to the $C^{4}$-topology, provided $k\geq 4$. We suspect that this requirement can be weakened to $k\geq 3$ using a more refined $C^2$-estimate; cf. \cite{heinz1962weyl,schulz2006regularity}.\\ \indent  
Let $\{h_l\}_{l=1}^\infty$ be a sequence of metrics $h_l\in \mathcal {G}_*^{k,\alpha}$ converging to a metric $\tilde h\in \mathcal{G}^{k,\alpha}$ in $C^{4}$. We use a compactness argument to show that $\tilde h\in\mathcal {G}_*^{k,\alpha}$. To this end, we prove an a-priori estimate for $|F_l|_{C^{2,\alpha}(\bar{D})}$, where $F_l$ is the solution (\ref{fbp}) with respect to $h_l$. 

\subsection{A global curvature estimate}
We fix $l\in\mathbb{N}$ and abbreviate $F=F_l$ as well as $h=h_l$. Since $F$ is an isometric embedding of a compact disc, it is bounded in $C^1$ in terms of the $C^0$-norm of the metric $h$, that is,
$$
|F|_{C^{1}(\bar{D})}\leq c|h|_{C^0(\bar{ D})}. 
$$
Furthermore, in every coordinate chart, there holds
$$
\partial_i\partial_j F =\Gamma^m_{ij}\partial_m F -A_{ij}\nu.
$$   
The second fundamental form can be expressed in terms of the Gauss curvature and the mean curvature. In fact, if $\kappa_1,\kappa_2$ are principal directions, then
$$
\kappa_1=\frac{H}{2}+\sqrt{\frac{H^2}{4}-K},\qquad \kappa_2=\frac{H}{2}-\sqrt{\frac{H^2}{4}-K}.
$$ 
This implies the following estimate.
\begin{lem}
	Let $h\in\mathcal{G}_*^{k,\alpha}$ and $F$ be a solution of (\ref{fbp}) with respect to $h$. Then there is a constant $c$ which only depends on $|h|_{C^1(\bar{ D})}$ and $|H|_{C^0(\bar{ D})}$ such that
	\label{closelem1}
	$$
	|F|_{C^2(\bar{ D})}\leq c.
	$$	

\end{lem}
In order to bound $H$ in terms of the intrinsic geometry, we  use the maximum principle. To this end, we observe that the free boundary condition implies a useful formula for the normal derivative of the mean curvature at the boundary $\partial D$.
\begin{lem}
	Let $h\in\mathcal{G}_*^{k,\alpha}$ and let $F$ be a solution of (\ref{fbp}) with respect to $h$. Then there exists a constant $c$ which only depends on $|h|_{C^{4}(\bar{ D})}$ and $|1/K|_{C^0(\bar{ D})}$  such that 
	\label{clselem2}
	\begin{align*}
	|H|_{C^0(\bar{D})}\leq c.
	\end{align*}
\end{lem}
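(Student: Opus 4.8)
The plan is to bound $H$ by the maximum principle, with Lemma~\ref{interiorestimateH} supplying interior control and a boundary estimate coming from the Codazzi equations closing the argument near $\partial D$. Since $F(\overline D)$ is strictly convex we have $H>0$ and the second fundamental form $A$ is a metric on $\overline D$ satisfying the Gauss equation $\det A=K\det h$; in particular the principal curvatures satisfy $\kappa_{\min}\kappa_{\max}=K\geq|1/K|_{C^0(\overline D)}^{-1}>0$. By Lemma~\ref{interiorestimateH}, for any fixed $\delta>0$ one has $|H|_{C^0(D_\delta)}\leq c(\delta,h)$; choosing $\delta$ in terms of $|h|_{C^0(\overline D)}$ so that $D_{1/2}\subset D_\delta$, it remains to bound $H$ on the collar $\overline D\setminus D_\delta$, i.e.\ to show that a near-boundary maximum of $H$ is controlled.

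For the interior, away from umbilic points (where $H^2=4K$ is already controlled by $|h|_{C^4(\overline D)}$) the largest principal curvature $\kappa_{\max}$ is a smooth positive function, and combining the Gauss equation $\det A=K\det h$ with the Codazzi identity $\operatorname{div}_h A=dH$ one obtains, as in the classical treatment of the Weyl problem (cf.\ \cite{nirenberg1953weyl,pogorelov1973extrinsic,heinz1962weyl}), a second order elliptic equation for $\log\kappa_{\max}$ whose principal part is comparable to the cofactor tensor of $A$ and whose lower order terms are controlled by $|h|_{C^4(\overline D)}$, $|1/K|_{C^0(\overline D)}$ and $H$ itself. Since the interior bound is already furnished by Lemma~\ref{interiorestimateH}, for the present estimate only the behaviour of this equation up to and on $\partial D$ is actually needed.

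The delicate point is the boundary relation for the normal derivative of $H$. Working in isothermal polar coordinates $(\varphi,r)$ centred at the origin, recall from \eqref{A cond boundary} that $A_{r\varphi}\equiv 0$ on $\partial D$, hence also $\partial_\varphi A_{r\varphi}=0$ there, and that $\partial_r F=EF$, $|F|^2=1$ and $k_h=1$ on $\partial D$; in particular the Gauss equation reads $A_{rr}A_{\varphi\varphi}=KE^4$ on $\partial D$. Evaluating the Codazzi identity $\operatorname{div}_h A=dH$ on $\partial_\varphi$ and on $\partial_r$ and feeding in these facts, by a computation analogous to (but simpler than) the one in Lemma~\ref{improvedbc} one should arrive at a relation of the form
\[
\partial_r H=\alpha\,H+\beta\qquad\text{on }\partial D,
\]
with $\alpha,\beta$ bounded in terms of $|h|_{C^4(\overline D)}$ and $|1/K|_{C^0(\overline D)}$. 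One then runs the maximum principle for $H$ (resp.\ $\log\kappa_{\max}$) on $\overline D\setminus D_\delta$: on the inner boundary $\partial D_\delta$ the value of $H$ is controlled by Lemma~\ref{interiorestimateH}, in the interior $H$ satisfies the elliptic equation above, and on $\partial D$ the oblique condition $\partial_r H=\alpha H+\beta$ together with the Hopf lemma rules out an uncontrolled boundary maximum. This yields $\max_{\overline D}H\leq c$ with $c$ depending non-decreasingly on $|h|_{C^4(\overline D)}$ and $|1/K|_{C^0(\overline D)}$, and since $H>0$ the lemma follows. The main obstacle is establishing the relation $\partial_r H=\alpha H+\beta$, which requires carefully tracking how the constancy $k_h=1$ and the orthogonality condition enter the Codazzi equations along $\partial D$; a secondary difficulty is making the (possibly degenerate) elliptic maximum principle operate uniformly up to $\partial D$, for which Nirenberg's auxiliary-function device may have to be carried out in the presence of the boundary term.
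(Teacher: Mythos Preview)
Your overall plan --- interior bound from Lemma~\ref{interiorestimateH}, then a boundary argument via Codazzi and the maximum principle --- is correct in spirit, but two of its steps break down.

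First, the claimed relation $\partial_r H=\alpha H+\beta$ with bounded $\alpha,\beta$ is not true. In Fermi coordinates $h=\zeta^2\,ds^2+dt^2$ with $\zeta=1$, $\partial_t\zeta=-1$ on $\partial D$, one has $A_{st}=0$, $A_{ss}A_{tt}=K$, and Codazzi gives $\partial_t A_{ss}=-H$ there. Differentiating the Gauss equation then yields
\[
\partial_t A_{tt}=\frac{HA_{tt}+\partial_t K-2K}{A_{ss}},
\]
so $\partial_t H$ contains the factor $1/A_{ss}$, which is not a priori bounded. What \emph{does} follow, after using $\partial_t A_{ss}\le 0$ and $A_{tt}\le H$, is only the one-sided inequality $\partial_t H\ge -\Lambda^2 H$ on $\partial D$, with $\Lambda$ controlled by $|K|_{C^1}$ and $|1/K|_{C^0}$. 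This is enough to show that the auxiliary function $\psi=e^{c_0 t+\frac12 c_1 t^2}H$ (with $c_0>\Lambda^2$) cannot attain its maximum on $\partial D$, but it is not an oblique condition one can feed into the Hopf lemma.

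Second, even with the correct boundary inequality, the elliptic equation $A^{ij}\partial_{ij}H=\cdots$ degenerates: the smallest eigenvalue of $(A^{ij})$ is comparable to $K/H$, so precisely where $H$ is large the maximum-principle bound you need becomes vacuous. The paper does not bypass this with a Hopf argument. Instead, having pushed the maximum of $\psi$ into the collar $\{0<t<\delta\}$, it analyses an interior maximum of $\psi$ directly: combining $\nabla\psi=0$, the second-order equation for $H$, and the differentiated Gauss--Codazzi relations, it splits into the cases $A_{ss}\ge\epsilon A_{tt}$ (uniform ellipticity) and $A_{ss}<\epsilon A_{tt}$ (degenerate), and in each case extracts $H^2\le c\Lambda^2$ by an explicit algebraic argument balancing the quadratic terms in $\partial A$ against $(\partial_t\rho)^2 H^2$. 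That case analysis is the heart of the proof; your final sentence correctly anticipates that something like this is needed, but it is the main content of the argument rather than a technicality.
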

\begin{proof}
	We choose Fermi-coordinates $h=\zeta^2ds^2+dt^2$ adapted to the boundary $\partial D$ with $\zeta(s,0)=1$. Since the geodesic curvature is equal to $1$, it follows that $\partial_t \zeta(\cdot,0)=-1$. One may compute that the only non-zero Christoffel symbols at the boundary are
	$$
	\Gamma^{s}_{st}=-1, \qquad \Gamma^t_{ss}=1.
	$$
	In this coordinate chart, the mean curvature is given by
	$$
	H=\zeta^{-2}A_{ss}+A_{tt}.
	$$
	Since $A_{st}$ vanishes on $\partial D$, see (\ref{A cond boundary}), the Gauss-Codazzi equations imply
	$$
	\partial_tA_{ss}=-H.
	$$
	Consequently, differentiating the Gauss equation and using $\partial_t \zeta=-1$, we find
	\begin{align*}
	\partial_t A_{tt}A_{ss}= \partial_t A_{ss}A_{tt}+\partial_t A_{tt}A_{ss}-2\partial_t A_{st}A_{st}+HA_{tt}=\partial_t (K\zeta^2)+HA_{tt}=\partial_t K-2K+HA_{tt}.
	\end{align*}
	on $\partial D$, where we used that  $A_{st}=0 $. In particular,
	$$
	\partial_tA_{tt}=\frac{\partial_t K}{K} A_{tt}- A_{tt}+\frac{A^2_{tt}}{A_{ss}}
	$$
	on $\partial D$.	 It follows that
	\begin{align}
	\partial_t H&\geq A_{ss}-\Lambda A_{tt}+\frac{A^2_{tt}}{A_{ss}}\label{boundaryestee},
	\end{align}
	where
	$$
	\Lambda=\operatorname{max}_{p\in D}|K^{-1}\nabla K|+2.
	$$
	Now,  let $\gamma>0$ be such that $A_{11}=\gamma A_{22}$. It follows that 
	$$
	\partial_t H>0
	$$
	unless 
	\begin{align}
	\Lambda^{-1}\leq\gamma\leq\Lambda. \label{principal comparison}
	\end{align}
	Now, suppose that $H$ attains its global maximum at $p\in\bar{D}$. If $p\in\partial D$, then
	$$
	\partial_t H\leq 0
	$$  
	and it follows that (\ref{principal comparison}) holds. We may assume that $A_{ss}\leq A_{tt}$ and estimate
	$$
	H\leq 2A_{tt}\leq 2\sqrt{\Lambda}\sqrt{A_{ss}A_{tt}}\leq 2\sqrt{\Lambda}\sqrt{K},
	$$
	as claimed. So let us assume that $p\in D$. We choose normal coordinates $\partial_1$, $\partial_2$ centred at $p$. Clearly, we have
	\begin{align}
	0&=\partial_1 A_{11}+\partial_1 A_{22},
	\label{firstmax} \\
	0&=\partial_2 A_{11}+\partial_2 A_{22}.
	\label{secondmax}
	\end{align}
	As has been shown in Lemma 9.3.3. in \cite{han2006isometric}, there also holds 
	\begin{align*}
	A^{ij}\partial_{ij}H=-\frac{2}{K}(\partial_lA_{11}\partial_lA_{22}-\partial_l A_{12}\partial_l A_{12}) +H^2-4K+\frac{\Delta_hK}{K}.
	\end{align*}
	Since the Hessian of $H$ is non-positive at $p$, strict convexity, (\ref{firstmax}) and (\ref{secondmax}) imply that
	\begin{align}
	0\geq&-\frac{2}{K}(\partial_lA_{11}\partial_lA_{22}-\partial_l A_{12}\partial_l A_{12}) +H^2-4K+\frac{\Delta_hK}{K}  \notag\\
	&=\frac{2}{K}(\partial_lA_{11}\partial_lA_{11}+\partial_l A_{12}\partial_l A_{12}) +H^2-4K+\frac{\Delta_hK}{K}  \notag\\
	&\geq H^2-4K+\frac{\Delta_h K}{K}.
	\end{align}
	The claim follows.
\end{proof}
\subsection{A Krylov-Evans type estimate}
Now, we improve the $C^2$-estimate to a $C^{2,\alpha}$-estimate.  The potential function
$$
f=\frac12 |F|^2
$$
can be used to estimate the second fundamental form $A$; cf. \cite{han2006isometric}. Namely, there holds
$$
\partial_i\partial_j f=\Gamma^m_{i,j}\partial_m f -A_{ij}F\cdot \nu +h_{ij}.
$$
It follows  that $|A|_{C^{0,\alpha}(\bar{D})}$ can be estimated in terms of $|h|_{C^{1,\alpha}(\bar{ D})}$ and $|f|_{C^{2,\alpha}(\bar{ D})}$ provided that $|F\cdot \nu|$ is uniformly bounded from below. Taking the determinant of both sides of the equation, we find that   $f$ satisfies the following Monge-Ampere type equation
\begin{align}
\begin{dcases}
&\det(\partial_{ij}f-\Gamma^m_{ij}\partial_mf-h_{ij})=\det(h)K(F\cdot \nu)^2
\text{ in }D, \\
&\partial_\mu f = 1 \text{ on }\partial D.
\end{dcases}
\label{MAP}
\end{align}
This suggests that a Krylov-Evans type estimate might be applicable. The major obstacle in this regard is that the free boundary condition implies that $F\cdot \nu=0 $ on $\partial D$. 
\begin{lem}
	Let $h\in{\mathcal G}_*^{k,\alpha}$ and  $F$ be a solution of the free boundary problem (\ref{fbp}) with respect to $h$. Then there exists a constant $c$ which only depends on $ |h|_{C^{4}(\bar{ D})},|1/K|_{C^0(\bar{ D})}$ and $\alpha\in(0,1)$ such that \label{close3est}
	\begin{align*}
	|F|_{C^{2,\alpha}(\bar{D})}\leq c.
	\end{align*}

\end{lem}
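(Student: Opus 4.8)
The plan is to run an Evans--Krylov argument on the Monge--Amp\`ere equation \eqref{MAP}, but first translating the embedding so that the right-hand side no longer degenerates at the boundary. Concretely, I would replace the potential $f=\tfrac12 F\cdot F$ by $f_a:=\tfrac12|F-a|^2$ for a well-chosen constant vector $a\in\mathbb R^3$. Exactly as above one has $\partial_i\partial_j f_a-\Gamma^l_{ij}\partial_l f_a-h_{ij}=-A_{ij}\,w$ with $w:=(F-a)\cdot\nu$, so $f_a$ solves
\begin{align*}
\det\!\big(\partial_{ij}f_a-\Gamma^l_{ij}\partial_l f_a-h_{ij}\big)=\det(h)\,K\,w^2 \text{ in }D,\qquad \partial_r f_a=E\,(1-a\cdot F) \text{ on }\partial D,
\end{align*}
where the boundary identity uses $\partial_r F=EF$ and $|F|^2=1$ on $\partial D$. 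Note $\partial_r$ is transverse to $\partial D$, so this is a regular oblique (Neumann-type) boundary condition with $C^{1,\alpha}$ data.

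Before translating, I would record the input: by Lemma \ref{closelem1} and Lemma \ref{clselem2} one already has $|F|_{C^2(\overline D)}\le c$ with $c$ depending only on $|h|_{C^4(\overline D)}$ and $|1/K|_{C^0(\overline D)}$; in particular $\lambda_0\,\mathrm{Id}\le A\le\Lambda_0\,\mathrm{Id}$ and $|\nu|_{C^{0,\alpha}(\overline D)}\le c$ for constants of the same type. The crucial point is then to choose $a$ so that $|w|\ge r_0$ on $\overline D$ for a constant $r_0>0$ of the same type. Granting this, the right-hand side above lies in a fixed compact subinterval of $(0,\infty)$ (the lower bound uses $|1/K|_{C^0}$ and $r_0$, the upper bound uses the $C^2$-bound on $F$ and the $C^4$-bound on $h$), $w$ has constant sign, and for the appropriate sign $\epsilon\in\{+1,-1\}$ the symmetric matrix $\epsilon(\operatorname{Hess}_h f_a-h)=-\epsilon\,wA$ is positive definite; since $\det(\epsilon M)=\det M$ for $2\times 2$ matrices and $M\mapsto\det(M)^{1/2}$ is concave on the positive definite cone, $f_a$ solves (up to sign) a uniformly elliptic, concave, fully non-linear equation with $C^{1,\alpha}$ oblique boundary data, and Krylov's global $C^{2,\alpha}$ boundary estimate (see, e.g., \cite{han2006isometric}) gives $|f_a|_{C^{2,\alpha}(\overline D)}\le c$. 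Since $|w|\ge r_0$ and $w=(F-a)\cdot\nu\in C^{0,\alpha}(\overline D)$, solving $A_{ij}=-w^{-1}(\partial_{ij}f_a-\Gamma^l_{ij}\partial_l f_a-h_{ij})$ upgrades this to $|A|_{C^{0,\alpha}(\overline D)}\le c$, and then $\partial_i\partial_j F=\Gamma^l_{ij}\partial_l F-A_{ij}\nu$ yields the claimed bound $|F|_{C^{2,\alpha}(\overline D)}\le c$.

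It remains to produce $a$ and $r_0$, and this I expect to be the main obstacle, since it is a \emph{global} (not infinitesimal) convexity statement. I would let $\Omega\subset\overline{B^3(0)}$ be the compact region enclosed by $F(\overline D)$ together with the component of $\mathbb S^2\setminus F(\partial D)$ on its far side, and take $a$ to be the incenter of $\Omega$; the convexity of $\Omega$ follows from strict convexity of $F(\overline D)$, the orthogonality of the contact (interior dihedral angle $\pi/2<\pi$ along $F(\partial D)$), and the cone estimate from the proof of Lemma \ref{existlem}. As each tangent plane $T_{F(x)}F(\overline D)$ supports $\Omega$, one gets $|w(x)|=\operatorname{dist}\big(a,T_{F(x)}F(\overline D)\big)\ge r_{\mathrm{in}}(\Omega)$ with $w$ of constant sign, so everything reduces to a uniform lower bound on the inradius $r_{\mathrm{in}}(\Omega)$. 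I would prove this by contradiction: if $r_{\mathrm{in}}(\Omega)\to 0$ along a sequence of admissible data, $\Omega$ would be squeezed into a slab of vanishing width; convexity of $\Omega$ together with the fact that $F(\overline D)$ is a topological disk then force the spherical part of $\partial\Omega$, hence $F(\partial D)$, to have vanishing diameter, so $\operatorname{Length}(F(\partial D))\to 0$; since $F(\overline D)$ is a convex surface in $\overline{B^3(0)}$ with this boundary, $\operatorname{Area}(\overline D,h)=\operatorname{Area}(F(\overline D))\to 0$, contradicting the uniform positive lower bound for $\operatorname{Area}(\overline D,h)$ available along the approximating sequence $h_i\to\tilde h$ (and more generally controlled in terms of $|h|_{C^{4,\alpha}}$ and $|1/K|_{C^0}$ via Gauss--Bonnet). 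Making this last geometric step quantitative, rather than using the slicker but seemingly circular choices $a=0$ or $a$ on the axis of the cone, is where the real work of the proof lies.
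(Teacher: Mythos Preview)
Your overall architecture matches the paper exactly: translate the potential so that the support function $w=(F-a)\cdot\nu$ stays away from zero, then run the Krylov--Evans oblique-derivative estimate on the resulting Monge--Amp\`ere equation, and read off $|A|_{C^{0,\alpha}}$ and hence $|F|_{C^{2,\alpha}}$. Where you and the paper diverge is precisely in the step you flag as the real work: how to choose $a$ and obtain a quantitative lower bound $|w|\ge r_0$ with $r_0$ depending only on the stated data.

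The paper avoids the inradius/compactness route entirely and instead works with the \emph{Gauss map} $\nu:\overline D\to\mathbb S^2$. Since $F$ is strictly convex, $\nu$ is an embedding; one computes that in the pulled-back metric $\hat h=\nu^*g_e$ the geodesic curvature of $\partial D$ equals $1/A_{\varphi\varphi}$ (using $A_{r\varphi}=0$ and $k_h=1$). By Lemma~\ref{clselem2} this is bounded below by a constant $\Xi^{-1}$ depending only on $|h|_{C^4}$ and $|1/K|_{C^0}$. A Crofton/Fenchel-type argument (as in Lemma~\ref{existlem}) then places $\nu(\overline D)$ in an open hemisphere, and comparing $k_{\hat h}$ with the curvature of circles of latitude forces the minimum height $\nu^3$ to satisfy $\nu^3\ge 1/(2\Xi)$ after a rotation. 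One then takes $a=e_3$ (equivalently $\hat F=-F+e_3$): since $F\cdot\nu\le 0$ on $\overline D$, this gives $\hat F\cdot\nu=-F\cdot\nu+\nu^3\ge 1/(2\Xi)$, which is exactly your $r_0$, obtained constructively with the right dependence.

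Your incenter argument is a reasonable alternative idea, but as sketched it has two soft spots. First, small inradius of a convex body does \emph{not} by itself force the body into a thin slab (it could collapse onto a segment), and even in a thin slab the area of $F(\overline D)$ need not vanish without further input; you would need to exploit more carefully that part of $\partial\Omega$ lies on the fixed unit sphere and meets $F(\overline D)$ orthogonally. Second, the uniform positive lower bound on $\operatorname{Area}(\overline D,h)$ in terms of $|h|_{C^{4,\alpha}}$ and $|1/K|_{C^0}$ alone is not obvious from Gauss--Bonnet (which gives an \emph{upper} bound on area from $K\ge K_{\min}$); falling back on the specific sequence $h_i\to\tilde h$ would weaken the lemma's stated dependence, even if it suffices for the closedness theorem. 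The Gauss-map route sidesteps both issues and yields an explicit $r_0$.
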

\begin{proof}
	Let $(\varphi,r)$ be isothermal polar coordinates centred at the origin with conformal factor $E$.  We consider the Gauss map $\nu:\bar{D}\to \mathbb{S}^2\subset\mathbb{R}^3$. Since $F$ is strictly convex,  $\nu$ is a strictly convex embedding, too. Let $\hat h=\nu^*\bar g$ be the pull-back metric of the Euclidean metric $\bar g$. There holds $A_{r\varphi}=0$ on $\partial D$; cf. (\ref{A cond boundary}). This implies that $$\partial_\varphi \nu=E^{-2}A_{\varphi\varphi}\partial_\varphi F, \qquad  \partial_r \nu=E^{-2}A_{rr} \partial_r F,$$ on $\partial D$. It follows that $\hat\mu=E^{-1}\partial_r F=\mu$ is the outward co-normal of $\partial D$ with respect to $\hat h$. Consequently, the geodesic curvature of $\hat h$ along $\partial D$ is given by
	$$
	k_{\hat h}=\frac{E^3}{ A^2_{\varphi\varphi}} \partial_\varphi\partial_\varphi \nu\cdot \partial_r F=\frac{E^2}{A_{\varphi\varphi}} \partial_\varphi\partial_\varphi F\cdot 
	\mu=\frac{1}{A_{\varphi\varphi}},
	$$
	where we used the fact that $k_h=1$. Thus, the previous lemma implies that $$k_{\hat{h}}\geq \eta>0$$ for some constant $\eta>0$ which can be uniformly bounded from below in terms of $|h|_{C^{4}(\bar{ D})}$ and $|1/K|_{C^0(\bar{ D})}$. Arguing as in the proof of Lemma \ref{existlem}, we may assume, after a suitable rotation, that $\nu(\bar{ D})$ is contained in the lower hemisphere $\mathbb{S}^2_-$ and that the function  $\nu\cdot e_3$ attains its  maximum $s<0$ in at least two points $p_1,p_2\in\nu(\partial D)$. It then follows that there has to be another point $p\in\nu(\partial D)$ where $k_{\hat h}\leq k_{s}=-s\sqrt{1-s^2}$.  Here, $k_{s}$ is the curvature of the curve $\{ p\in\mathbb{S}^2:p\cdot e_3=s\}$. Consequently\footnote{A similar result is also due to W. Fenchel, see \cite{fenchel1929krummung}.}, $$\nu\cdot e_3\leq  s\leq- \frac12 \eta.$$ As we have seen in the proof of Lemma (\ref{existlem}), $F\cdot \nu$  vanishes precisely at the boundary. This implies that  $ F\cdot \nu\leq 0$ 
	and it follows that $\hat F\cdot \nu\leq s<0$, where $\hat F=F+e_3$. Clearly, $\hat F$ satisfies estimates comparable  to $F$. We define the function $\hat{f}=\frac12 \hat F\cdot \hat F$ and obtain
	\begin{align}
	&\det(\partial_{ij}\hat f-\Gamma^m_{ij}\partial_m\hat f-h_{ij})=\det hK(\hat F\cdot \nu)^2
	\text{ on }D, \\
	&\partial_\mu \hat f = 1-F\cdot e_3 \text{ on }\partial D.
	\end{align}
	Since $\hat F\cdot \nu\leq s<0$, the equation is uniformly elliptic and the ellipticity constant can be estimated in terms of $|h|_{C^{4}(\bar{ D})}$ and $|1/K|_{C^0(\bar{ D})}$. Now, the Krylov-Evans type estimate \cite[Theorem 6]{trudinger1984boundary} implies
	$$
	|\hat f|_{C^{2,\alpha}(\bar{D})}\leq  c,
	$$
	where $c$ depends on $\alpha$, $|h|_{C^{4,\alpha}(\bar{ D})}$, the ellipticity constant, $|f|_{C^2(\bar{ D})}$ and the boundary data. All these terms can be estimated in terms of $|h|_{C^{4}(\bar{ D})}$ and $|1/K|_{C^0(\bar{ D})}$.
\end{proof}
We  now prove the main result of this section.
\begin{prop}
	Let $\{h_l\}_{l=1}^\infty$ be a sequence of Riemannian metrics $h_l\in \mathcal{G}_*^{k,\alpha}$ converging in $C^{4}$ to a Riemannian metric $\tilde h\in \mathcal{G}^{k,\alpha}$, where $k\geq 4$. Then $\tilde h\in \mathcal{G}_*^{k,\alpha}$.
	\label{closeness}
\end{prop}
\begin{proof}
	Let $K_l$ denote the curvature of the metric $h_l$. The convergence implies that there is a number $\Lambda$ such that
	$ |h_l|_{C^{4}(\bar{ D})}$, $|1/K_l|_{C^0(\bar{ D})}\leq\Lambda$ for all $l\in\mathbb{N}$.	Lemma \ref{close3est} then implies the uniform estimate
	$$
	|F_l|_{C^{2,\alpha}(\bar{ D})}\leq c,
	$$
	where $F_l$ are the respective solutions of (\ref{fbp}). According to the Arzela-Ascoli theorem, we can extract a subsequence converging in $C^2(\bar{D})$ to a map $F\in C^2(\bar{D})$ which is a solution of the free boundary problem (\ref{fbp}) with respect to $\tilde h$. Standard elliptic theory implies the claimed regularity.
\end{proof}
\section{Proof of Theorem \ref{main theorem fbie}}
\subsection{Existence.} It suffices to show that $\mathcal{G}_*^{k,\alpha}=\mathcal{G}^{k,\alpha}$.  According to Lemma \ref{pathconnect}, ${\mathcal{G}}^{k,\alpha}$ is path-connected while  Proposition \ref{openness} and Proposition \ref{closeness} imply that $\mathcal{G}_*^{k,\alpha}$ is open and closed. We define the map 
	$$
	F_0:[0,4\pi]\times [0,2\pi]\qquad \text{given by } \qquad F(\theta,\varphi)=(\sin\theta\sin\varphi,\sin\theta\cos\varphi,\sqrt{2}-\cos\theta).
	$$	
	The image of $F_0$ has positive curvature, geodesic curvature along the boundary equal to $1$ and meets the unit sphere orthogonally. Consequently, $\mathcal{G}_*^{k,\alpha}$ is a non-empty, open and closed subset of a path-connected space which  implies that $\mathcal{G}_*^{k,\alpha}=\mathcal{G}^{k,\alpha}$.
	\subsection{Uniqueness.}
	Suppose that $\tilde F$ is another solution of (\ref{fbp}). We denote the Gauss curvature of $h$ by $K$, and the second fundamental forms of $F$ and $\tilde F$ by $A=(A_{ij})_{ij}$ and $\tilde A=(\tilde A_{ij})_{ij}$, respectively. The respective normals $\nu$ and $\tilde \nu$ are chosen in a way such that the respective mean curvatures, denoted by $H$ and $\tilde H$, share the same sign. We now use a variation of the argument in \cite{pigola2003some}. \\ \indent  After a rotation and reflection, we may assume that both $F$ and $\tilde F$ are contained in the open upper hemisphere, that $\nu$ and $\tilde \nu$ both point downwards and that $F\cdot \nu$ and $\tilde F \cdot \tilde \nu$ vanish precisely at the boundary. Next, we choose a local orthonormal frame $e_1,e_2$ with respect to $h$ and  define the following two vector fields
	$$
	X=\bigg(\tilde A_{ij}F\cdot dF(e_j) -\tilde HF\cdot dF(e_i)\bigg)e_i, \qquad
	Y=\bigg(A_{ij}F\cdot dF(e_j)-HF\cdot dF(e_i)\bigg)e_i.
	$$
	It can be checked that these definitions do not depend on the choice of the orthonormal frame. Using the conformal property of the position vector field in $\mathbb{R}^3$, one computes as in  \cite[Proposition 1.2 and Proposition 1.8]{pigola2003some}) that
	\begin{align}
	\operatorname{div}_h X&=-\tilde H-2KF\cdot \nu+F\cdot \nu \det(A-\tilde A), \label{1div}
	\\\operatorname{div}_h Y&=-H-2KF\cdot \nu . \label{2div}
	\end{align}
	Let $(\varphi,r)$ denote isothermal polar coordinates, $\mu$ the outward co-normal of the Riemannian manifold $(D,h)$ and $E$ the conformal factor of $h$. Using the free boundary condition and $A_{r\varphi}=\tilde A_{r\varphi}=0$, we find
	\begin{align*}
	X\cdot \mu=-E^{-2}\tilde A_{\varphi\varphi} \qquad 
	Y\cdot \mu =-E^{-2} A_{\varphi\varphi}
	\end{align*}
	on $\partial D$.
	Hence, integrating (\ref{1div}) and (\ref{2div}) over $M$, applying the divergence theorem and subtracting both equations we find that
	$$
	\int_{\partial D} E^{-2} (A_{\varphi\varphi}-\tilde A_{\varphi\varphi})\,\text{d}vol_h= \int_D (H-\tilde H)\, \text{d}vol_h +\int_D F\cdot \nu\det(A-\tilde A)\,\text{d}vol_h.
	$$
	Interchanging the roles of $F$ and $\tilde F$ and performing the same computation again, we conclude
	$$
	\int_D (F\cdot \nu+\tilde F \cdot \tilde \nu) \det(A-\tilde A)\text{d}vol_h=0.
	$$
	$F\cdot \nu,$ $\tilde F \cdot \tilde \nu$ are both positive on $D$. Conversely, $A,\tilde A>0$ and 
	$$
	\det(A)=\det(\tilde A)=\det(h)K
	$$
	imply that $\det(A-\tilde A)\geq 0$. It follows that
	$$
	\det(A-\tilde A)=0
	$$
	and thus
	$$
	A=\tilde A.
	$$
	Hence, $F$ and $\tilde F$ share the same second fundamental form and consequently only differ by a rigid motion. Since both of their boundaries are contained in the unit sphere, this rigid motion must be a composition of a rotation and a reflection through a plane containing the origin.

\appendix
\section{The Lopatinski-Shapiro condition}
In this section, we verify the Lopatinski-Shapiro condition. We refer to \cite{wendlandelliptic} for its definition.
\begin{lem}
	$\mathcal L$ satisfies the Lopatinski-Shapiro condition.
\end{lem}
\begin{proof}
	We interpret $L$ as an operator from $\mathcal{C}^{1,\alpha}$ to $\mathcal{C}^{0,\alpha}$ and let $\partial_1, \partial_2$ be any local coordinate system on $D$. We compute  $L(u)=\mathcal{M}_1\partial_1 u +\mathcal{M}_2\partial_2(u)+\mathcal{M}_3u$, where
	\begin{align*}
	\mathcal{M}_1=\begin{pmatrix}
	1-\frac12A^{11}A_{11} & -\frac12 A^{12}A_{11}\\
	-\frac12 A^{11}A_{12} & 1 - \frac12 A^{12}A_{12}
	\end{pmatrix}
	\qquad  \mathcal{M}_2=\begin{pmatrix}
	-\frac12 A^{12}A_{11} & -\frac12 A^{22}A_{11}\\
	1-\frac12 A^{12}A_{12} &  - \frac12 A^{22}A_{12}
	\end{pmatrix}
	\end{align*}	
	and $\mathcal{M}_3$ is a matrix which we do not need to determine. Calculating the inverse of $A$ we find 
	\begin{align*}
	\mathcal{M}_1&=\begin{pmatrix}
	1-(2\det(A))^{-1}A_{11}A_{22} & (2\det(A))^{-1} A_{12}A_{11}\\
	-(2\det(A))^{-1} A_{22}A_{12} & 1 + (2\det(A))^{-1} A_{12}A_{12}
	\end{pmatrix}, \\
	\qquad \mathcal{M}_2&=\begin{pmatrix}
	(2\det(A))^{-1}A_{12}A_{11} & -(2\det(A))^{-1} A_{11}A_{11}\\
	1+(2\det(A))^{-1}A_{12}A_{12} &  - (2\det(A))^{-1}A_{11}A_{12}
	\end{pmatrix}.
	\end{align*}	
	Let $p\in\partial D$ and choose polar coordinates $(\varphi,r)$ centred at the origin with $\partial_1=\partial_\varphi$ and $\partial_2=\partial_r$. The free boundary condition implies $A_{12}=0$, see (\ref{A cond boundary}). Furthermore, there holds $\det(A)=A_{11}A_{22}$. Consequently,
	\begin{align*}
	\mathcal{M}=\mathcal{M}_1^{-1}\mathcal M_2=\begin{pmatrix}
	0 &-\psi^2 \\
	1 & 0
	\end{pmatrix}
	\end{align*}
	where $\psi^2=\frac{A_{11}}{A_{22}}$. The eigenvalues of this matrix are given by $\pm i\psi$. In the chosen coordinate chart, the boundary operator $R_1$ is given by  $$R_1=\begin{pmatrix}0\hspace{0.3cm} & 1\end{pmatrix}.$$ According to \cite[16.1]{wloka1995boundary}, the Lopatinski condition is satisfied if the matrix
	\begin{align*}
	R_1 \int_{\gamma}(\xi \operatorname{Id}-\mathcal{M})^{-1}\text{d}\xi
	\end{align*}
	has rank $1$ for every closed path $\gamma$ in the upper half plane containing $i\psi$.
	We compute
	\begin{align*}
	(\xi \operatorname{Id}-\mathcal M)=\frac{1}{(\xi-i\psi)(\xi+i\psi)}\begin{pmatrix} \xi & \psi^2 \\  -1 & \xi \end{pmatrix}.
	\end{align*}
	The residue theorem implies
	\begin{align*}
	R_1 \int_{\gamma}(\xi \operatorname{Id}-\mathcal{M})^{-1}\text{d}\xi=\pi \begin{pmatrix}
	-\psi^{-1} \hspace{0.3cm}& i
	\end{pmatrix}.
	\end{align*}
	\label{lopatinski condition}
\end{proof}

\section{Explicit examples in the Schwarzschild space}
\label{schwarzschild}
In this section, we consider explicit free boundary surfaces supported on the spheres of symmetry in the Schwarzschild space and provide some numerical evidence for the validity of Conjecture \ref{conjecture}. 
\\ \indent On $\mathbb{R}^3\setminus\{0\}$, we consider the Schwarzschild metric $g_m$ with mass $m>0$ defined by
$$
g_m=\bigg(1+\frac{m}{2|x|}\bigg)^4\bar g=\phi^4_m(|x|)\bar g,
$$
where $\bar g$ denotes the Euclidean metric and $\phi_m$ the conformal factor of $g_m$. The continuous function
$$
\psi:[m/2,\infty)\to[0,\infty),\qquad \lambda \mapsto H(S_\lambda(0))=2 \phi_m^{-2}(\lambda)\lambda^{-1}-2m\phi_m^{-3}(\lambda) \lambda^{-2}.
$$
vanishes at $\lambda=m/2$ and approaches $0$ as $\lambda \to\infty$.  A lengthy but straightforward calculation shows that
$$
\operatorname{max}\psi_m > 2 \qquad  \text{ if and only if }\qquad m<{3^{-\frac32}}.
$$
Let $m\in(0,3^{-\frac32})$ and $\lambda_m>m/2$ be maximal such that
$$
H(S_{\lambda_m}(0))=2.
$$

\begin{figure}
	\includegraphics[width=0.5\linewidth]{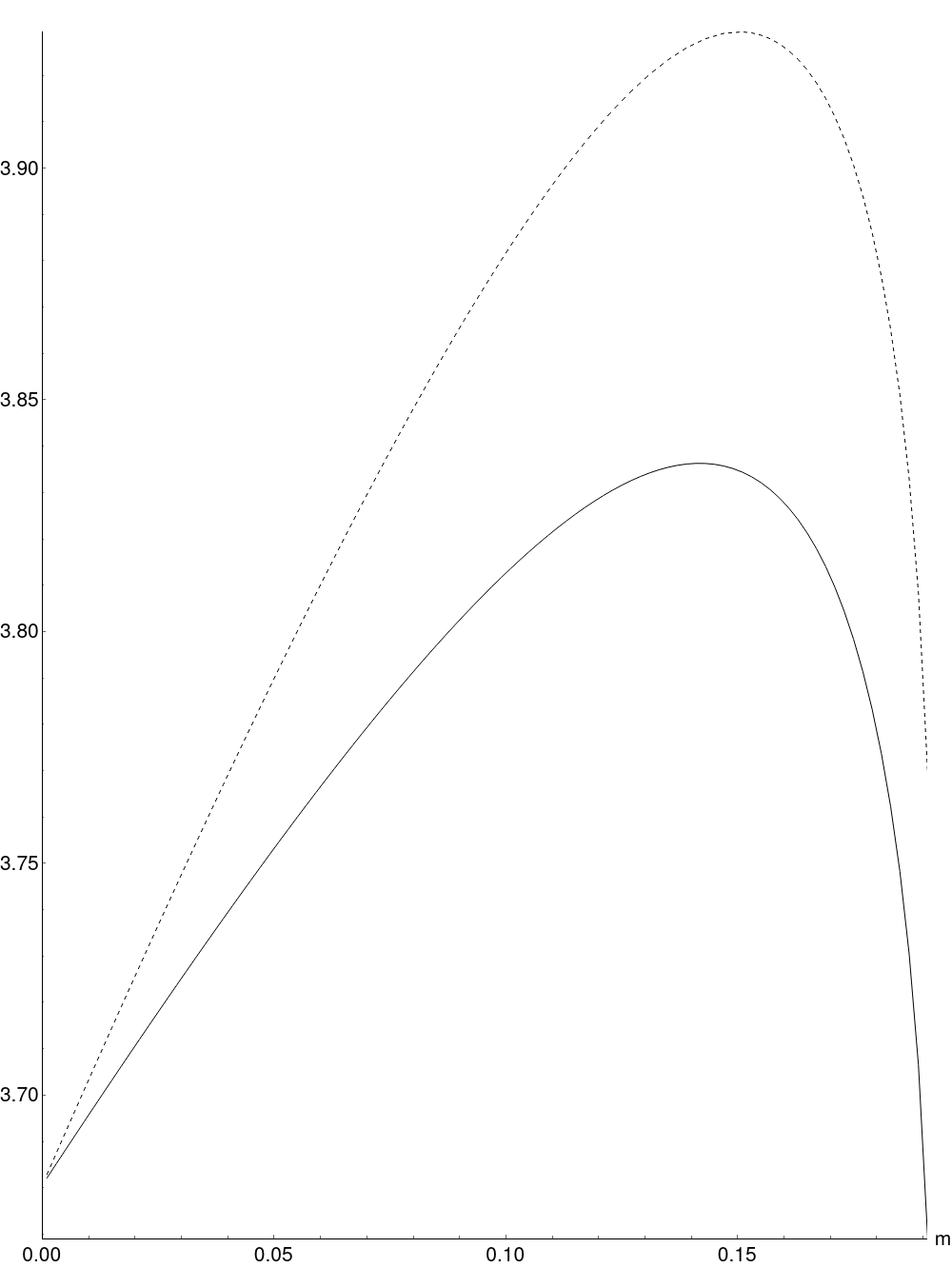}
	\label{figure1}
	\caption{A plot of the total mean curvature of the free boundary surface $\Sigma$ with respect to the Euclidean background metric and the Schwarzschild background metric for different values of $m$. The dotted line, corresponding to the total Euclidean mean curvature, lies above the solid line, corresponding to the Schwarzschild background metric.}
\end{figure}
We consider the embedded disc $\Sigma=\operatorname{Im}(\Phi)$, where $$\Phi:[0,\pi/4]\times[0,2\pi)\to \mathbb{R}^3\setminus\{0\} \qquad (\theta,\varphi)\mapsto \lambda_m(\sin\theta\sin\varphi,\sin\theta\cos\varphi,\sqrt{2}-\cos\theta).$$
Let $h$ be the metric induced by the Schwarzschild background metric. One may check  that the geodesic curvature of $\partial \Sigma$ satisfies
$$
k_h=\frac12 H(S_{\lambda_m}(0))=1.
$$
Similarly, the Gauss equation shows that
$$
K_h=\frac12 H^2-\operatorname{Rc}(\nu,\nu)>0.
$$
Here, we recall that 
$$
H=2\phi_m^{-2}(|x|)\lambda_m^{-1}-2m\phi_m^{-1}|x|^{-3}x\cdot \nu, \qquad  
\operatorname{Rc}(\nu,\nu)=2m \phi_m^{-6}(|x|) |x|^{-3}(1-3\phi_m^4(|x|) |x|^{-2} (x\cdot  \nu)^2 ).
$$
Let $M$ be the closure of the component of $B_{\lambda_m}(0)\setminus \Sigma$ with less volume. Since the scalar curvature of the Schwarzschild manifold vanishes,  $M$ satisfies the assumptions of Conjecture \ref{conjecture}.
\\ \indent In order to check if Conjecture \ref{conjecture} holds up to these examples, we explicitly solve the isometric embedding problem. To this end, we first observe that
$$
|\Phi|^2=\lambda_m^2(3-2\sqrt{2}\cos\theta).
$$
Consequently, there exists a function $\eta:[0,\pi/4]\to (0,\infty)$ such that
$$
\eta(\theta)=\phi_m(|\Phi(\theta,\varphi)|)
$$
for every $\varphi\in(0,2\pi)$. Moreover, we notice that
$$
h=\lambda_m^2\eta^4(\theta)(\sin^2(\theta)\text{d}\varphi^2+\text{d}\theta^2).
$$
We then consider the map
$$\Psi:[0,\pi/4]\times[0,2\pi)\to \mathbb{R}^3, \qquad (\theta,\varphi)\mapsto \lambda_m(y(\theta)\sin\varphi,y(\theta)\cos\varphi,z(\theta)).$$
The isometric embedding equation becomes
\begin{equation}
\begin{aligned}
&y(\theta)=\eta^2(\theta)\sin\theta, \\
&(y'(\theta))^2+(z'(\theta)^2)=\eta^4(\theta).
\end{aligned}
\end{equation}
It can be checked that, as predicted by Theorem \ref{main theorem fbie}, the solution to this system can be chosen such that $\Psi(\Sigma)$ meets $\mathbb{S}^2$ orthogonally along $\Psi(\partial \Sigma)$. Let $\bar \nu$ and $\bar H$ be the normal and mean curvature of $\Psi(\Sigma)\subset \mathbb{R}^3$. A direct computation gives
$$
\bar\nu =(y'(\theta))^2+(z'(\theta)^2)^{-\frac12}(z'(\theta)\sin\phi,z'(\theta)\cos\phi,-y'(\theta)),\qquad \text{d}vol_h=\lambda_m^2 y(\theta)\sqrt{y'(\theta))^2+(z'(\theta)^2},
$$
as well as
$$
\bar H= \lambda_m^{-1}(y'(\theta)^2+z'(\theta)^2)^{-\frac32}\big[z''(\theta)y'(\theta)-y''(\theta)z'(\theta)\big]+\lambda_m^{-1}(y'(\theta)^2+z'(\theta)^2)^{-\frac12}y(\theta)^{-1}z'(\theta).
$$
Conversely, we have
$$
H=2\lambda_m^{-1}\eta^{-2}(\theta)+2m\lambda_m^{-2}\eta^{-3}(\theta)(3-2\sqrt2\cos\theta)^{-\frac32}(\sqrt{2}\cos\theta-1).
$$
A numerical computation for several sample values of $m$ suggests that
$$
\int_{\Sigma}\bar H\,\text{d}vol_h>\int_\Sigma H\,\text{d}vol_h
$$
for every $m\in(0,3^{-3/2})$, see Figure 1.


\begin{thebibliography}{WRL95}
	
	\bibitem[BY92]{brown1992quasilocal}
	J.~David Brown and James~W. York, Jr.
	\newblock Quasilocal energy in general relativity.
	\newblock In {\em Mathematical aspects of classical field theory ({S}eattle,
		{WA}, 1991)}, volume 132 of {\em Contemp. Math.}, pages 129--142. Amer. Math.
	Soc., Providence, RI, 1992.
	
	\bibitem[CV27]{cohn1927zwei}
	Stephan Cohn-Vossen.
	\newblock {Z}wei {S}{\"a}tze {\"u}ber die {S}tarrheit der {E}ifl{\"a}chen.
	\newblock {\em Nachrichten G{\"o}ttingen}, 1927:125--137, 1927.
	
	\bibitem[DK81]{deturck1981some}
	Dennis~M. DeTurck and Jerry~L. Kazdan.
	\newblock Some regularity theorems in {R}iemannian geometry.
	\newblock {\em Ann. Sci. \'{E}cole Norm. Sup. (4)}, 14(3):249--260, 1981.
	
	\bibitem[Fen29]{fenchel1929krummung}
	Werner Fenchel.
	\newblock \"{U}ber {K}r\"{u}mmung und {W}indung geschlossener {R}aumkurven.
	\newblock {\em Math. Ann.}, 101(1):238--252, 1929.
	
	\bibitem[FS11]{fraser2011first}
	Ailana Fraser and Richard Schoen.
	\newblock The first {S}teklov eigenvalue, conformal geometry, and minimal
	surfaces.
	\newblock {\em Adv. Math.}, 226(5):4011--4030, 2011.
	
	\bibitem[FS13]{fraser2012minimal}
	Ailana Fraser and Richard Schoen.
	\newblock Minimal surfaces and eigenvalue problems.
	\newblock In {\em Geometric analysis, mathematical relativity, and nonlinear
		partial differential equations}, volume 599 of {\em Contemp. Math.}, pages
	105--121. Amer. Math. Soc., Providence, RI, 2013.
	
	\bibitem[FS16]{fraser2016sharp}
	Ailana Fraser and Richard Schoen.
	\newblock Sharp eigenvalue bounds and minimal surfaces in the ball.
	\newblock {\em Invent. Math.}, 203(3):823--890, 2016.
	
	\bibitem[GL94]{guan1994weyl}
	Pengfei Guan and Yan~Yan Li.
	\newblock The {W}eyl problem with nonnegative {G}auss curvature.
	\newblock {\em J. Differential Geom.}, 39(2):331--342, 1994.
	
	\bibitem[GL17]{guan2017curvature}
	Pengfei Guan and Siyuan Lu.
	\newblock Curvature estimates for immersed hypersurfaces in {R}iemannian
	manifolds.
	\newblock {\em Invent. Math.}, 208(1):191--215, 2017.
	
	\bibitem[Gua07]{guan2007isometric}
	Bo~Guan.
	\newblock Isometric embedding of negatively curved disks in the {M}inkowski
	space.
	\newblock {\em Pure Appl. Math. Q.}, 3(3, Special Issue: In honor of Leon
	Simon. Part 2):827--840, 2007.
	
	\bibitem[G{\"u}n91]{gunther1991isometric}
	Matthias G{\"u}nther.
	\newblock Isometric embeddings of {R}iemannian manifolds.
	\newblock In {\em Proceedings of the {I}nternational {C}ongress of
		{M}athematicians, {V}ol. {I}, {II} ({K}yoto, 1990)}, pages 1137--1143. Math.
	Soc. Japan, Tokyo, 1991.
	
	\bibitem[Hei59]{heinz1959elliptic}
	Erhard Heinz.
	\newblock On elliptic {M}onge-{A}mp\`ere equations and {W}eyl's embedding
	problem.
	\newblock {\em J. Analyse Math.}, 7:1--52, 1959.
	
	\bibitem[Hei62]{heinz1962weyl}
	Erhard Heinz.
	\newblock On {W}eyl's embedding problem.
	\newblock {\em J. Math. Mech.}, 11:421--454, 1962.
	
	\bibitem[HH06]{han2006isometric}
	Qing Han and Jia-Xing Hong.
	\newblock {\em Isometric embedding of {R}iemannian manifolds in {E}uclidean
		spaces}, volume 130 of {\em Mathematical Surveys and Monographs}.
	\newblock American Mathematical Society, Providence, RI, 2006.
	
	\bibitem[HM14]{hijazi2014holographic}
	Oussama Hijazi and Sebasti\'{a}n Montiel.
	\newblock A holographic principle for the existence of parallel spinor fields
	and an inequality of {S}hi-{T}am type.
	\newblock {\em Asian J. Math.}, 18(3):489--506, 2014.
	
	\bibitem[Hon99]{hong1999darboux}
	Jiaxing Hong.
	\newblock Darboux equations and isometric embedding of {R}iemannian manifolds
	with nonnegative curvature in {${\bf R}^3$}.
	\newblock {\em Chinese Ann. Math. Ser. B}, 20(2):123--136, 1999.
	\newblock A Chinese summary appears in Chinese Ann. Math. Ser. A {{\bf{2}}0}
	(1999), no. 2, 267.
	
	\bibitem[HZ95]{hong1995isometric}
	J.~Hong and C.~Zuily.
	\newblock Isometric embedding of the {$2$}-sphere with nonnegative curvature in
	{${\bf R}^3$}.
	\newblock {\em Math. Z.}, 219(3):323--334, 1995.
	
	\bibitem[Lew38]{lewy1938existence}
	Hans Lewy.
	\newblock On the existence of a closed convex surface realizing a given
	{R}iemannian metric.
	\newblock {\em Proceedings of the {N}ational {A}cademy of {S}ciences of the
		{U}nited {S}tates of {A}merica}, 24(2):104, 1938.
	
	\bibitem[LM19]{lu2017minimal}
	Siyuan Lu and Pengzi Miao.
	\newblock Minimal hypersurfaces and boundary behavior of compact manifolds with
	nonnegative scalar curvature.
	\newblock {\em J. Differential Geom.}, 113(3):519--566, 2019.
	
	\bibitem[LS16]{lambert2016inverse}
	Ben Lambert and Julian Scheuer.
	\newblock The inverse mean curvature flow perpendicular to the sphere.
	\newblock {\em Math. Ann.}, 364(3-4):1069--1093, 2016.
	
	\bibitem[LS17]{lambert2017geometric}
	Ben Lambert and Julian Scheuer.
	\newblock A geometric inequality for convex free boundary hypersurfaces in the
	unit ball.
	\newblock {\em Proc. Amer. Math. Soc.}, 145(9):4009--4020, 2017.
	
	\bibitem[Lu20]{lu2016weyl}
	Siyuan Lu.
	\newblock On {W}eyl's embedding problem in {R}iemannian manifolds.
	\newblock {\em Int. Math. Res. Not. IMRN}, (11):3229--3259, 2020.
	
	\bibitem[LW20]{li2016weyl}
	Chunhe Li and Zhizhang Wang.
	\newblock The {W}eyl problem in warped product spaces.
	\newblock {\em J. Differential Geom.}, 114(2):243--304, 2020.
	
	\bibitem[LY03]{liu2003positivity}
	Chiu-Chu~Melissa Liu and Shing-Tung Yau.
	\newblock Positivity of quasilocal mass.
	\newblock {\em Phys. Rev. Lett.}, 90(23):231102, 4, 2003.
	
	\bibitem[LY06]{liu2006positivity}
	Chiu-Chu~Melissa Liu and Shing-Tung Yau.
	\newblock Positivity of quasi-local mass. {II}.
	\newblock {\em J. Amer. Math. Soc.}, 19(1):181--204, 2006.
	
	\bibitem[Mia02]{MR1982695}
	Pengzi Miao.
	\newblock Positive mass theorem on manifolds admitting corners along a
	hypersurface.
	\newblock {\em Adv. Theor. Math. Phys.}, 6(6):1163--1182 (2003), 2002.
	
	\bibitem[Nas56]{nash1956imbedding}
	John Nash.
	\newblock The imbedding problem for {R}iemannian manifolds.
	\newblock {\em Ann. of Math. (2)}, 63:20--63, 1956.
	
	\bibitem[Nir53]{nirenberg1953weyl}
	Louis Nirenberg.
	\newblock The {W}eyl and {M}inkowski problems in differential geometry in the
	large.
	\newblock {\em Comm. Pure Appl. Math.}, 6:337--394, 1953.
	
	\bibitem[Nit85]{nitsche1985stationary}
	Johannes C.~C. Nitsche.
	\newblock Stationary partitioning of convex bodies.
	\newblock {\em Arch. Rational Mech. Anal.}, 89(1):1--19, 1985.
	
	\bibitem[Pog73]{pogorelov1973extrinsic}
	A.~V. Pogorelov.
	\newblock {\em Extrinsic geometry of convex surfaces}.
	\newblock American Mathematical Society, Providence, R.I., 1973.
	\newblock Translated from the Russian by Israel Program for Scientific
	Translations, Translations of Mathematical Monographs, Vol. 35.
	
	\bibitem[PRS03]{pigola2003some}
	Stefano Pigola, Marco Rigoli, and Alberto~G. Setti.
	\newblock Some applications of integral formulas in {R}iemannian geometry and
	{PDE}'s.
	\newblock {\em Milan J. Math.}, 71:219--281, 2003.
	
	\bibitem[Sch90]{schulz2006regularity}
	Friedmar Schulz.
	\newblock {\em Regularity theory for quasilinear elliptic systems and
		{M}onge-{A}mp\`ere equations in two dimensions}, volume 1445 of {\em Lecture
		Notes in Mathematics}.
	\newblock Springer-Verlag, Berlin, 1990.
	
	\bibitem[ST02]{shi2002positive}
	Yuguang Shi and Luen-Fai Tam.
	\newblock Positive mass theorem and the boundary behaviors of compact manifolds
	with nonnegative scalar curvature.
	\newblock {\em J. Differential Geom.}, 62(1):79--125, 2002.
	
	\bibitem[Sta15]{stanley2015catalan}
	Richard~P. Stanley.
	\newblock {\em Catalan numbers}.
	\newblock Cambridge University Press, New York, 2015.
	
	\bibitem[SWX18]{scheuer2018alexandrov}
	Julian Scheuer, Guofang Wang, and Chao Xia.
	\newblock Alexandrov-{F}enchel inequalities for convex hypersurfaces with free
	boundary in a ball.
	\newblock {\em arXiv preprint arXiv:1811.05776}, 2018.
	
	\bibitem[Tru84]{trudinger1984boundary}
	Neil~S. Trudinger.
	\newblock Boundary value problems for fully nonlinear elliptic equations.
	\newblock In {\em Miniconference on nonlinear analysis ({C}anberra, 1984)},
	volume~8 of {\em Proc. Centre Math. Anal. Austral. Nat. Univ.}, pages 65--83.
	Austral. Nat. Univ., Canberra, 1984.
	
	\bibitem[Vol16]{volkmann2014monotonicity}
	Alexander Volkmann.
	\newblock A monotonicity formula for free boundary surfaces with respect to the
	unit ball.
	\newblock {\em Comm. Anal. Geom.}, 24(1):195--221, 2016.
	
	\bibitem[Wen79]{wendlandelliptic}
	W.~L. Wendland.
	\newblock {\em Elliptic systems in the plane}, volume~3 of {\em Monographs and
		Studies in Mathematics}.
	\newblock Pitman (Advanced Publishing Program), Boston, Mass.-London, 1979.
	
	\bibitem[WRL95]{wloka1995boundary}
	J.~T. Wloka, B.~Rowley, and B.~Lawruk.
	\newblock {\em Boundary value problems for elliptic systems}.
	\newblock Cambridge University Press, Cambridge, 1995.
	
	\bibitem[WX19]{wang2019uniqueness}
	Guofang Wang and Chao Xia.
	\newblock Uniqueness of stable capillary hypersurfaces in a ball.
	\newblock {\em Math. Ann.}, 374(3-4):1845--1882, 2019.
	
	\bibitem[WY07]{wang2006generalization}
	Mu-Tao Wang and Shing-Tung Yau.
	\newblock A generalization of {L}iu-{Y}au's quasi-local mass.
	\newblock {\em Comm. Anal. Geom.}, 15(2):249--282, 2007.
	
	\bibitem[WY09]{wang2009isometric}
	Mu-Tao Wang and Shing-Tung Yau.
	\newblock Isometric embeddings into the {M}inkowski space and new quasi-local
	mass.
	\newblock {\em Comm. Math. Phys.}, 288(3):919--942, 2009.
	
\end{thebibliography}
\end{document}